\DeclareMathAlphabet{\mathpzc}{OT1}{pzc}{m}{it}
\author{Hung Yean Loke}
\address{National University of Singapore, Science Drive 2, Singapore 117543}
\email{matlhy@nus.edu.sg}
\author{Tomasz Przebinda}
\address{Department of Mathematics, University of Oklahoma, Norman, OK 73019, USA}
\email{tprzebinda@ou.edu}
\title[A Cauchy - Harish-Chandra integral for a  dual  pair over a p-adic field]{A Cauchy - Harish-Chandra integral for a  dual  pair over a p-adic field, the definition and a conjecture}
\def\Chc{\mathop{\hbox{\rm Chc}}\nolimits}
\def\chc{\mathop{\hbox{\rm chc}}\nolimits}
\def\n{\mathfrak n}
\def\g{\mathfrak g}
\def\h{\mathfrak h}
\def\sp{\mathfrak {sp}}
\def\sl{\mathfrak {sl}}
\def\gl{\mathfrak {gl}}
\def\Df{\mathbb{D}}
\def\F{\mathbb{F}}
\def\Ef{\mathbb{E}}
\def\Qp{\mathbb{Q}_p}
\def\fo{\mathfrak o}
\def\cF{\mathcal{F}}
\def\Df{\Bbb{D}}
\def\C{\mathbb{C}}
\def\Or{\mathcal{O}}
\def\a{\mathfrak a}
\def\l{\mathfrak l}
\def\ss1{\mathfrak s_{\overline 1}}
\def\hs1{\mathfrak h_{\overline 1}}
\def\Pg{\mathrm{P}}
\def\inv{\mathrm{inv}}
\def\Jac{\mathrm{Jac}}
\def\supp{\mathrm{supp}}
\def\Ker{\mathrm{Ker}}
\def\G{\mathrm{G}}
\def\N{\mathrm{N}}
\def\K{\mathrm{K}}
\def\H{\mathrm{H}}
\def\L{\mathrm{L}}
\def\Bbb{\mathbb}
\def\N{\mathrm{N}}
\def\A{\mathrm{A}}
\def\H{\mathrm{H}}
\def\T{\mathrm{T}}
\def\GL{\mathrm{GL}}
\def\SL{\mathrm{SL}}
\def\Sp{\mathrm{Sp}}
\def\Og{\mathrm{O}}
\def\Ug{\mathrm{U}}
\def\rAC{\mathrm{AC}}
\def\t{\tilde}
\def\wt{\widetilde}
\newcommand{\reg}[1]{ {#1}^{\mathrm{reg}}}
\def\W{\mathsf{W}}
\def\Uv{\mathrm{U}}
\def\V{\mathsf{V}}
\def\X{\mathsf{X}}
\def\Y{\mathsf{Y}}
\def\End{\mathop{\hbox{\rm End}}\nolimits}
\def\det{\mathop{\hbox{\rm det}}\nolimits}
\def\ad{\mathop{\hbox{\rm ad}}\nolimits}
\def\Ad{\mathop{\hbox{\rm Ad}}\nolimits}
\def\Hom{\mathop{\hbox{\rm Hom}}\nolimits}
\def\Im{\mathop{\hbox{\rm Im}}\nolimits}
\def\tr{\mathop{\hbox{\rm tr}}\nolimits}
\def\WF{\mathop{\hbox{\rm WF}}\nolimits}
\def\lim{\mathop{\hbox{\rm lim}}\nolimits}
\def\Ind{\mathop{\hbox{\rm Ind}}\nolimits}
\def\ker{\mathop{\hbox{\rm ker}}\nolimits}
\def\Z{\mathcal{Z}}
\def\Ss{\mathcal{S}}
\def\Ee{\mathcal{E}}
\def\fontindex{\arabic}
\def\fonttitre{\textsf}
\newcounter{thh}
\newtheorem{thm}[thh]{\fonttitre{Theorem}}
\newtheorem{pro}[thh]{\fonttitre{Proposition}}
\newtheorem*{pro*}{\fonttitre{Proposition}}
\newtheorem{cor}[thh]{\fonttitre{Corollary}}
\newtheorem*{coro*}{\fonttitre{Corollary}}
\newtheorem{lem}[thh]{\fonttitre{Lemma}}
\newtheorem{defi}[thh]{\fonttitre{Definition}}
\newtheorem*{defi*}{\fonttitre{Definition}}
\newtheorem{conj}{\fonttitre{Conjecture}}
\newtheorem*{nota*}{\fonttitre{Notation}}
\theoremstyle{remark}
\newtheorem{rem}{\fonttitre{Remark}}
\newenvironment{prf}{\begin{proof}}{\end{proof}}
\def\muet{ \ifthenelse{\equal{a}{b}}}
\def\nn{\nonumber}
\def\Z'{\Bbb{Z}'}
\def\biblio{\sloppy
\bibliographystyle{alpha}
\bibliography{article}}
\begin{document}
\thanks{The first author is grateful to the University of Oklahoma for hospitality and financial support in February 2017. The second author gratefully acknowledges hospitality and financial support from the Institute of Mathematical Sciences at the National University of Singapore and the National Science Foundation under Grant DMS-2225892. }

\date{}
\subjclass[2010]{Primary: 22E46; secondary: 22E47} 
\keywords{Howe's correspondence, reductive dual pairs over non archimedean local fields, characters.}

\date{\today}
\maketitle
For a real irreducible dual pair there is  an integral kernel operator 
which maps the distribution character  of an irreducible admissible representation  of the group with the smaller or equal rank to an invariant 
eigendistribution  on the group with the larger or equal rank. 

The purpose of this article is to transfer this construction to the p-adic case.
We provide the precise definition of the integral kernel operator and formulate a conjecture. 
\tableofcontents

\def\v0{v_0}
\def\u0{u_0}
\def\sm{\mathrm {sum}}

\section{\bf Introduction}

For a real irreducible dual pair $(\G,\G')$ with the rank of $\G'$ less or equal to the rank of $\G$ \cite{PrzebindaCauchy} provides an integral kernel operator $\Chc$ which maps the distribution character $\Theta_{\Pi'}$ of an irreducible admissible representation $\Pi'$ of $\wt\G'$ to an invariant eigendistribution $\Theta'_{\Pi'}$ on the group $\wt\G$ with the correct infinitesimal character in the sense of the infinitesimal character correspondence,  see \cite{BerPrzeCHC_inv_eig} and \cite{PrzebindaInfinitesimal}. If the pair is in the stable range with $\G'$ the smaller member and if the representation is unitary, then $\Theta'_{\Pi'}=\Theta_\Pi$, where $\Pi$ is associated to $\Pi'$ via Howe's correspondence, \cite{PrzebindaStableUnitary}. The same happens if $\G'=\Ug_{p',q'}$ and $\G=\Ug_{p,q}$ with $p'+q'=p+q$ and $\Pi'$ is a discrete series representation, \cite{Merino2021discreteseries}. The acronym $\Chc$ stands for the Cauchy Harish-Chandra integral, because as explained in \cite{PrzebindaCauchy}, the construction gives a direct link from the Cauchy determinantal identity through Harish-Chandra's theory of the semisimple orbital integrals to Howe's correspondence.

The purpose of this article is to transfer the construction of \cite{PrzebindaCauchy} to the p-adic case.
We provide the precise definition of the integral kernel operator $\Chc$ and formulate a conjecture that roughly $\Theta'_{\Pi'}=\Theta_\Pi$. Since this takes a considerable space, we will leave reporting on some properties of the new p-adic $\Chc$ for future papers. 

We are interested in groups that occur as members of dual pairs, as defined in \cite{howetheta}. Thus we fix a non-archimedean local field $\F$ of characteristic $0$. A member of a dual pair of type II is the general linear group $\G=\GL_\Df(\V)$, where $\Df$ is a finite dimensional division algebra over $\F$ and $\V$ is a finite dimensional right $\Df$ vector space. The Lie algebra of $\GL_\Df(\V)$ is $\End_\Df(\V)$.
A member of a dual pair of type I is the isometry group $\G \subseteq \GL_\Df(\V)$ of a non-degenerate $\sigma$-hermitian or $\sigma$-skew-hermitian form $(\cdot,\cdot)$ on $\V$, where $\sigma$ is a possibly trivial involution on $\Df$, fixing $\F$ pointwise.  The Lie algebra $\g$ of $\G$ is contained in $\End_\Df(\V)$.

\section{\bf Cartan subgroups.} 
\ Below we shall describe the Cartan subalgebras in $\g$ and Cartan subgroups in $\G$. By definition, a Cartan subalgebra of $\g$ is the centralizer of a regular semisimple element, and a Cartan subgroup of $\G$ is the centralizer of a Cartan subalgebra. They occur in the literature in the cases when $\Df$ is commutative, see \cite{MorrisTamely}  and \cite{Ju-LeeKim_HeckeAlgebras}. The proofs
given there require some knowledge of the theory of algebraic groups. Our arguments circumvent that theory.

\subsection{\bf Semisimple elements.}\label{General linear groups}
\ We will  say that an element $x\in\End_\Df(\V)$ is semisimple if $\V$ decomposes into a direct sum of 
$x$-irreducible subspaces over $\Df$. 
Then $x$ is semisimple as an element of $\End_\F(\V)$. Indeed, since the field $\F$ is of characteristic zero, $x$ has the Jordan-Chevalley decomposition, $x=x_s+x_n$, where $x_s$ is semisimple and $x_n$ is nilpotent and both are polynomials in $x$ with coefficients in $\F$, see
\cite[pages 96-98]{JacobsonLie}. Hence they commute with the action of $\Df$. Therefore, $x_s, x_n\in  \End_\Df(\V)$. 
On each $x$-irreducible component $U$, $x_n$ is nilpotent so $\Ker\, x_n |_U$ is nonzero. Hence $\Ker\, x_n|_U = U$. This implies that $x_n=0$.  
Being semisimple as an element of $\End_\F(\V)$ is equivalent to the minimal polynomial $P_x(t)\in\F[t]$ being a product of irreducible polynomials, each with multiplicity $1$, see \cite[Exercise 10, page 662]{LangAlgebra2002}.  In particular the subset of the semisimple elements is Zariski dense.
\subsection{\bf Regular elements.}\label{Regular elements}
\ Given a finite dimensional vector space $\Uv$ over a commutative field  $\F$ and a linear map $\L \colon \Uv\to\Uv$, there are $\L$-invariant subspaces $\Uv_{0,\L}$ and $\Uv_{1,\L}$ such that $\L$ restricted to $ \Uv_{0,\L}$ is nilpotent, $\L$ restricted to $ \Uv_{1,\L}$ is bijective and  $\Uv=\Uv_{0,\L}\oplus\Uv_{1,\L}$. 
This is called the Fitting decomposition of $\Uv$ with respect to $\L$, see \cite[ pages 37-38]{JacobsonLie}. The subspace $\Uv_{0,\L}$ is called the Fitting null component of $\Uv$ relative to $\L$.

Let $l$ be the minimal dimension of the Fitting null component of $\ad x$ as $x$ varies through~$\g$, 
\[
l= \min\{\dim_{\F} (\g_{0,{\mathrm{ad}} \, x}) \ : x\in\g\}\,.
\]
Following Harish-Chandra, \cite[page 63]{Harish-Chandra-van-Dijk}, we introduce a polynomial function $\eta \colon \g\to\F$ by
\begin{equation}\label{functioneta}
\det(t-\ad x)_\g=\eta(x) t^l+ \ldots \text{terms of higher degree in}\ t \qquad (x\in\g)\,.
\end{equation}
According to Harish-Chandra, $x\in\g$ is regular if and only if $\eta(x)\ne 0$. 
Thus the subset of the regular elements is Zariski open, by definition.

\subsection{\bf A general linear group.}\label{A general linear group}
\ 
\begin{lem}\label{regularissemisimple0}
Let $\Ef$ be a field extension of $\F$. Suppose $v_1, v_2, \ldots, v_m\in \F^n$ are linearly independent over $\F$. Then they are also linearly independent over $\Ef$ in $\Ef^n$.
\end{lem}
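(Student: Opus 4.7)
The plan is to reduce the problem to a statement about determinants. Form the $n\times m$ matrix $M$ whose columns are $v_1,\dots,v_m$. By the standard column/minor characterization of rank over a field, the vectors $v_1,\dots,v_m$ are linearly independent over $\F$ if and only if the rank of $M$ over $\F$ equals $m$, which in turn is equivalent to the existence of an $m$-element index set $I\subseteq\{1,\dots,n\}$ such that the $m\times m$ submatrix $M_I$ obtained by selecting the rows indexed by $I$ has a nonzero determinant in $\F$.

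Now view $M$ as an $n\times m$ matrix with entries in $\Ef$ via the inclusion $\F\subseteq\Ef$. The minor $\det(M_I)$ is computed by the same polynomial expression in the matrix entries, so its value in $\Ef$ is the image, under the inclusion, of its value in $\F$. In particular it is still nonzero. Applying the same rank-via-minors criterion over $\Ef$, we conclude that $M$ has $\Ef$-rank at least $m$, hence exactly $m$, and therefore $v_1,\dots,v_m$ remain linearly independent over $\Ef$.

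No step here looks like a real obstacle; the only thing to be careful about is to invoke the minor criterion cleanly so that the passage from $\F$ to $\Ef$ is transparent. As an alternative, one could give a short tensor-theoretic proof by writing a hypothetical $\Ef$-linear relation $\sum a_iv_i=0$, expanding each $a_i\in\Ef$ in an $\F$-basis of $\Ef$, and separating into $\F$-linear relations among the $v_i$; but the minor argument above is more direct and keeps the exposition in the elementary style adopted elsewhere in this section.
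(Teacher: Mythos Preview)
Your proof is correct and follows essentially the same idea as the paper's: both arguments reduce to the fact that the rank of a matrix with entries in $\F$ is unchanged when the matrix is viewed over $\Ef$. The only cosmetic difference is that the paper invokes the invariance of the reduced row echelon form under field extension, whereas you invoke the invariance of a nonzero $m\times m$ minor; either justification is fine.
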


\begin{prf}
We may write $v_i$ as a row vector. Then $v_1, \ldots, v_m$ form an $m$ by $n$ matrix $A$ over~$\F$. Let $A'$ be its reduced row echelon form over $\F$. It is the same reduced echelon matrix~$A'$ if we work over $\Ef$. The rank of the matrix~$A'$ is $m$ since the vectors are $\F$-linearly independent. This implies that the $m$ is also the dimension of the span of the vectors over~$\Ef$.
\end{prf}
\begin{lem}\label{regularissemisimple1}
Let $\Ef$ be a field extension of $\F$ and let $X\in M_{n,n}(\F)$. Denote by $P_\F(t)\in\F[t]$ the minimal polynomial of $X$ and by $P_\Ef(t)\in\Ef[t]$ the minimal polynomial of $X$ viewed as an element of 
$M_{n,n}(\Ef)$. Then $P_\Ef(t)=P_\F(t)$.
\end{lem}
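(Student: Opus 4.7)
The plan is to show the two divisibilities $P_\Ef \mid P_\F$ and $P_\F \mid P_\Ef$ in $\Ef[t]$; since both polynomials are monic, this forces equality.

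First I would argue $P_\Ef(t) \mid P_\F(t)$. This is the easy direction: we have $P_\F(X)=0$ when $X$ is viewed as an element of $M_{n,n}(\F)$, and since $M_{n,n}(\F) \subseteq M_{n,n}(\Ef)$, the same identity $P_\F(X)=0$ holds in $M_{n,n}(\Ef)$. By the defining property of the minimal polynomial over $\Ef$, we conclude that $P_\Ef(t)$ divides $P_\F(t)$ in $\Ef[t]$, and in particular $\deg P_\Ef \le \deg P_\F$.

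For the reverse inequality $\deg P_\F \le \deg P_\Ef$, I would use Lemma \ref{regularissemisimple0}. Let $k = \deg P_\F$. Minimality of $P_\F$ over $\F$ means that no nonzero polynomial of degree $<k$ in $\F[t]$ annihilates $X$, so the matrices $I, X, X^2, \ldots, X^{k-1}$ are linearly independent over $\F$ in $M_{n,n}(\F) \cong \F^{n^2}$. Applying Lemma \ref{regularissemisimple0} with these $k$ vectors, they remain linearly independent over $\Ef$ inside $\Ef^{n^2} \cong M_{n,n}(\Ef)$. Hence no nonzero polynomial of degree $<k$ in $\Ef[t]$ can annihilate $X$, which forces $\deg P_\Ef \ge k = \deg P_\F$.

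Combining the two inequalities gives $\deg P_\Ef = \deg P_\F$, and together with $P_\Ef \mid P_\F$ and monicity of both, this yields $P_\Ef(t) = P_\F(t)$. There is no real obstacle in this argument; the only substantive point is the transfer of linear independence from $\F$ to $\Ef$, which is precisely what the preceding Lemma \ref{regularissemisimple0} is designed to supply, so the proof reduces to assembling these two observations.
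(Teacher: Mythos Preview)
Your proof is correct and follows essentially the same approach as the paper: one divisibility is immediate, and the degree inequality in the other direction is obtained from Lemma~\ref{regularissemisimple0}. The only cosmetic difference is that the paper phrases the second step via the contrapositive (the $d+1$ powers $I,X,\ldots,X^{d}$ with $d=\deg P_\Ef$ are dependent over $\Ef$, hence over $\F$), while you argue directly that $I,X,\ldots,X^{k-1}$ with $k=\deg P_\F$ are independent over $\F$, hence over $\Ef$.
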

\begin{prf}
Clearly $P_\Ef(t)$ divides $P_\F(t)$ in $\Ef[t]$. Let $d$ denote the degree of $P_\Ef(t)$. Then the vectors
\[
I\,,\ \ \ X\,,\ \ \ X^2\,,\ \ \ ...\,,\ \ \ X^d\in M_{n,n}(\F)
\]
are linearly dependent over $\Ef$. Since each of them is in $M_{n,n}(\F)$, Lemma \ref{regularissemisimple0} shows that
they
are linearly dependent over $\F$. Hence, 
\[
\deg P_\F(t)\leq \deg P_\Ef(t)\,.
\]
Therefore $P_\Ef(t)=P_\F(t)$.
\end{prf}
\begin{lem}\label{regularissemisimple}
Any regular element in $\End_\F(\V)$ is semisimple.
\end{lem}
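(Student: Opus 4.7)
The plan is to invoke the Jordan--Chevalley decomposition $x = x_s + x_n$ in $\g := \End_\F(\V)$ (recalled in Section \ref{General linear groups}) and show that regularity forces $x_n = 0$. Strategically, I will identify the Fitting null component of $\ad x$ with the centralizer $\Zg_\g(x_s)$, compute its dimension after base change to the algebraic closure $\bar\F$, and conclude via the minimality constraint defining regularity.

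First I would establish the identification $\g_{0,\,\ad x} = \Zg_\g(x_s)$. Since $x_s$ and $x_n$ commute, so do $\ad x_s$ and $\ad x_n$; upon extending scalars to $\bar\F$, the former becomes semisimple while the latter remains nilpotent, so their sum is the Jordan--Chevalley decomposition of $\ad x$. Consequently the Fitting null component of $\ad x$---the generalized $0$-eigenspace of $\ad x$---coincides with the $0$-eigenspace of $\ad x_s$, namely $\Zg_\g(x_s)$. Kernel dimensions of polynomials in $\ad x$ are unchanged by scalar extension (Lemma \ref{regularissemisimple0}), so this identification may be used to compute $\dim \g_{0,\,\ad x}$ over $\bar\F$.

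Next I would perform that computation. If $\bar x_s := x_s \otimes 1$ has eigenvalue multiplicities $m_1,\ldots,m_k$ (so $\sum_i m_i = n := \dim_\F \V$), then $\Zg_{\bar\g}(\bar x_s)$ is a product of matrix algebras of total dimension $\sum_i m_i^2 \geq n$, with equality precisely when each $m_i = 1$. To verify $l = n$, I would exhibit an attaining element: take $\V = \F(\alpha)$ for $\alpha$ a primitive element of a degree-$n$ separable extension of $\F$ (available since $\F$ has characteristic zero), and let $x$ act by multiplication by $\alpha$; its characteristic polynomial equals the separable minimal polynomial of $\alpha$, so $x$ has $n$ distinct eigenvalues over $\bar\F$.

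Putting these together: if $\eta(x) \neq 0$ then $\dim \g_{0,\,\ad x} = n$, forcing $\bar x_s$ to have $n$ distinct eigenvalues over $\bar\F$. In an eigenbasis, $\Zg_{\bar\g}(\bar x_s)$ consists of diagonal---hence semisimple---matrices; since $\bar x_n$ commutes with $\bar x_s$ and is nilpotent, it must vanish, whence $x_n = 0$ and $x = x_s$ is semisimple. The main obstacle I anticipate is the first identification, specifically checking that $\ad x_s + \ad x_n$ is the Jordan--Chevalley decomposition of $\ad x$; this requires the compatibility of the decomposition with scalar extension, since $x_s$ need not be diagonalizable over $\F$.
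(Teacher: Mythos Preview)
Your argument is correct, but it takes a different route from the paper's. The paper cites Jacobson's general theory---that any regular element lies in a Cartan subalgebra (in Jacobson's sense), and that over an algebraically closed field such a subalgebra consists of semisimple elements---and then descends to $\F$ via Lemma~\ref{regularissemisimple1} (invariance of the minimal polynomial under field extension) together with the characterization of semisimplicity by a square-free minimal polynomial. You instead work directly with the Jordan--Chevalley decomposition $x = x_s + x_n$, identify the Fitting null component of $\ad x$ with $\Zg_\g(x_s)$, compute $\dim \Zg_{\bar\g}(\bar x_s) = \sum_i m_i^2 \ge n$ explicitly, pin down $l = n$ by exhibiting an element with simple spectrum, and conclude $x_n = 0$. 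Your approach is more self-contained (no appeal to the structure theory of Cartan subalgebras) and makes the numerology $l = n$ explicit, at the cost of being longer; the paper's is brief but outsources the substance to Jacobson. Incidentally, the ``main obstacle'' you flag is lighter than you suggest: since $x_s$ has square-free minimal polynomial and $\F$ has characteristic zero, $L_{x_s}$ and $R_{x_s}$ are already semisimple over $\F$, hence so is $\ad x_s = L_{x_s} - R_{x_s}$, and the identification $\g_{0,\ad x} = \Zg_\g(x_s)$ holds over $\F$ without passing to $\bar\F$.
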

\begin{prf}
In \cite[chapter III, section 1, pages 60-61]{JacobsonLie} Jacobson proves that any regular element is contained in a Cartan subalgebra, as defined by him, \cite[page 57]{JacobsonLie}. In \cite[chapter III, section 3, Theorem 2]{JacobsonLie}  he shows that if $\F$ is algebraically closed, then his Cartan subalgebra consists of semisimple elements, as defined in this article. However, by Lemma \ref{regularissemisimple1} the minimal polynomial remains the same after a field extension. Hence, by \cite[Exercise 10, page 662]{LangAlgebra2002} a regular element in $\End_\F(\V)$ is semisimple. 
\end{prf}
As is well known, $\End_\Df(\V)$ is a central simple algebra with the center equal to $Z(\Df)$ (times the identity), see  \cite[Theorem 5.5, page 656]{LangAlgebra2002}.
Recall, \cite[Theorem, page 236]{Pierce}, that for any central simple algebra $A$, $\dim_{Z(A)} A=(\deg A)^2$, where $\deg A$ is a positive integer called the degree of $A$. In particular,
\[
\deg \End_\Df(\V)=\dim_\Df\V\cdot \deg \Df\,.
\]
\begin{lem}\label{EndD1}
Let $x\in\End_\Df(\V)$ be such that $\V$ is $x$-irreducible over $\Df$. Then
\[
\dim_{Z(\Df)}\End_\Df(\V)^x\geq \deg \End_\Df(\V)
\]
with the equality if and only if
\[
\End_\Df(\V)^x=Z(\Df)[x]\,.
\]
Furthermore, $Z(\Df)[x]$ is a field.
\end{lem}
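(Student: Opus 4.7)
The plan is to identify $B := \End_\Df(\V)^x$ with the centralizer of the commutative subalgebra $A := Z(\Df)[x]$ inside the central simple algebra $\End_\Df(\V)$, and then apply the double centralizer theorem together with the inclusion $A \subseteq B$. The inequality and the equality case will fall out of pure dimension counting once $A$ is known to be a field.

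First I would prove the ``furthermore'' part, namely that $A$ is a field. Writing $K := Z(\Df)$, let $p(t) \in K[t]$ be the minimal polynomial of $x$, so $A \cong K[t]/(p(t))$; it is enough to show that $p$ is irreducible over $K$. If $p = p_1 p_2$ with $\gcd(p_1,p_2)=1$ in $K[t]$, the Chinese Remainder Theorem produces a nontrivial idempotent $e \in A \subseteq \End_\Df(\V)$, and then $e\V$ is a proper nonzero $\Df$-submodule stable under $x$, contradicting the $x$-irreducibility hypothesis. If instead $p = q^k$ with $q$ irreducible and $k \geq 2$, then $q(x)$ is nonzero (by minimality of $p$) but nilpotent (since $q(x)^k = 0$), so $q(x)\V$ is again a proper nonzero $\Df[x]$-submodule of $\V$. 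Thus $p$ is irreducible and $A$ is a field.

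Next I would invoke the double centralizer theorem. Set $N := \deg \End_\Df(\V) = \dim_\Df(\V) \cdot \deg \Df$, so that $\dim_K \End_\Df(\V) = N^2$. Since $A$ is a simple (indeed a field) $K$-subalgebra of the central simple $K$-algebra $\End_\Df(\V)$, the double centralizer theorem yields
\[
\dim_K A \cdot \dim_K B = N^2,
\]
where $B$ is the centralizer of $A$ in $\End_\Df(\V)$. But $B$ coincides with $\End_\Df(\V)^x$, because $K = Z(\Df)$ already lies in the center of $\End_\Df(\V)$, so centralizing $A$ amounts to centralizing $x$. Writing $m := \dim_K A$, this reads $\dim_K B = N^2/m$.

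Finally, since $A \subseteq B$ (every element of $K[x]$ commutes with $x$), we have $m = \dim_K A \leq \dim_K B = N^2/m$, which forces $m \leq N$ and hence $\dim_K B \geq N = \deg \End_\Df(\V)$. Equality $\dim_K B = N$ is equivalent to $m = N$, and then $\dim_K A = \dim_K B$ together with $A \subseteq B$ yields $A = B$; conversely $A = B$ immediately gives $m^2 = N^2$, so $m = N$ and $\dim_K B = N$. The chief delicacy lies in showing $A$ is a field, specifically ruling out the repeated-factor case $p = q^k$ with $k \geq 2$, where the $x$-irreducibility of $\V$ must be used via the nilpotency of $q(x)$; once this is settled, the remainder is a direct application of the structure theory of central simple algebras.
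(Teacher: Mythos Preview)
Your proof is correct and follows essentially the same strategy as the paper: both apply the double centralizer theorem to the pair $A=Z(\Df)[x]\subseteq B=\End_\Df(\V)^x$ inside the central simple algebra $\End_\Df(\V)$, and then deduce the inequality and equality case from $A\subseteq B$ together with $\dim_K A\cdot\dim_K B=N^2$. The only noteworthy difference is in establishing that $A$ is a field: the paper invokes Schur's Lemma to see that $B$ is a division algebra (so any commutative subring is a field), whereas you give a direct elementary argument via the minimal polynomial, which is a bit longer but avoids the appeal to Schur.
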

\begin{prf}
Schur's Lemma, \cite[p.118]{JacobsonBasicII}, implies that $\End_\Df(\V)^x$ is a division algebra over 
$Z(\Df)$. Hence, $Z(\Df)[x]\subseteq \End_\Df(\V)^x$ is a subfield, see  \cite[Lemma b, page 235]{Pierce}. In particular 
\begin{equation} \label{eqZDlessEndV}
\dim_{Z(\Df)}Z(\Df)[x] \leq  \dim_{Z(\Df)} \End_\Df(\V)^x
\end{equation}
and $Z(\Df)[x]$ is a simple algebra over $Z(\Df)$. Applying the theorem on page 232 
in \cite{Pierce} to the two algebras $Z(\Df)[x]\subseteq \End_\Df(\V)$, we see that $Z(\Df)[x]$ and $\End_\Df(\V)^x$
are mutual centralizers. In other words, $Z(\Df)[x]$ is the center of $\End_\Df(\V)^x$. Moreover
\begin{equation} \label{eqdimZDdimEndD}
\dim_{Z(\Df)}Z(\Df)[x]\cdot \dim_{Z(\Df)} \End_\Df(\V)^x=\dim_{Z(\Df)}\End_\Df(\V)\ = (\deg\End_\Df(\V))^2.
\end{equation}
Using \eqref{eqZDlessEndV}, we get the inequality
\[
\dim_{Z(\Df)}\End_\Df(\V)^x\geq \deg \End_\Df(\V).
\]
If $Z(\Df)[x]=\End_\Df(\V)^x$, then the above is an equality.

Conversely suppose $\dim_{Z(\Df)}\End_\Df(\V)^x = \deg \End_\Df(\V)$. By \eqref{eqdimZDdimEndD} we get
\[
\dim_{Z(\Df)}Z(\Df)[x] = \deg\End_\Df(\V).
\]
This implies that $Z(\Df)[x]$ is maximal (equivalently, strictly maximal) subfield  of $\End_\Df(\V)$. Thus it is also a maximal subfield of $\End_\Df(\V)^x$. Since $Z(\Df)[x]$ is the center of $\End_\Df(\V)^x$, we get $Z(\Df)[x]=\End_\Df(\V)^x$.
\end{prf}
\begin{lem}\label{EndD2}
Let $x\in\End_\Df(\V)$ be semisimple. Then
\begin{equation}\label{EndD2.1}
\dim_{Z(\Df)}\End_\Df(\V)^x\geq \deg \End_\Df(\V)
\end{equation}
with the equality if and only if
\begin{equation}\label{EndD2.2}
\V=\V_1\oplus \V_2\oplus \ldots \oplus \V_n
\end{equation}
is the direct sum of $x$-irreducibles over $\Df$ with each
\begin{equation}\label{EndD2.3}
\End_\Df(\V_j)^x=Z(\Df)[x|_{\V_j}]
\end{equation}
a field, and 
\begin{equation}\label{EndD2.4}
\End_\Df(\V)^x=\End_\Df(\V_1)^x\oplus \End_\Df(\V_2)^x\oplus \ldots \oplus \End_\Df(\V_n)^x\,.
\end{equation}
\end{lem}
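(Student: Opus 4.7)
The plan is to reduce Lemma \ref{EndD2} to Lemma \ref{EndD1} block-by-block, using the decomposition of $\V$ into $x$-irreducibles guaranteed by the semisimplicity of $x$. Fix any such decomposition $\V=\V_1\oplus\V_2\oplus\ldots\oplus\V_n$ over $\Df$. Since each $\V_j$ is $x$-invariant, $x$ is already block diagonal and any block-diagonal $\Df$-linear endomorphism commuting block-by-block with $x$ lies in $\End_\Df(\V)^x$; this gives the inclusion
\[
\End_\Df(\V_1)^x\oplus\End_\Df(\V_2)^x\oplus\ldots\oplus\End_\Df(\V_n)^x\ \subseteq\ \End_\Df(\V)^x.
\]

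Next I would apply Lemma \ref{EndD1} to each $x|_{\V_j}\in\End_\Df(\V_j)$, which is the $x$-irreducible case, to obtain
\[
\dim_{Z(\Df)}\End_\Df(\V_j)^x\ \geq\ \deg \End_\Df(\V_j),
\]
with equality if and only if $\End_\Df(\V_j)^x=Z(\Df)[x|_{\V_j}]$, a field. Combining the inclusion above with the per-block estimates yields
\[
\dim_{Z(\Df)}\End_\Df(\V)^x\ \geq\ \sum_{j=1}^n \dim_{Z(\Df)}\End_\Df(\V_j)^x\ \geq\ \sum_{j=1}^n \deg\End_\Df(\V_j).
\]
Then I would invoke the identity $\deg\End_\Df(\V)=\dim_\Df\V\cdot \deg\Df$ recorded after Lemma \ref{regularissemisimple}, so that
\[
\sum_{j=1}^n \deg\End_\Df(\V_j)=\sum_{j=1}^n \dim_\Df\V_j\cdot \deg\Df=\dim_\Df\V\cdot \deg\Df=\deg\End_\Df(\V),
\]
which establishes inequality \eqref{EndD2.1}.

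For the equality statement, both of the two inequalities in the chain above must saturate simultaneously. Saturation of the inner ones forces $\End_\Df(\V_j)^x=Z(\Df)[x|_{\V_j}]$ for every $j$ by the equality case of Lemma \ref{EndD1}, giving \eqref{EndD2.3}. Saturation of the inclusion forces $\End_\Df(\V)^x=\bigoplus_j \End_\Df(\V_j)^x$, which is \eqref{EndD2.4}. Conversely, if a decomposition into $x$-irreducibles has both properties, reading the chain in reverse gives $\dim_{Z(\Df)}\End_\Df(\V)^x=\deg\End_\Df(\V)$.

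I do not anticipate a genuine obstacle: the argument is an assembly of already-proved ingredients. The one point that deserves care is that the forward implication in the equality statement does not depend on the choice of $x$-irreducible decomposition — different decompositions can differ when irreducible summands are $\Df[x]$-isomorphic — so one must note that \eqref{EndD2.1} and its proof apply to any such decomposition, and hence the conclusions \eqref{EndD2.3}–\eqref{EndD2.4} hold for every decomposition simultaneously. Beyond this, the argument is essentially bookkeeping around Lemma \ref{EndD1} and the degree formula for $\End_\Df(\V)$.
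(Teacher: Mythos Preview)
Your proposal is correct and follows essentially the same route as the paper's proof: fix an $x$-irreducible decomposition, use the block-diagonal inclusion \eqref{EndD2.5}, apply Lemma \ref{EndD1} on each block, and sum using the additivity of $\deg\End_\Df(\cdot)$. Your write-up is in fact slightly more explicit than the paper's about the additivity step $\sum_j \deg\End_\Df(\V_j)=\deg\End_\Df(\V)$, and your remark on independence of the chosen decomposition is a welcome clarification that the paper leaves implicit.
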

\begin{prf}
Fix a decomposition \eqref{EndD2.2}. Then 
\begin{equation}\label{EndD2.5}
\End_\Df(\V)^x\supseteq \End_\Df(\V_1)^x\oplus \End_\Df(\V_2)^x\oplus \ldots \oplus \End_\Df(\V_n)^x
\end{equation}
and by Lemma \ref{EndD1}, 
\begin{equation}\label{EndD2.6}
\dim_{Z(\Df)}\End_\Df(\V_j)^x\geq \deg \End_\Df(\V_j)
\end{equation}
for all $j$. This implies \eqref{EndD2.1}. Conversely equality in \eqref{EndD2.1} implies equalities in \eqref{EndD2.6} and \eqref{EndD2.5}. 
Hence \eqref{EndD2.3} follows from Lemma \ref{EndD1}.
\end{prf}
For a regular semisimple element $x\in \End_\Df(\V)$, one can show that the decomposition \eqref{EndD2.2} and \eqref{EndD2.4} hold.
Thus the $Z(\Df)$-subalgebra generated by the restriction of $x$ to $\V_j$ in $\End_\Df(\V_j)$ is a field extension $\Ef_j$ of $\F$ and  a strictly maximal subfield  of $\End_\Df(\V_j)$. The centralizer of $x$ in $\End(\V)$ is the direct sum
\begin{equation}\label{csainglD}
\End(\V)^x=\Ef_1\oplus\Ef_2\oplus \ldots \oplus\Ef_n\,.
\end{equation}
This is a Cartan subalgebra of $\End(\V)$ and the above formula describes all of them up to conjugation.
The centralizer of a Cartan subalgebra in the group $\GL(\V)$ is called a Cartan subgroup. In the above terms
\begin{equation}\label{csginglD}
\GL_\Df(\V)^x=\Ef_1^\times\times\Ef_2^\times\times \ldots \times\Ef_n^\times\,.
\end{equation}
For a Cartan subgroup 
\[
\H = \Ef_1^\times \times \Ef_2^\times \times \ldots \times \Ef_n^\times
\] 
as above, we define the split part of $\H$ as
\[
\A=\underbrace{\F^\times \times\ldots \times \F^\times}_{n \text{ copies}}\subseteq \H\,.
\]
Then $\H/\A$ is compact. Indeed, this follows from the lemma below, which we write in terms of \cite[section I.4]{Weil_Basic}. Denote by $| \cdot |_\Df$ the module on $\Df$, as in \cite[page 4]{Weil_Basic}.

\begin{lem}\label{compactnessOfQuotient}
Denote by $\fo'\subseteq \Df$ and $\fo\subseteq \F$ the corresponding rings of integers. Let $\varpi'\in \Df$ and $\varpi\in \F$ be prime elements and let $e$ denote the order of ramification of $\Df$ over $\F$. Then
\[
\Df^\times=\sum_{j=0}^{e-1}\varpi'{}^j \fo'{}^\times \F^\times\,.
\]
\end{lem}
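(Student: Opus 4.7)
The plan is to reduce the statement to the elementary fact that every element of $\Df^\times$ is uniquely a power of the uniformizer $\varpi'$ times a unit in $\fo'{}^\times$, combined with the definition of the ramification index $e$ that lets one trade $\varpi'{}^{e}$ for an element of $\F^\times\fo'{}^\times$.

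Let $v'\colon\Df^\times\to\Zb$ denote the discrete valuation on $\Df$ with $v'(\varpi')=1$ and unit group $\fo'{}^\times$. By definition of the ramification index, $v'(\varpi)=e$, so $v'(\F^\times)=e\Zb$. The first observation is that $\varpi'{}^{e}$ and $\varpi$ differ by a unit: $w:=\varpi'{}^{e}\varpi^{-1}$ has $v'$-value $0$, hence $w\in\fo'{}^\times$. Given $a\in\Df^\times$, I would write the integer $v'(a)=qe+j$ with $0\le j\le e-1$, set $u=a\varpi'{}^{-v'(a)}\in\fo'{}^\times$, and compute
\[
a\;=\;\varpi'{}^{qe+j}u\;=\;\varpi'{}^{j}\bigl(\varpi'{}^{e}\bigr)^{q}u\;=\;\varpi'{}^{j}w^{q}\varpi^{q}u\;=\;\varpi'{}^{j}(w^{q}u)\varpi^{q},
\]
using in the last step that $\F^\times$ is central in $\Df^\times$. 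Since $w^{q}u\in\fo'{}^\times$ and $\varpi^{q}\in\F^\times$, this shows $a\in\varpi'{}^{j}\fo'{}^\times\F^\times$, giving the containment $\Df^\times\subseteq\bigcup_{j=0}^{e-1}\varpi'{}^{j}\fo'{}^\times\F^\times$; the reverse inclusion is immediate.

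If the $\sum$ in the statement is to be read as a disjoint union, as the subsequent compactness argument for $\H/\A$ suggests, disjointness follows from the same valuation: any element of $\varpi'{}^{j}\fo'{}^\times\F^\times$ has $v'$-value in $j+e\Zb$, so the pieces for distinct $j\in\{0,\dots,e-1\}$ cannot meet. There is no real obstacle in the argument; the only point worth keeping in mind is that although $\Df$ may be non-commutative, $\F^\times$ lies in the center of $\Df^\times$, so the product $\fo'{}^\times\F^\times$ is an honest subgroup of $\Df^\times$, and $v'$ is a well-defined valuation as in \cite[Section I.4]{Weil_Basic}.
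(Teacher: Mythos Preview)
Your proof is correct and follows essentially the same approach as the paper: both arguments reduce to the decomposition $\Df^\times=\langle\varpi'\rangle\fo'{}^\times$ and the division with remainder $n=ek+j$, the only cosmetic difference being that the paper works with the module $|\cdot|_\Df$ while you use the additive valuation $v'$. Your extra remark on disjointness and on the centrality of $\F^\times$ is a welcome addition that the paper omits.
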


\begin{prf}
Since $\Df^\times=\langle \varpi'\rangle \fo'^\times$, where $\langle \varpi'\rangle=\{\varpi'{}^n;\ n\in \Bbb Z\}$, see \cite[page 32]{Weil_Basic}, it suffices to check that for any $n\in \Bbb Z$ there are $k\in \Bbb Z$, $0\leq j\leq e-1$ and $x\in \fo'^\times$ such that
\[
\varpi'{}^n=\varpi'{}^j x\varpi^k \,.
\]
Equivalently
\[
|\varpi'{}^{(n-j)}\varpi^{-k})|_{\Df}= 1\,.
\]
Let $q'$ be the modulus of $\Df$. Then
\[
|\varpi'{}^{(n-j)}\varpi^{-k}|_{\Df}=|\varpi'{}^{(n-j)}|_{\Df}|\varpi^{-k}|_{\Df}=q'{}^{j-n}q'{}^{ek}\,.
\]
Thus we want
\[
n= ek+j\,,
\]
which is possible by taking $k$ to be the quotient and $j$ to be the remainder from the division of $n$ by $e$. 
\end{prf}

For future reference, the Lie algebras of  $\H$ and $\A$ are respectively
\begin{equation}\label{CSAin II}
\h=\Ef_1\oplus \Ef_2\oplus \ldots \oplus \Ef_n\,,\ \ \ \a=\F\oplus \F \oplus \ldots \oplus \F\,.
\end{equation}

\subsection{\bf An isometry group.}\label{An isometry group}
\ 
Let $\Df$ be a division $\F$-algebra with an involution $\sigma$.
We have the inclusions
\begin{equation}\label{theinclusions}
\F\subseteq Z(\Df)^\sigma\subseteq Z(\Df)
\subseteq \Df\,.
\end{equation}
According to \cite[page 728]{howetheta}, (see also Paul Garrett's ``Algebras with involution" online, or \cite[Theorem 2.2, page 353]{Scharlau} for a proof) one of the following three cases happens.
\begin{itemize}
\item $\Df=Z(\Df)$  and $\sigma=1$;\\
\item $\Df=Z(\Df)$  and $\sigma\ne 1$;\\
\item $\Df$ is a quaternion division algebra over $Z(\Df)$, and $\sigma$ is trivial on $Z(\Df)$.
\end{itemize}
Let $\V$ be a finite dimensional right $\Df$-vector space with a non-degenerate $\Df$-hermitian or skew-hermitian form $(\cdot,\cdot)$. In the third case above,  non-degenerate $\Df$-hermitian form is determined by its rank, see \cite[page264]{LewisD_IsometryClassification}. There are four types of non-degenerate $\Df$-skew-hermitian forms, see \cite[Theorem 3]{TsukamotoAnti-Hermitian1961}. 
In any case the form $(\cdot,\cdot)$ determines an involution on the algebra $\End_\Df(\V)$,
\[
(xu,v)=(u,\sigma(x) v)) \qquad (u,v\in\V, x\in \End_\Df(\V))\,,
\]
which coincides with the original involution on $\Df$ (times the identity acting on the left).
Recall the isometry group
\begin{equation} \label{eqisometrygroup}
\G=\{x\in \End_\Df(\V);\ x\sigma(x)=1\}=\End_\Df(\V)_{\sigma=\inv}\,, 
\end{equation}
where $\inv(x)=x^{-1}$, with the Lie algebra
\[
\g=\{x\in \End_\Df(\V);\ x+\sigma(x)=0\}=\End_\Df(\V)_{\sigma=-1}\,.
\]
Fix a non-zero semisimple element $x\in \g$.
\begin{lem}\label{gD1}
Let $x\in\g$ be a non-zero semisimple element such that $\V\ne 0$ is $x$-irreducible over $\Df$. Then
\[
\dim_{Z(\Df)^\sigma}\g^x\geq \frac{1}{2}\cdot \dim_{Z(\Df)^\sigma} Z(\Df)\cdot \deg \End_\Df(\V).
\]
Equality holds if and only if
\[
\g^x=Z(\Df)[x]_{\sigma=-1}\,.
\]
Furthermore, if equality holds, then $Z(\Df)[x]$ is a field.
\end{lem}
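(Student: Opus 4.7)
The plan is to leverage Lemma \ref{EndD1} together with the $\sigma$-action on $\End_\Df(\V)^x$. First I would check that $\sigma$ preserves $\End_\Df(\V)^x$: if $y$ commutes with $x$, then $\sigma(y)\sigma(x) = \sigma(xy) = \sigma(yx) = \sigma(x)\sigma(y)$, and since $\sigma(x) = -x$, this forces $\sigma(y)$ to commute with $x$. Thus $\sigma$ restricts to an involution on $\End_\Df(\V)^x$ and we get the $Z(\Df)^\sigma$-eigenspace decomposition
\[
\End_\Df(\V)^x = \End_\Df(\V)^x_{\sigma=1} \oplus \End_\Df(\V)^x_{\sigma=-1}\,,
\]
with $\g^x = \End_\Df(\V)^x_{\sigma=-1}$.

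The key step is to show that the two eigenspaces have the same $Z(\Df)^\sigma$-dimension. Since $\V$ is $x$-irreducible over $\Df$ and $x \ne 0$, the kernel of $x$ is a proper $\Df$- and $x$-invariant subspace of $\V$, hence trivial, so $x$ is invertible. Left multiplication by $x$ is then a $Z(\Df)^\sigma$-linear bijection of $\End_\Df(\V)^x$. For $y \in \End_\Df(\V)^x$ with $\sigma(y) = y$ we compute $\sigma(xy) = \sigma(y)\sigma(x) = -yx = -xy$, so this bijection sends the $+1$-eigenspace into the $-1$-eigenspace; the opposite inclusion is obtained symmetrically. Hence $x$ interchanges the two eigenspaces, giving
\[
\dim_{Z(\Df)^\sigma} \g^x = \tfrac{1}{2}\dim_{Z(\Df)^\sigma} \End_\Df(\V)^x\,.
\]
Combining with the inequality $\dim_{Z(\Df)} \End_\Df(\V)^x \ge \deg \End_\Df(\V)$ from Lemma \ref{EndD1}, and multiplying by the factor $\dim_{Z(\Df)^\sigma} Z(\Df)$ to convert dimensions, produces the required lower bound.

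For the equality statement, the equality in our bound is equivalent to equality in Lemma \ref{EndD1}, which in turn is equivalent to $\End_\Df(\V)^x = Z(\Df)[x]$; in that case Lemma \ref{EndD1} already tells us $Z(\Df)[x]$ is a field, and passing to $\sigma = -1$ parts yields $\g^x = Z(\Df)[x]_{\sigma=-1}$. Conversely, $Z(\Df)[x]$ is $\sigma$-stable since $\sigma$ preserves $Z(\Df)$ (as the center of $\End_\Df(\V)$) and sends $x$ to $-x$; it contains the invertible element $x$, so the same eigenspace-swapping argument gives $\dim_{Z(\Df)^\sigma} Z(\Df)[x]_{\sigma=-1} = \tfrac{1}{2}\dim_{Z(\Df)^\sigma} Z(\Df)[x]$. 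If moreover $\g^x = Z(\Df)[x]_{\sigma=-1}$, then comparing the two halved dimensions forces $\End_\Df(\V)^x = Z(\Df)[x]$, and Lemma \ref{EndD1} delivers both equality in the bound and the field property.

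The one subtle point is the invertibility of $x$; once that is secured from $x$-irreducibility, the rest of the argument is essentially bookkeeping with the $\sigma$-eigenspace decomposition, and reduces cleanly to Lemma \ref{EndD1}.
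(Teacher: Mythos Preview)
Your proof is correct and follows essentially the same approach as the paper: use $x$-irreducibility to get $x$ invertible, observe that multiplication by $x$ swaps the $\sigma=\pm 1$ eigenspaces of $\End_\Df(\V)^x$ (and of $Z(\Df)[x]$), and then reduce to Lemma~\ref{EndD1}. Your treatment of the converse direction is in fact slightly more explicit than the paper's, which asserts the equivalence without spelling out the dimension comparison.
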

\begin{prf}
Since $\sigma(x)=-x$, both $Z(\Df)[x]$ and $\End_\Df(\V)^x$ are $\sigma$-invariant.
Since $\Ker(x)=0$, multiplication by $x$ is an $Z(\Df)^\sigma$-linear isomorphism between $\End_\Df(\V)^x_{\sigma=-1}$  and $\End_\Df(\V)^x_{\sigma=1}$ and similarly between $Z(\Df)[x]_{\sigma=-1}$  and $Z(\Df)[x]_{\sigma=1}$. 
Hence
\[
\dim_{Z(\Df)^\sigma}Z(\Df)[x]_{\sigma=-1}=\frac{1}{2}\dim_{Z(\Df)^\sigma}Z(\Df)[x]
\]
and
\[
\dim_{Z(\Df)^\sigma}\End_\Df(\V)^x_{\sigma=-1}=\frac{1}{2}\dim_{Z(\Df)^\sigma} \End_\Df(\V)^x  =  \frac{1}{2}\dim_{Z(\Df)^\sigma} Z(\Df) \cdot \dim_{Z(\Df)} \End_\Df(\V)^x \,.
\]
By Lemma \ref{EndD1}
\begin{equation} \label{eqlem2}
\dim_{Z(\Df)}\End_\Df(\V)^x \geq \deg \End_\Df(\V).
\end{equation}
Since $\g^x= \End_\Df(\V)^x_{\sigma=-1}$,
this gives the inequality in the lemma. Equality in the lemma holds if and only if \eqref{eqlem2} is an equality. 
By Lemma \ref{EndD1} again this happens if and only if
\[
\End_\Df(\V)^x=Z(\Df)[x].
\]
In addition $Z(\Df)[x]$ is a field.
By the above computation, this is equivalent to
\[
\g^x =  \End_\Df(\V)^x_{\sigma=-1}=Z(\Df)[x]_{\sigma=-1}\,,
\]
\end{prf}
\begin{lem}\label{gD2}
Let $x\in\g$ be semisimple as an element of $\End_\Df(\V)$. Then there is a direct sum decomposition
\begin{equation}\label{gD2.1}
\V=\bigoplus_{j\in J}(\V_j\oplus \V_{j'}) \, \oplus \, \bigoplus_{l\in L}\V_l \, \oplus \V_0\,,
\end{equation}
where $0\notin J\cup L$, 
\begin{enumerate}[(i)]
\item each  subspace $\V_j$, $\V_{j'}$, $\V_l$ is $x$-irreducible over $\Df$, 
\item the restrictions of the form $(\cdot,\cdot)$ to $\V_j$ and to $\V_{j'}$ are zero, but the restriction to $\V_j \oplus \V_{j'}$ is non-degenerate,
\item the restriction to each $\V_l$ is non-degenerate and
\item the spaces $\V_j\oplus\V_{j'}$, $\V_l$, $\V_0$ are mutually orthogonal.
\item The space $\V_0 \neq 0$ if and only if $\Df$ is commutative, the involution $\sigma$ is trivial, the form $(\cdot, \cdot)$ is symmetric and the dimension of $\V$ over $\Df$ is odd. In this case $\dim_\Df\V_0=1$ and $x|_{\V_0} = 0$.
\end{enumerate}
\end{lem}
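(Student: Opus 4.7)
The plan is to exploit $\sigma(x) = -x$ to reduce to a sum of orthogonal $x$-stable pieces, then decompose each into $x$-irreducibles. First, $\ker x \perp \im x$: for $u \in \ker x$ and $v \in \V$,
\[
(u, x v) = (\sigma(x) u, v) = -(x u, v) = 0,
\]
so by non-degeneracy and the semisimplicity of $x$ one has $\V = \ker x \oplus \im x$, with each summand carrying a non-degenerate restriction of the form.

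Second, decompose $\im x = \bigoplus_P \Uv_P$ into $x$-isotypic components, indexed by monic irreducible polynomials $P \in Z(\Df)[t]$ with $P(0)\neq 0$, where $\Uv_P = \ker P(x)\cap \im x$. Define a ``dual'' polynomial $P^\star$ by normalizing $\sum_k \sigma(a_k)(-t)^k$ to be monic, when $P(t)=\sum_k a_k t^k$. A direct calculation using $\sigma(x)=-x$ and the $\sigma$-sesquilinearity of the form shows that $(\Uv_P,\Uv_Q) = 0$ unless $Q=P^\star$, while $P\mapsto P^\star$ is an involution on monic irreducibles with non-zero constant term.

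Third, split the analysis of $\im x$. If $P \neq P^\star$, then $\Uv_P$ and $\Uv_{P^\star}$ are each totally isotropic and pair non-degenerately; decompose $\Uv_P$ into $x$-irreducibles $\V_j$ via Lemma \ref{EndD2} and use the pairing to select dual $x$-irreducibles $\V_{j'}\subseteq \Uv_{P^\star}$, yielding pieces satisfying (i), (ii), (iv). If $P = P^\star$, the form restricts non-degenerately to $\Uv_P$, becoming a hermitian (or skew-hermitian) form over the larger ring $Z(\Df)[x|_{\Uv_P}]$; an inductive argument peels off orthogonal $x$-irreducibles with non-degenerate restriction, giving $\V_l$-pieces, with the remaining ones assembled into hyperbolic pairs $\V_j\oplus \V_{j'}$ when a non-degenerate $x$-irreducible subspace cannot be found on a given step.

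Finally, treat $\ker x$. Here $x$ acts by zero so every $\Df$-line is $x$-irreducible, and the problem reduces to decomposing a non-degenerate $\sigma$-hermitian (or skew) form on a $\Df$-module into $1$-dimensional non-degenerate pieces ($\V_l$) and hyperbolic pairs ($\V_j\oplus \V_{j'}$), possibly with a residual $\V_0$. The heart of the argument is (v), which I expect to be the main obstacle. It follows from the trichotomy in \eqref{theinclusions} together with the standard orthogonal diagonalization of hermitian forms in characteristic zero: whenever $\Df$ is non-commutative, or $\sigma$ is non-trivial, or the form is skew, the extra flexibility (for instance the non-trivial skew-traces $\{a-\sigma(a)\}$) lets one diagonalize $\ker x$ entirely into $\V_l$-pieces and hyperbolic pairs, so $\V_0 = 0$. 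In the orthogonal symmetric-bilinear case the parity identity $\dim_\Df \im x$ even (coming from the involution $P\leftrightarrow P^\star$ and the fact that in this setting self-dual $P$ is an even polynomial of even degree) forces $\dim_\Df \ker x$ to share the parity of $\dim_\Df \V$; when $\dim_\Df \V$ is odd, a single $1$-dimensional anisotropic residual summand $\V_0$ remains.
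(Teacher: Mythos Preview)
Your argument is correct and considerably more thorough than the paper's own proof, which is a three-sentence sketch: decompose $\V$ into $x$-irreducibles, group the isotropic ones into pairs for $J$, put the non-degenerate ones into $L$, ``and eventually $0$''. In particular the paper does not address~(v). Your route through the isotypic decomposition $\im x = \bigoplus_P \Uv_P$ and the involution $P \leftrightarrow P^\star$ is a genuinely different and cleaner organization: it explains \emph{why} isotropic irreducibles pair up (either $P \neq P^\star$, so the whole block $\Uv_P$ is isotropic and dual to $\Uv_{P^\star}$, or the block is self-dual and one runs a Witt-type induction there), and it yields the parity of $\dim_\Df \im x$ in the orthogonal case directly, which is what~(v) needs.

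One small imprecision worth flagging: your phrase ``a single $1$-dimensional anisotropic residual summand $\V_0$ remains'' suggests that $\V_0 \neq 0$ is forced in the odd orthogonal case, but in fact every anisotropic line of $\ker x$ could equally well be placed into $L$. The content of~(v) is rather that one \emph{chooses} to reserve one such line as $\V_0$; your parity argument shows $\dim_\Df\ker x$ is odd, hence (the form being symmetric and non-degenerate in characteristic~$0$) an anisotropic line exists to play this role. This convention is exactly what makes the right-hand side of the inequality in Lemma~\ref{gD3} come out to the rank of $\g$.
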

\begin{prf}
We pick a decomposition of $\V$ into  $x$-irreducibles, group together resulting isotropic subspaces which are paired via the form, and index them by a set $J$. The remaining subspaces have the property that the restriction of the form to any of them is non-degenerate. They are indexed by a set $L$, and eventually $0$. 
\end{prf}
\begin{lem}\label{gD3}
Let $x\in\g$ be semisimple as an element of $\End_\Df(\V)$. Then, in terms of \eqref{gD2.1},
\begin{equation}\label{gD3.1}
\dim_{Z(\Df)^\sigma} \g^x\geq \frac{1}{2}\cdot \dim_{Z(\Df)^\sigma}{Z(\Df)}\cdot\left(\deg\End_\Df(\V)-\dim_\Df \V_0\right)\,.
\end{equation}
The equality occurs if and only if, in terms of \eqref{gD2.1},
\begin{equation}\label{gD3.2}
\g^x=\bigoplus_{j\in J}{Z(\Df)}[x|_{\V_j+\V_{j'}}]_{\sigma=-1} 
\oplus \bigoplus_{l\in L} {Z(\Df)}[x|_{\V_l}]_{\sigma=-1}\,.
\end{equation}
Each ${Z(\Df)}[x|_{\V_j+\V_{j'}}]$ and each ${Z(\Df)}[x|_{\V_l}]$ is a field extension of ${Z(\Df)^\sigma}$. Furthermore, each ${Z(\Df)}[x|_{\V_j+\V_{j'}}]_{\sigma=-1}$ is isomorphic to ${Z(\Df)^\sigma}[x|_{\V_j}]$ by restriction to $\V_j$.
\end{lem}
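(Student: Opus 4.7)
The strategy is to use the orthogonal decomposition \eqref{gD2.1} to reduce the statement, block by block, to the already established Lemmas \ref{EndD1} and \ref{gD1}, and then recombine the pieces. The orthogonality of \eqref{gD2.1} gives a $\sigma$-stable block-diagonal subalgebra
\[
\Ms := \bigoplus_{j\in J}\End_\Df(\V_j\oplus \V_{j'}) \,\oplus\, \bigoplus_{l\in L}\End_\Df(\V_l) \,\oplus\, \End_\Df(\V_0)
\]
of $\End_\Df(\V)$ that contains $x$, so $(\Ms^x)_{\sigma=-1} \subseteq \g^x$, and the block decomposition of $\Ms^x$ lets us treat each summand independently. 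The $\V_0$-block contributes nothing, because in the only case of Lemma \ref{gD2} allowing $\V_0 \ne 0$, one has $\End_\Df(\V_0)_{\sigma=-1}=0$ and $\deg\End_\Df(\V_0)=\dim_\Df\V_0$.

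For a pair block, the nondegenerate pairing between $\V_j$ and $\V_{j'}$ induces a $Z(\Df)^\sigma$-linear isomorphism $\tau\colon\End_\Df(\V_j)\to\End_\Df(\V_{j'})$ such that $\sigma$ exchanges the two summands via $\tau$ and $\tau(x|_{\V_j})=-x|_{\V_{j'}}$; hence $(\End_\Df(\V_j)^x\oplus\End_\Df(\V_{j'})^x)_{\sigma=-1}=\{(y,-\tau(y)):y\in\End_\Df(\V_j)^x\}$ is $Z(\Df)^\sigma$-isomorphic to $\End_\Df(\V_j)^x$. Lemma \ref{EndD1} applied to the $x$-irreducible $\V_j$ then gives
\[
\dim_{Z(\Df)^\sigma}(\End_\Df(\V_j)^x\oplus\End_\Df(\V_{j'})^x)_{\sigma=-1}
= \dim_{Z(\Df)^\sigma}Z(\Df)\cdot \dim_{Z(\Df)}\End_\Df(\V_j)^x
\geq \dim_{Z(\Df)^\sigma}Z(\Df)\cdot\deg\End_\Df(\V_j).
\]
For each anisotropic block, Lemma \ref{gD1} applied to $x|_{\V_l}$ gives
\[
\dim_{Z(\Df)^\sigma}(\End_\Df(\V_l)^x)_{\sigma=-1}\geq \frac{1}{2}\dim_{Z(\Df)^\sigma}Z(\Df)\cdot\deg\End_\Df(\V_l).
\]
Summing, using $\deg\End_\Df=\dim_\Df\cdot\deg\Df$ together with $\deg\End_\Df(\V_j\oplus\V_{j'})=2\deg\End_\Df(\V_j)$, yields \eqref{gD3.1}.

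For the equality direction, tightness in \eqref{gD3.1} forces tightness in every application of Lemmas \ref{EndD1} and \ref{gD1} and the vanishing of every off-diagonal Hom contribution to $\g^x$ — both between distinct summands of \eqref{gD2.1} and between $\V_j$ and $\V_{j'}$ inside a pair block. Tight Lemma \ref{EndD1} gives $\End_\Df(\V_j)^x=Z(\Df)[x|_{\V_j}]$, a field; via the $\tau$-exchange identification this realises the $\sigma=-1$ part as $Z(\Df)[x|_{\V_j+\V_{j'}}]_{\sigma=-1}$, which by restriction to $\V_j$ maps isomorphically onto the $Z(\Df)^\sigma$-subalgebra $Z(\Df)^\sigma[x|_{\V_j}]$. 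Tight Lemma \ref{gD1} similarly gives the $\V_l$-summands of \eqref{gD3.2}, with $Z(\Df)[x|_{\V_l}]$ a field. The main obstacle is the equality direction: one must argue that any nontrivial off-diagonal contribution corresponds to an $x$-intertwining between distinct irreducible constituents, and such an intertwining would produce extra centralizer dimension beyond the block-diagonal lower bound — so tightness rules it out; one must also track carefully how $\sigma$ interacts with $Z(\Df)/Z(\Df)^\sigma$ under restriction to $\V_j$ to establish the final field and restriction-isomorphism claims.
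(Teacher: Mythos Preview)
Your approach is essentially the paper's: reduce along the orthogonal decomposition of Lemma~\ref{gD2}, use the inclusion $\g(\V_j\oplus\V_{j'})^x \supseteq \End_\Df(\V_j)^x$ for each isotropic pair (your $\tau$-exchange makes this explicit, whereas the paper just says ``by restriction to $\V_j$''), and apply Lemmas~\ref{EndD1} and~\ref{gD1} blockwise. The paper's argument is much terser and leaves the equality direction and the field/restriction claims entirely implicit, so the extra care you flag for those points is warranted rather than a gap; your ``main obstacle'' is handled exactly as you say, by the standard observation that tightness in a chain of inequalities forces tightness at every link and kills the off-diagonal $\Hom$ terms.
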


\begin{prf} Recall that $\deg \End_\Df(\V) = \dim_{\Df} \V$.
Since
\[
\g^x=\g(\V)^x\supseteq\bigoplus_{j\in J}\g(\V_j\oplus \V_{j'})^x\oplus \bigoplus_{l\in L}\g(\V_l)^x
\]
and 
\[
\g(\V_j\oplus \V_{j'})^x\supseteq \End_\Df(\V_j)^x
\] 
by restriction to $\V_j$ the claim follows from Lemmas \ref{EndD1} and \ref{gD1}.
(The fraction $\frac{1}{2}$ in \eqref{gD3.1} is necessary because we are dealing with the space $\V_j \oplus \V_{j'}$.) 
\end{prf}
The centralizer of a Cartan subalgebra in the group $\G$ is called a Cartan subgroup $\H$. In the above terms
\begin{equation}\label{csginGlD}
\H=\G^x=\prod_{j\in J}{Z(\Df)}[x|_{\V_j+\V_{j'}}]^\times_{\sigma=\inv} \times \prod_{l\in L} {Z(\Df)}[x|_{\V_l}]^\times_{\sigma=\inv}\,.
\end{equation}
Set 
\[
\X=\sum_{j\in J}\V_j\,,\ \ \ \Y=\sum_{j\in J}\V_j'\,,\ \ \ \V^{ell}=\sum_{l\in L}\V_l\,.
\]
Then we have a direct sum decomposition preserved by $\H$,
\begin{equation}\label{direct sum decomposition preserved by}
\V=\X\oplus \V^{ell}\oplus \Y\,,
\end{equation}
such that $\X$, $\Y$ are complementary isotropic subspaces, $\V^{ell}\subseteq \V$ is the orthogonal complement of $\X+\Y$ and
$\H|_{\V^{ell}}$ is a compact Cartan subgroup in the isometry group of the restriction of the form $(\cdot,\cdot)$ to $\V^{ell}$. 
In particular, $\H$ does not preserve any isotropic subspace of $\V^{ell}$. 
Also, $\H|_{\X}\subseteq \GL(\X)$ is a Cartan subgroup which determines $\H|_{\X+\Y}$ via the fact that $\Y$ may be viewed as the linear dual of $\X$.
We shall refer to the split part of $\H|_{\X+\Y}$ as to the split part of $\H$ and denote it by $\A$. Then $\H/\A$ is compact. 

\subsection{\bf The Weyl Harish-Chandra integration formula.}\label{The Weyl Harish-Chandra integration formula}
\ One important conclusion from the description of Cartan subgroups in sections \ref{A general linear group} and \ref{An isometry group} is that the number of conjugacy classes of them is finite. This is stated without any proof or reference in \cite[page 87]{Harish-Chandra-van-Dijk}. For semisimple groups, a proof is given in \cite[Sect. 4.2]{MorrisRational}. We will provide a proof below for $\GL_n(\F)$ and for certain classical groups.

Suppose $\G = \GL_n(\F)$. Since the number of isomorphic classes of unramified extensions of $\F$ of a fixed degree $n$ is finite, \cite[Proposition 7.50]{milneANT}, and the number of totally ramified extensions is also finite, \cite{Serre_formule_de_masse}, the number of conjugacy classes of Cartan subgroups in $\GL_n(\F)$ is finite. A complete argument is in \cite{KrasnerFinite}.

Next we suppose $\Df$ is a commutative field and $\G$ is the isometry group on the vector space $\V$ over $\Df$ as defined in \eqref{eqisometrygroup}. In particular $\G = \GL(\V)_{\sigma=\inv}$.  We could show that the number of conjugacy classes of Cartan subgroups is finite in the following way: Let $\T$ be a Cartan subgroup in $\G$. Then $C = Z_{\GL(\V)}(\T)$ is a Cartan subgroup of $\GL(\V)$. In particular $C = \prod E_i^\times$ where $E_i$ is a  field extension of $\Df$. Moreover $\T = C_{\sigma=\inv}$. This shows that up to isomorphism over $\F$, there are only finitely many Cartan subgroups in $\G$. 
Suppose $\T'$ be another Cartan which is isomorphic to $\T$. Then $C' = Z_{\GL(\V)}(\T')$ is isomorphic to $C$. We could define Hermitian forms on $C$ and $C'$. By adjusting the Hermitian forms we have $\V \simeq C \simeq C'$ as hermitian spaces. By Witt's theorem there exists $g \in \G$ such that $gC g^{-1} = C'$. By taking $\sigma$ invariants, we get $g \T g^{-1} = \T'$. This proves that there are finitely many conjugacy classes of Cartan subgroups in $\G$.

\medskip

The set of the $x$ in $\g$ where $\eta(x)\ne 0$ is Zariski dense. Since $\g$ is Zariski connected, the complement is Zariski closed. Hence, by \cite[(2.5.3) Proposition (i)]{Margulis_book_Discrete_subgroups}, it does not contain any non-empty subset which is open in the p-adic topology. Therefore the set of the regular semisimple elements is dense  in the p-adic topology in $\g$. Hence the union of the conjugacy classes passing through the Cartan subalgebras of $\g$ is dense in $\g$.

Recall the Weyl - Harish-Chandra integration formula on the Lie algebra $\g$,
\begin{equation}\label{Weyl - Harish-Chandra integration formula on g'}
\int_{\g}\psi(x)\,dx=\sum_{\H}\frac{1}{|W(\H)|}\int_{\reg{\h}}|\eta(x)|\int_{\G/\H}\psi(x^{g})\,d(g\H)\,dx \qquad (\psi\in \Ss(\g))\,.
\end{equation}
Here the summation is over a maximal family of mutually non-conjugate Cartan subgroups $\H\subseteq \G$, $W(\H)$ is the Weyl group equal to the quotient of the normalizer of $\H$ in $\G$ by $\H$, $dx=\,d(g\H)\,dx$, $x^g=gxg^{-1}$
see  \cite[pages 86-88]{Harish-Chandra-van-Dijk}.

Let $\G^0\subseteq \G$ denote the Zariski identity component and let $Z\subseteq \G$ denote the center of $\G$. The Cayley transform $x\mapsto (1+x)(1-x)^{-1}$ is a birational isomorphism between the Lie algebra $\g$ and the group $\G^0$. Hence the set of the regular semisimple elements in $\G^0$ is Zariski open. If $\G^0$ is Zariski connected then this set is also dense. Hence, by \cite[(2.5.3) Proposition (i)]{Margulis_book_Discrete_subgroups}, it is dense in the p-adic topology. Therefore the union of the all conjugacy classes of Cartan subgroups is dense in $Z\G^0$. (The Cayley transform is not necessary if $\G$ is a general linear group.) Thus we have 
the Weyl - Harish-Chandra integration formula on the group $Z\G^0$,
\begin{equation}\label{Weyl - Harish-Chandra integration formula on G}
\int_{\G}\Psi(x)\,dx=\sum_{\H}\frac{1}{|W(\H)|}\int_{\reg{\H}}|D(x)|\int_{\G/\H}\Psi(x^{g})\,d(g\H)\,dx \qquad (\Psi\in C_c(Z\G^0))\,.
\end{equation}
Here $D(x)$ is the coefficient of $t^l$ in the polynomial (cf. \eqref{functioneta})
\[
\det(t+1-\Ad(x))
\]
in the indeterminate $t$. From the classification of dual pairs we know that $Z\G^0=\G$ unless $\G$ is an even orthogonal group.

\subsection{\bf Some geometry of moment maps for dual pairs of type II} \label{Some geometry of moment maps for irreducible dual pairs of type II}
\  
Let $\V$ be a finite dimensional right $\Df$-vector space. 
For $x\in \End_\Df(\V)$ let $\tr(x)$ be the trace of $x$ viewed as an element of $\End_\F(\V)$. Then
\begin{equation}\label{regulartrace}
\tr\in \Hom_\F(\End_\Df(\V),\F)
\end{equation}
and in terms of \cite[page 53]{Weil_Basic},
\begin{equation}\label{oregulartraceII}
\tr_{{\mathrm{End}}_\F(\V)/\F}(x)=\dim_\Df(\V)\cdot \tr(x) \qquad (x\in \End_\Df(\V))\,.
\end{equation}
Since the trace of the identity map is not zero, we see that $\tr\ne 0$. (Here we need the assumption that the characteristic of $\F$ is zero.)
For $x, y \in \End_\Df(\V)$, define $t(x)(y) = \tr(xy) \in \F$. This gives a  nonzero map
\begin{equation}\label{identification_with_the_dual_II}
\End_\Df(\V) \rightarrow \Hom_\F(\End_\Df(\V),\F)\,,\ \ x \mapsto t(x)(-).
\end{equation}
The kernel is a two-sided ideal. The algebra $\End_\Df(\V)$ is simple so this ideal is zero. Thus \eqref{identification_with_the_dual_II} is an injection. It is a linear bijection by dimension count.

Let $\V'$ be another finite dimensional right $\Df$-vector space. We define a map
\begin{equation}\label{anotheridentification_with_the_dual_II}
T \colon \Hom_\Df(\V,\V') \rightarrow \Hom_\F(\Hom_\Df(\V',\V),\F)
\end{equation}
by
\[
T(y)(x) = \tr(xy)
\]
where $y \in \Hom_\Df(\V,\V')$ and $x \in \Hom_\Df(\V',\V)$.
Since $\Hom_\Df(\V',\V)$ is a simple $\End_\Df(\V)\times\End_\Df(\V')$-module, $\ker T = 0$. Let
\begin{equation}\label{symplectic_space_II}
\W=\Hom_\Df(\V',\V)\oplus \Hom_\Df(\V,\V')\,.
\end{equation}
The fact that $\ker T = 0$ implies that
\begin{equation}\label{symplectic_form_II}
\langle w,w'\rangle=\tr(xy')-\tr(x'y)\qquad(w=(x,y),\ w=(x',y')\in \W)
\end{equation}
defines a non-degenerate symplectic form on $\W$.

For any $\F$-subspace $\mathfrak e\subseteq \sp(\W)$, we set $\mathfrak e^*=\Hom(\mathfrak e, \F)$. We recall the unnormalized moment map
\begin{eqnarray}\label{moment_map_II}
&&\tau_{\mathfrak e^*}:\W\to \mathfrak e^*\,,\\
&&\tau_{\mathfrak e^*}(w)(e)=\langle e(w),w\rangle \qquad (e\in \mathfrak e, w\in \W)\,.\nn
\end{eqnarray}
Let $\g=\End_\Df(\V)$ and let $\g'=\End_\Df(\V')$. Define
\begin{eqnarray}\label{moment_maps_II}
&&\tau_\g \colon \W\to \g\,,\ \ \ \tau_{\g'} \colon \W\to \g'\,,\\
&&\tau_\g(x,y)=xy\,,\ \ \ \tau_{\g'}(x,y)=yx \qquad (x\in \Hom_\Df(\V',\V),\ y\in \Hom_\Df(\V,\V'))\,.\nn
\end{eqnarray}
Then under the identification \eqref{identification_with_the_dual_II} we have 
\begin{equation}\label{identification_of_moment_maps_II}
\tau_{\g^*}=2\tau_\g\,,\ \ \ \tau_{\g'{}^*}=2\tau_{\g'}\,.
\end{equation}
The groups $\G=\GL_\Df(\V)$ and $\G'=\GL_\Df(\V')$ act on $\W$ by pre-multiplication and post-multiplication by the inverse. These actions preserve the symplectic form. 
The moment maps \eqref{identification_of_moment_maps_II} intertwine these actions with the corresponding co-adjoint and adjoint actions. 

Let $\H' \subseteq \G' = \GL_\Df(\V')$ be a Cartan subgroup with the $\F$-split component $\A'\subseteq \H'$. Let
\begin{eqnarray}\label{A'isotypicdecomposition}
\V'=\V_1'\oplus\V'_2\oplus \ldots
\end{eqnarray}
be the decomposition of $\V'$ into $\A'$-isotypic components. 
Then the symplectic space  
decomposes into a direct sum of mutually orthogonal subspaces:
\begin{equation}\label{A'isotypicdecompositionW}
\W=\W_1\oplus \W_2\oplus \ldots \,,
\end{equation}
where $\W_j=\Hom_\Df(\V'_j,\V)\oplus\Hom_\Df(\V,\V'_j)$. 

Let $\A''\subseteq \Sp(\W)$ denote the centralizer of $\A'$.
Then $\A''$ preserves the decomposition \eqref{A'isotypicdecompositionW} and the restrictions to each maximal isotropic subspace $\Hom_\Df(\V'_j,\V)$ yield the following isomorphisms:
\begin{eqnarray}\label{A''a''}
\a''&=&\End_\F(\Hom_\Df(\V'_1,\V))\oplus \End_\F(\Hom_\Df(\V'_2,\V))\oplus \ldots\\
\A''&=&\GL_\F(\Hom_\Df(\V'_1,\V))\times \GL_\F(\Hom_\Df(\V'_2,\V))\times \ldots \,.\nn
\end{eqnarray}
Thus the centralizer $\A'''$ of $\A''$ in $\Sp(\W)$ is equal to $\A'$ and, as a reductive dual pair, $(\A'', \A''') = (\A'', \A')$ is isomorphic to
\begin{eqnarray}\label{A''A'''}
(\GL_{n_1}(\F), \GL_1(\F))\times (\GL_{n_2}(\F), \GL_1(\F))\times \ldots \,,
\end{eqnarray}
where $n_j=\dim_\F\Hom_\Df(\V'_j,\V)$. 

Let $\h'$ be the Lie algebra of $\H'$.  
We see from \eqref{csginglD} that $\H'\subseteq \h'$ and from~\eqref{CSAin II} that $\h'\subseteq \End_\Df(\V')$ is also an associative subalgebra over $Z(\Df)$. Denote by $\reg{\h'}\subseteq \h'$ and by $\reg{\H'}\subseteq \H'$ the subsets of the regular elements.
\begin{lem}\label{regular elements generate}
Let $x'\in \reg{\h'}$ or $x'\in \reg{\H'}$. 
Then the elements $1$,  $x'$, $x'{}^2$, $x'{}^3$, ... span the associative algebra $\h'$ over $Z(\Df)$.
\end{lem}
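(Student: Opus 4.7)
The plan is to combine the structural description $\h' = \Ef_1 \oplus \ldots \oplus \Ef_n$ from \eqref{csainglD} with the Chinese Remainder Theorem, using regularity to secure pairwise coprimality of minimal polynomials.

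First I would write $x' = (x'_1, \ldots, x'_n) \in \bigoplus_{j=1}^n \Ef_j = \h'$ and let $p_j(t) \in Z(\Df)[t]$ denote the minimal polynomial of $x'_j$ over $Z(\Df)$. Since each $Z(\Df)[x'_j]$ sits inside the field $\Ef_j$, it is an integral domain and hence $p_j$ is irreducible, with $\deg p_j = \dim_{Z(\Df)} Z(\Df)[x'_j] \leq \dim_{Z(\Df)} \Ef_j$. The inclusion $Z(\Df)[x'] \subseteq \h'$ is automatic, since $\h'$ is commutative and contains $x'$.

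The heart of the argument is to deduce from regularity that (i) $Z(\Df)[x'_j] = \Ef_j$ for every $j$, and (ii) the polynomials $p_1, \ldots, p_n$ are pairwise distinct. For this I would use the chain
\[
\g'^{x'} \;\supseteq\; \bigoplus_{j=1}^n \End_\Df(\V'_j)^{x'_j} \;\supseteq\; \bigoplus_{j=1}^n \Ef_j \;=\; \h' \,,
\]
with equality $\g'^{x'} = \h'$ forced by regularity (for semisimple $x'$ one has $\g'_{0,\mathrm{ad}\,x'} = \g'^{x'}$, and $\dim_\F \g'^{x'} = l = \dim_\F \h'$). By the double centralizer theorem in the central simple algebra $\End_\Df(\V'_j)$ one has $\dim_{Z(\Df)} \End_\Df(\V'_j)^{x'_j} = (\deg \End_\Df(\V'_j))^2 / \dim_{Z(\Df)} Z(\Df)[x'_j]$, so failure of (i) strictly enlarges the $j$-th block beyond $\Ef_j$, and hence $\g'^{x'}$ beyond $\h'$. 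Failure of (ii), say $p_j = p_k$, produces via the isomorphism $Z(\Df)[x'_j] \cong Z(\Df)[x'_k]$ a non-zero $\Df$-linear intertwiner $\V'_j \to \V'_k$ carrying $x'_j$ to $x'_k$, which viewed as an off-diagonal element of $\g'$ lies in $\g'^{x'} \setminus \h'$. Either conclusion contradicts $\g'^{x'} = \h'$.

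Granting (i) and (ii), the minimal polynomial of $x'$ on $\V'$ equals $p_1 p_2 \cdots p_n$, and the Chinese Remainder Theorem yields
\[
Z(\Df)[x'] \;\cong\; Z(\Df)[t]/(p_1 \cdots p_n) \;\cong\; \bigoplus_{j=1}^n Z(\Df)[t]/(p_j) \;\cong\; \bigoplus_{j=1}^n \Ef_j \;=\; \h' \,,
\]
and a $Z(\Df)$-dimension count upgrades the inclusion $Z(\Df)[x'] \subseteq \h'$ to equality. The group case $x' \in \reg{\H'}$ proceeds identically, with $x'_j \in \Ef_j^\times$ and regularity in the group likewise forcing $\g'^{x'} = \h'$. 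The main obstacle is step (ii): producing the explicit off-diagonal intertwiner witnessing the extra centralizer when $p_j = p_k$. Once (i) and (ii) are in place, the Chinese Remainder Theorem together with the dimension count is routine.
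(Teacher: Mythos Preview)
Your argument is correct and is essentially a detailed unpacking of the paper's one-line proof (``This is immediate from \eqref{csainglD}''): applying \eqref{csainglD} and Lemma~\ref{EndD2} to the regular element $x'$ itself gives $\h'=\End_\Df(\V')^{x'}=\bigoplus_j Z(\Df)[x'_j]$ with each summand a field, and the diagonal form \eqref{EndD2.4} of the centralizer is precisely what guarantees the $p_j$ are pairwise distinct so that the Chinese Remainder step goes through. For the step (ii) you flag as the obstacle, the clean justification is that $p_j=p_k$ makes both $\V'_j$ and $\V'_k$ simple modules over the central simple $Z(\Df)[t]/(p_j)$-algebra $(Z(\Df)[t]/(p_j))\otimes_{Z(\Df)}\Df^{\mathrm{op}}$, hence isomorphic, which produces the nonzero off-diagonal element of $\g'^{x'}$ contradicting \eqref{EndD2.4}.
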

\begin{prf} This is immediate from \eqref{csainglD}.
\end{prf}
We shall identify the tangent bundle $T\a''$ with $\a''\times\a''$ and the cotangent bundle $T^*\a''$ with $\a''\times\a''{}^*$ as usual. Given $x'\in \h'$ there is an embedding
\begin{equation}\label{geometricII.2}
\iota_{\g, x'}:\g\ni x\to x+x'\in\a''\,.
\end{equation}
The conormal bundle to this embedding is equal to
\begin{equation}\label{geometricII.2'}
N_{\iota_{\g, x'}}=\{(x+x',\xi);\ x\in\g\,,\ \xi\in \a''{}^*\,,\ \xi|_\g=0\}
\end{equation}
Similarly, given $h'\in \H'$ there is an embedding
\begin{equation}\label{geometricII.2_1}
\iota_{\G,  h'}:\G\ni g\to gh'\in\A''\,.
\end{equation}
The conormal bundle to this embedding is equal to
\begin{equation}\label{geometricII.2'_1}
N_{\iota_{\G, h'}}=\{(gh',\xi);\ g\in\G\,,\ \xi\in \a''{}^*\,,\ \xi|_\g=0\}
\end{equation}
\begin{lem}\label{geometricII}
Let
\begin{eqnarray}\label{geometricII.1}
S_{\a''}&=&\{(x,\tau_{\a''{}^*}(w))\in \a''\times(\a''{}^*\setminus \{0\});\ x(w)=0\,,\ w\in \W\}\,,\\
S_{\A''}&=&\{(g,\tau_{\a''{}^*}(w))\in \A''\times(\a''{}^*\setminus \{0\});\ g(w)=-w\,,\ w\in \W\}\,.\nn
\end{eqnarray}
Fix an element $x'\in\reg{\h'}$ and $h'\in\reg{\H'}$. Then $S_{\a''}\cap N_{\iota_{\g, x'}}=\emptyset$
and $S_{\A''}\cap N_{\iota_{\G, h'}}=\emptyset$.
\end{lem}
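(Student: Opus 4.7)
The plan is to argue by contradiction via a unified strategy for both statements; I focus on the Lie algebra case and indicate the adaptation at the end. Suppose $(x_0+x',\xi)\in S_{\a''}\cap N_{\iota_{\g,x'}}$ with $x_0\in\g$. There is then $w=(a,b)\in\W$ such that $(x_0+x')(w)=0$, $\xi=\tau_{\a''{}^*}(w)\ne 0$, and $\xi|_\g=0$. Unraveling the infinitesimal actions of $\g$ and $\h'$ on $\W$ gives the intertwining relations
\[
x_0 a = a x',\qquad x' b = b x_0,
\]
while a direct calculation from \eqref{symplectic_form_II} shows that $\xi(x_0)=2\tr(x_0\, ab)$ for $x_0\in\g$; combined with the non-degeneracy of the trace pairing \eqref{identification_with_the_dual_II}, this forces $ab=0$ in $\End_\Df(\V)$.

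Next, decompose $a=\sum_j a_j$ and $b=\sum_j b_j$ along $\V'=\bigoplus_j\V'_j$, so that $x_0 a_j=a_j x'_j$, $x'_j b_j=b_j x_0$, and $\sum_j a_j b_j=0$. Let $p_j\in Z(\Df)[t]$ be the (irreducible) minimal polynomial of $x'_j$ on $\V'_j$. The regularity of $x'$ in $\g'$ forces the $p_j$'s to be pairwise distinct: if $p_i=p_j$ for some $i\ne j$, a nonzero $\Df$-linear intertwiner $\V'_j\to\V'_i$ would exist, enlarging $(\g')^{x'}$ strictly beyond $\h'$. Consider the $Z(\Df)[x_0]$-primary decomposition $\V=\bigoplus_q\V^{(q)}$, where $\V^{(q)}=\ker(q(x_0)^N)$ for $N\gg 0$. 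From $p_j(x_0)a_j=0$ one has $\im(a_j)\subseteq\V^{(p_j)}$, and from $b_j\, p_j(x_0)=0$ together with the invertibility of $p_j(x_0)$ on each $\V^{(q)}$ with $q\ne p_j$ (by coprimality of distinct irreducibles), one has $\ker(b_j)\supseteq\bigoplus_{q\ne p_j}\V^{(q)}$. Hence $a_k b_k$ vanishes on $\V^{(p_j)}$ for $k\ne j$, so $ab|_{\V^{(p_j)}}=a_j b_j|_{\V^{(p_j)}}$, and $ab=0$ yields $a_j b_j|_{\V^{(p_j)}}=0$ for every $j$.

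The $x'_j$-irreducibility of $\V'_j$ now closes the argument: if $a_j\ne 0$, then $\ker(a_j)$ is a proper $x'_j$-invariant $\Df$-submodule of $\V'_j$, hence zero, so $a_j$ is injective; then $a_j b_j|_{\V^{(p_j)}}=0$ forces $b_j|_{\V^{(p_j)}}=0$, and combined with $b_j$ annihilating $\bigoplus_{q\ne p_j}\V^{(q)}$ this yields $b_j=0$. Thus for each $j$, either $a_j=0$ or $b_j=0$. A direct computation, based on the action of $\a''_j=\End_\F(\Hom_\Df(\V'_j,\V))$ on $(a_j,b_j)$ and the mutual orthogonality of the $\W_j$'s, gives
\[
\xi(e)=2\sum_j\tr_\F\bigl((e_j a_j)\, b_j\bigr)\qquad\text{for }e=\sum_j e_j\in\a''=\bigoplus_j\a''_j,
\]
and each summand vanishes; hence $\xi=0$, the desired contradiction.

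The group case is essentially the same: $(gh')(w)=-w$ unfolds to $ga=-ah'$ and $h'b=-bg$, giving intertwining relations between $-h'$ on $\V'$ and $g$ on $\V$; regularity of $h'\in\H'$ forces the minimal polynomials of the $-h'_j$'s (equivalently of the $h'_j$'s) to be pairwise distinct, and the same argument using the $Z(\Df)[g]$-primary decomposition of $\V$ applies verbatim. The main obstacle is the separation step deducing from $ab=0$ that $b_j=0$ whenever $a_j\ne 0$: this critically relies on both the distinctness of the $p_j$'s (to decouple the primary contributions of different $a_k b_k$) and the $x'_j$-irreducibility of $\V'_j$ (which yields injectivity of nonzero $a_j$); both are direct consequences of the regularity hypothesis.
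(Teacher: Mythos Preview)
Your proof is correct, but it takes a genuinely different route from the paper's. The paper's argument, after obtaining $\sum_j x_j y_j=0$ and the intertwining relations, multiplies on the left by powers of $x$ to obtain $\sum_j x_j (x')^k y_j=0$ for all $k\ge 0$, and then invokes Lemma~\ref{regular elements generate} (the powers of a regular $x'$ span $\h'$ over $Z(\Df)$) to replace $(x')^k$ by an arbitrary element of $\h'$; taking $z'$ to be the idempotent onto $\V'_j$ immediately isolates $x_j y_j=0$. Your approach bypasses Lemma~\ref{regular elements generate} entirely: you instead argue that regularity forces the minimal polynomials $p_j$ to be pairwise distinct (which is correct: if $p_i=p_j$, the simple algebra $\Ef_i\otimes_{Z(\Df)}\Df^{\mathrm{op}}$ has a unique simple module, so $\V'_i\cong\V'_j$ as $(x',\Df)$-modules, producing a nonzero intertwiner and enlarging the centralizer), and then use the primary decomposition of $\V$ relative to $x_0$ to separate the terms. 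The paper's route is shorter once Lemma~\ref{regular elements generate} is in hand; yours is more self-contained and gives the slightly sharper conclusion that for each $j$ either $a_j=0$ or $b_j=0$, which is exactly what is needed to kill the rank-one $\a''_j$-moment map.
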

\begin{prf}
Suppose $(x+x', \tau_{\a''{}^*}(w))\in  N_{\iota_{\g, x'}}$. Then the restriction $\tau_{\a''{}*}(w))|_{\g}=0$. By the definition \eqref{moment_map_II}, 
$\tau_{\a''{}*}(w))|_{\g}=\tau_{\g^*}(w)$. In terms of the decomposition \eqref{A'isotypicdecompositionW} 
\[
w=(w_1, w_2, \ldots)\,,\ \ w_j=(x_j,y_j)\,,\ \ x_j\in\Hom(\V'_j,\V)\,,\ \ y_j\in\Hom(\V,\V'_j)
\]
and in
terms of the decomposition \eqref{symplectic_space_II} 
\[
w= (\tilde{x}, \tilde{y})\,,\ \ \tilde{x} = (x_1, x_2, \ldots)\,,\ \ \tilde{y} = (y_1, y_2, \ldots)\,.
\]
Hence, by \eqref{identification_of_moment_maps_II}, 
\[
0=\tau_{\g^*}(w)(z)=2\tr(z\sum_j x_jy_j) 
\]
for all $z\in \g = \gl(\V))$. This gives
\begin{equation} \label{eqx1y1}
x_1y_1+x_2y_2+ x_3 y_3 + \ldots =0\,.
\end{equation}
We also have the condition
\begin{equation} \label{eqx1y1.0}
(x+x')w =0 \,,
\end{equation}
which means that for all $j$
\begin{equation}\label{(7.7)}
xx_j=x_jx'\,,\ \ \ y_jx=x'y_j\,.
\end{equation}
Multiplying $x^k$ from the left to \eqref{eqx1y1} and using \eqref{(7.7)} gives
\begin{equation}\label{(7.7)0}
x_1x'{}^ky_1+x_2x'{}^ky_2+\ldots=0 \qquad (k=0, 1, 2, \ldots)\,.
\end{equation}
Hence, Lemma \ref{regular elements generate} implies that 
\[
x_1z'y_1+x_2z'y_2+\ldots=0 \qquad (z'\in\h')\,.
\]
Therefore
\begin{equation}\label{(7.7)1}
x_jy_j=0
\end{equation}
for all $j$, which, by \eqref{A''a''} and \eqref{identification_of_moment_maps_II},  means that 
$\tau_{\a''^*}(w)=0$. Hence, $S_{\a''}\cap N_{\iota_{\g, x'}}=\emptyset$. 

In order to prove $S_{\A''}\cap N_{\iota_{\G, x'}}=\emptyset$, we consider an element $(gh', \tau_{\a''{}^*}(w))\in  N_{\iota_{\G, h'}}$
and replace \eqref{eqx1y1.0} by
\begin{equation} \label{eqx1y1.1}
gh'(w) =-w \,,
\end{equation}
which means that for all $j$
\begin{equation}\label{(7.7).1}
gx_j=-x_jh'\,,\ \ \ y_jg=-h'y_j\,.
\end{equation}
Multiplying $g^k$ from the left to \eqref{eqx1y1.1} and using \eqref{(7.7).1} gives 
\[
x_1h'{}^ky_1+x_2h'{}^ky_2+\ldots=0 \qquad (k=0, 1, 2, \ldots)\,.
\]
Lemma \ref{regular elements generate} implies \eqref{(7.7)} and therefore \eqref{(7.7)1}
for all $j$, which means that 
$\tau_{\a''^*}(w)=0$. Hence, $S_{\A''}\cap N_{\iota_{\G, x'}}=\emptyset$.
\end{prf}

\subsection{\bf Some geometry of moment maps for dual pairs of type I} \label{Some geometry of moment maps for irreducible dual pairs of type I}
\ 
Let $\sigma$ be an involution on the division $\F$-algebra $\Df$ fixing $\F$ pointwise.
Let $\V$ and $\V'$ be two finite dimensional right $\Df$-vector spaces with non-degenerate forms 
$(\cdot,\cdot)$ and $(\cdot,\cdot)'$ respectively, one $\sigma$-hermitian and the other one $\sigma$-skew-hermitian. 
Denote by $\G\subseteq \End_\Df(\V)$ and $\G'\subseteq \End_\Df(\V')$ the corresponding isometry groups and, by $\g\subseteq \End_\Df(\V)$ and $\g'\subseteq \End_\Df(\V')$ their Lie algebras, respectively.

Define a map
\begin{eqnarray}\label{Some geometry of moment maps for irreducible dual pairs of type I.1}
&&\Hom_\Df(\V',\V)\ni w\to w^*\in \Hom_\Df(\V,\V')\,,\\
&&(w(v'),v)=(v',w^*(v))' \qquad (v\in \V, v'\in \V')\,.\nn
\end{eqnarray}
Set $\W=\Hom_\Df(\V',\V)$. 

The formula 
\begin{equation}\label{Some geometry of moment maps for irreducible dual pairs of type I.2}
\langle w', w\rangle=\tr(w'w^*) \qquad (w,w'\in\W)
\end{equation}
defines a non-degenerate symplectic form on the $\F$-vector space $\W$. 
Define
\begin{eqnarray}\label{moment_maps_I}
&&\tau_\g \colon \W\to \g\,,\ \ \ \tau_{\g'} \colon \W\to \g'\,,\\
&&\tau_\g(w)=ww^*\,,\ \ \ \tau_{\g'}(w)=w^*w \qquad (w\in \W)\,.\nn
\end{eqnarray}
Since the $\F$-valued bilinear form
\[
\Df\times\Df\ni (a,b) \to \tr_{\Df/\F}(ab)\in \F
\]
is non-degenerate, the $\F$-valued bilinear form
\begin{equation}\label{Fvaluedbilinearform_I}
\tr_{\Df/\F}\circ(\cdot,\cdot)
\end{equation}
equal to the composition of $\tr_{\Df/\F}$ with the form  $(\cdot,\cdot)$ on $\V$ is non-degenerate. 
From the classification of the division algebras with involution, \cite[Theorem 2.2, page 353]{Scharlau}, and from the assumption that $\F\subseteq Z(\Df)^\sigma$ we see that
\begin{equation}\label{sigmatraceD_I}
\tr_{\Df/\F}(a)=\tr_{\Df/\F}(\sigma(a)) \qquad (a\in \Df)\,.
\end{equation}
Since $\tr$ may be computed in terms of a basis and a dual basis of $\F$ viewed as a vector space over $\F$, \eqref{sigmatraceD_I} implies
\begin{equation}\label{trsigaistrace_I}
\tr(x)=\tr(\sigma(x))\qquad (x\in \End_\Df(\V))\,.
\end{equation}
This in turn implies that the two subspaces
\[
\End_\Df(\V)_{\sigma=1} \ \text{ and }  \ \End_\Df(\V)_{\sigma=-1} 
\]
are orthogonal in $\End_\Df(\V)$ with respect to the pairing
\[
\End_\Df(\V)\times \End_\Df(\V)\ni (x,y)\to\tr(xy)\in\F\,.
\]
Since $\g=\End_\Df(\V)_{\sigma=-1}$
the identification \eqref{identification_with_the_dual_II} restricts to  identification $\g=\g^*$ and similarly $\g'=\g'{}^*$. Recall the moment map \eqref{moment_map_II}. 
In these terms we have
\begin{equation}\label{identification_of_moment_maps_I}
\tau_{\g^*}=\tau_\g\,,\ \ \ \tau_{\g'{}^*}=\tau_{\g'}\,.
\end{equation}
Let $\H' \subseteq \G'$ be a Cartan subgroup with the split component $\A'\subseteq \H'$. 
Recall the decomposition \eqref{direct sum decomposition preserved by}
\[
\V'=\X'\oplus\V'{}^{ell}\oplus \Y'\,.
\]
Let
\begin{equation}\label{A'isotypicdecompositionI}
\X'=\X_1'\oplus\X'_2\oplus \ldots \,,\ \ \ \Y'=\Y_1'\oplus\Y'_2\oplus \ldots \,,
\end{equation}
be the decompositions into $\A' $-isotypic components so that $\Y'_j$ is dual to $\X'_j$ via the symplectic form $\langle\cdot,\cdot\rangle$. Also let
\[
\W^{split}=\Hom(\X'\oplus\Y', \V)\,,\ \ \ \W^{ell}=\Hom(\V'{}^{ell}, \V)\,.
\]
Let 
\[
\W_j=\Hom(\X'_j\oplus \Y_j', \V)=\Hom(\X'_j, \V)\oplus \Hom(\Y_j',\V)\,.
\]
We have 
\[
\W^{split} = \Hom(\X', \V)\oplus \Hom(\Y', \V) = \bigoplus_{j} \W_j.
\]
The symplectic space $\W$ decomposes into a direct sum of mutually orthogonal subspaces:
\begin{equation}\label{A'isotypicdecompositionWI}
\W = \W^{ell}\oplus\W^{split} = \W^{ell}\oplus \left(\bigoplus_{j} \W_j \right).
\end{equation}
Furthermore,
\begin{equation}\label{A'isotypicdecompositionWIpolarization}
\W^{split}= \Hom(\X', \V)\oplus \Hom(\Y', \V)
\end{equation}
is a complete polarization.
The group $\A''$ preserves the decomposition \eqref{A'isotypicdecompositionWI} and the obvious restrictions yield the following isomorphisms:
\begin{eqnarray}\label{A''a''I.1}
\a''&=&\sp(\W^{ell})\oplus \End_\F(\Hom(\X'_1, \V))\oplus \End_\F(\Hom(\X'_2, \V))\oplus\\
\A''&=&\Sp(\W^{ell})\times \GL_\F(\Hom(\X'_1, \V))\times \GL_\F(\Hom(\X'_2, \V))\times\ldots \,.\nn
\end{eqnarray}
Thus $\A'''=\A'$ and, as a reductive dual pair, $(\A'', \A''') = (\A'', \A')$ is isomorphic to
\begin{eqnarray}\label{A''A'''I.2}
(\Sp_{2n_0}(\F), \Og_1(\F))\times(\GL_{n_1}(\F), \GL_1(\F))\times (\GL_{n_2}(\F), \GL_1(\F))\times\ldots \,,
\end{eqnarray}
where $2n_0=\dim_\F\W^{ell}$ and for $1\leq j$, $n_j=\dim_\F\Hom(\V'_j,\V))$. 
\begin{lem}\label{regular elements generate I}
Let $\h'$ be the Lie algebra  of $\H'$ and 
let $x'\in \reg{\h'}$  or $x'\in \reg{\H'}$. 
Then, in terms of \eqref{direct sum decomposition preserved by} for the space $\V'$, the restrictions of  the elements $1$, $x'$, $x'{}^2$, $x'{}^3$,... to $\X'\oplus \V'{}^{ell}$ span $\End(\X'\oplus \V'{}^{ell})^{x'}$ over $Z(\Df)$.
\end{lem}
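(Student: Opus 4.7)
The plan is to reduce the claim to Lemma \ref{regular elements generate} by treating $y := x'|_{\X' \oplus \V'^{ell}}$ as a regular semisimple element of the associative algebra $\End_\Df(\X' \oplus \V'^{ell})$, in the sense of the equality case of Lemma \ref{EndD2} applied to the decomposition $\X' \oplus \V'^{ell} = \bigoplus_{j \in J} \V_j \oplus \bigoplus_{l \in L} \V_l$.

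First, regularity $x' \in \reg{\h'}$ forces $\g'^{x'}$ to attain the lower bound of Lemma \ref{gD3}, so equality holds in \eqref{gD3.1}. The proof of Lemma \ref{gD3} obtains this bound by summing the individual bounds of Lemma \ref{EndD1} (applied to each $\V_j$) and of Lemma \ref{gD1} (applied to each $\V_l$); equality of the sum then forces equality of each summand, yielding $\End_\Df(\V_j)^{x'} = Z(\Df)[x'|_{\V_j}] =: \Ef_j$ and $\End_\Df(\V_l)^{x'} = Z(\Df)[x'|_{\V_l}] =: \Ef_l$, each a field.

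Next, the direct-sum decomposition \eqref{gD3.2} will force the $(\Df, x')$-modules $\V_j$ $(j \in J)$ and $\V_l$ $(l \in L)$ to be pairwise non-isomorphic. Suppose to the contrary that two are intertwined by an $x'$-equivariant $\Df$-isomorphism $T$; three sub-cases arise, according as $T : \V_j \to \V_k$ with $j \neq k$, $T : \V_l \to \V_m$ with $l \neq m$, or the mixed case $T : \V_j \to \V_l$. In each, the form $(\cdot,\cdot)$ furnishes a $\sigma$-adjoint $T^{\#}$ on the partner subspace --- mapping $\V_{k'} \to \V_{j'}$, $\V_m \to \V_l$, or $\V_l \to \V_{j'}$ respectively --- constructed via the polarization pairing $\V_j \leftrightarrow \V_{j'}$ and/or the non-degenerate restriction of the form to $\V_l$. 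The skew combination $T - T^{\#}$, extended by zero, lies in $\g'^{x'}$ but is not contained in the direct sum \eqref{gD3.2}, a contradiction.

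Combining the two preceding steps, $y$ satisfies the equality case of Lemma \ref{EndD2}, so Lemma \ref{regular elements generate} applied to $y$ (in the associative algebra $\End_\Df(\X' \oplus \V'^{ell})$) yields that $1, y, y^2, \ldots$ span $\End_\Df(\X' \oplus \V'^{ell})^{y} = \bigoplus_j \Ef_j \oplus \bigoplus_l \Ef_l$ over $Z(\Df)$, which is the claim. The group version $x' \in \reg{\H'}$ is handled identically, with the group-level centralizer decomposition (in the style of \eqref{csginGlD}) replacing \eqref{gD3.2}. The principal technical difficulty will be verifying that $T - T^{\#}$ is genuinely skew in the mixed sub-case $T : \V_j \to \V_l$, which demands a careful sign check distinguishing the hermitian and skew-hermitian forms.
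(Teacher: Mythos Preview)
Your proof is correct and amounts to a careful unpacking of the paper's one-line argument, which simply cites \eqref{gD3.2} (and \eqref{csginGlD} for the group case) as ``immediate.''  The paper is implicitly relying on the reader to extract from the equality case of Lemma \ref{gD3} exactly the two facts you isolate: that each $\End_\Df(\V_j)^{x'}$ and $\End_\Df(\V_l)^{x'}$ is the field $Z(\Df)[x'|_{\V_j}]$ (resp.\ $Z(\Df)[x'|_{\V_l}]$), and that the direct-sum equality in \eqref{gD3.2} rules out any $x'$-intertwiners between distinct pieces.  Your explicit construction of $T - T^{\#}$ to reach a contradiction in the second step is the natural way to justify this, and your sign-check caveat in the mixed case is well placed but not an obstacle.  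In short, the two arguments coincide; yours is simply the honest version of what the paper gestures at.
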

\begin{prf}
This is immediate from \eqref{gD3.2} or \eqref{csginGlD} respectively.
\end{prf}
 
We shall identify the cotangent bundle $T^*\a''$ with $\a''\times\a''{}^*$ and $T^*\A''$ with $\A''\times\a''{}^*$ and use the notation developed just before Lemma \ref{geometricII}. 
\begin{lem}\label{geometricI}
Let
\begin{eqnarray}\label{geometricI.1}
S_{\a''}&=&\{(x,\tau_{\a''{}^*}(w))\in \a''\times(\a''{}^*\setminus \{0\});\ x(w)=0\,,\ w\in \W\}\,,\\
S_{\A''}&=&\{(g,\tau_{\a''{}^*}(w))\in \A''\times(\a''{}^*\setminus \{0\});\ g(w)=-w\,,\ w\in \W\}\,.\nn
\end{eqnarray}
Fix an element $x'\in\reg{\h'}$ and $h'\in\reg{\H'}$. Then $S_{\a''}\cap N_{\iota_{\g, x'}}=\emptyset$
and $S_{\A''}\cap N_{\iota_{\G, h'}}=\emptyset$.
\end{lem}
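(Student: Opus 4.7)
The plan is to mimic the proof of Lemma \ref{geometricII}, with the one genuinely new ingredient being the elliptic piece $\W^{ell}$, whose corresponding factor of $\a''$ is $\sp(\W^{ell})$ rather than a general linear algebra. Suppose $(x+x',\tau_{\a''{}^*}(w))\in N_{\iota_{\g,x'}}$. The conormal condition $\tau_{\a''{}^*}(w)|_\g=0$, combined with the identification $\g=\g^*$ coming from \eqref{trsigaistrace_I} and \eqref{identification_of_moment_maps_I}, yields $ww^*=0$. The action of $\g\times\g'$ on $\W=\Hom_\Df(\V',\V)$ turns $(x+x')(w)=0$ into the commutation $xw=wx'$, which iterates to $x^k w=w(x')^k$. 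Multiplying $ww^*=0$ on the left by $x^k$ and using this commutation yields the key identity
\[
w\,(x')^k\,w^*=0 \qquad (k=0,1,2,\ldots).
\]

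Now decompose $\V'=\X'\oplus \V'{}^{ell}\oplus \Y'$ with $\A'$-isotypic decompositions $\X'=\bigoplus_j\X'_j$, $\Y'=\bigoplus_j\Y'_j$, and split $w=w_{ell}+\sum_j(w_{\X,j}+w_{\Y,j})$. The duality pairing of $\X'_j$ with $\Y'_j$ identifies the $\X'$-, $\V'{}^{ell}$-, and $\Y'$-components of $w^*$ with adjoints $(w_\Y)^\dagger:\V\to\X'$, $(w_{ell})^\dagger:\V\to\V'{}^{ell}$, $(w_\X)^\dagger:\V\to\Y'$ respectively, and the identity above becomes
\[
\sum_j\Bigl[w_{\X,j}(x'_{\X,j})^k(w_{\Y,j})^\dagger + w_{\Y,j}(x'_{\Y,j})^k(w_{\X,j})^\dagger\Bigr] + w_{ell}(x'_{ell})^k(w_{ell})^\dagger = 0.
\]
By Lemma \ref{regular elements generate I} the restrictions of the $(x')^k$ to $\X'\oplus\V'{}^{ell}$ span the centralizer $\End(\X'\oplus\V'{}^{ell})^{x'}=\bigoplus_j\End(\X'_j)^{x'}\oplus\End(\V'{}^{ell})^{x'}$, a direct sum because the isotypic summands have disjoint spectra. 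Choosing polynomial combinations of the $(x')^k$ that isolate a single summand then gives, separately,
\[
w_{\X,j}\,z'\,(w_{\Y,j})^\dagger=0 \ \ (z'\in\End(\X'_j)^{x'}), \qquad w_{ell}\,z'\,(w_{ell})^\dagger=0 \ \ (z'\in\End(\V'{}^{ell})^{x'}).
\]

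For the $\GL$ pieces this first family reproduces, as in Lemma \ref{geometricII}, the vanishing of the $\GL_{n_j}$-components of $\tau_{\a''{}^*}(w)$ appearing in \eqref{A''a''I.1}. For the elliptic piece, combining $w_{ell}\,z'\,(w_{ell})^\dagger=0$ for all $z'$ with the non-degeneracy of $(\cdot,\cdot)'|_{\V'{}^{ell}}$ and the fact that $\H'|_{\V'{}^{ell}}$ preserves no isotropic subspace (recorded after \eqref{direct sum decomposition preserved by}) forces $w_{ell}=0$, whence the $\sp(\W^{ell})$-component of $\tau_{\a''{}^*}(w)$ also vanishes. Thus $\tau_{\a''{}^*}(w)=0$, contradicting the assumption. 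The group statement $S_{\A''}\cap N_{\iota_{\G,h'}}=\emptyset$ is proved identically, with $x+x'$ replaced by $gh'$, the commutation $gw=-wh'$ giving $g^k w=(-1)^k w(h')^k$, and Lemma \ref{regular elements generate I} applied to $h'\in\reg{\H'}$.

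The main obstacle I anticipate is the elliptic step: deducing $w_{ell}=0$ from the vanishings $w_{ell}\,z'\,(w_{ell})^\dagger=0$ for all $z'\in\End(\V'{}^{ell})^{x'}$ is new to type I and requires genuine use of the non-degeneracy of $(\cdot,\cdot)'$ together with the compactness of $\H'|_{\V'{}^{ell}}$, which rules out an $\H'$-invariant isotropic image of $(w_{ell})^\dagger$.
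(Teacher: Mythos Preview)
Your argument tracks the paper's proof closely: both derive $w\,(x')^k\,w^*=0$ from the conormal and kernel conditions, invoke Lemma \ref{regular elements generate I} to separate the $\X'_j$- and $\V'^{ell}$-blocks, and dispose of the $\GL$ blocks exactly as in Lemma \ref{geometricII}. The paper then records $w_0w_0^*=0$ and $x_jy_j^*=0$ and asserts $\tau_{\a''^*}(w)=0$ without further comment.

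You are right that the elliptic block needs more than the paper writes. The $\sp(\W^{ell})$-component of $\tau_{\a''^*}(w)$ is $e\mapsto\langle e(w_0),w_0\rangle_{\W^{ell}}$, and since $\sp(\W^{ell})\cdot w_0=\W^{ell}$ for $w_0\ne 0$, this vanishes only when $w_0=0$; the condition $w_0w_0^*=0$ in $\End_\Df(\V)$ alone does not force that. Your fix via the no-isotropic-subspace property is the correct completion. To make it precise: adjoint the commutation $xw_0=w_0x'$ (using $\sigma(x)=-x$, $\sigma(x')=-x'$) to get $w_0^*x=x'w_0^*$, so $\mathrm{im}(w_0^*)\subseteq\V'^{ell}$ is $x'$-invariant, hence $\H'|_{\V'^{ell}}$-invariant by regularity; and $w_0w_0^*=0$ gives $\mathrm{im}(w_0^*)\subseteq\ker(w_0)=\bigl(\mathrm{im}\,w_0^*\bigr)^{\perp}$, so $\mathrm{im}(w_0^*)$ is isotropic. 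The remark after \eqref{direct sum decomposition preserved by} then forces $\mathrm{im}(w_0^*)=0$, hence $w_0=0$. This is exactly the ``obstacle'' you anticipated, and your sketch resolves it; the paper's proof is simply silent at this point.
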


\begin{prf}
Suppose $(x+x', \tau_{\a''{}^*}(w))\in N_{\iota_{\g, x'}}$. Then the restriction 
$\tau_{\a''{}*}(w)|_{\g}=0$. By the definition \eqref{moment_map_II}, 
$\tau_{\a''{}*}(w))|_{\g}=\tau_{\g^*}(w)$. In terms of the decomposition \eqref{A'isotypicdecompositionWI} 
\[
w=(w_0, w_1, w_2, \ldots)\,,\ \ w_0\in\W^{ell}\,,\ w_j=(x_j,y_j)\,,\ \ x_j\in\Hom(\V'_j,\V)\,,\ \ y_j\in\Hom(\V,\V'_j)\,,
\]
where $j\geq 1$.
Hence, by \eqref{identification_of_moment_maps_I}, 
\[
0=\tau_{\g^*}(w)(z)=\tr(z(w_0w_0^*+2x_1y_1^*+2x_2y_2^*+\ldots)) \qquad (z\in \g)\,.
\]
Hence,
\begin{equation} \label{eqx1yI1}
w_0w_0^*+2x_1y_1^*+2x_2y_2^*+\ldots =0\,.
\end{equation}
We also have the condition
\[
(x+x')w =0 \,,
\]
which means that for all $j$
\begin{equation}\label{(7.7)I}
xw_0=w_0x'\,,\ \ \ xx_j=x_jx'\,,\ \ \ y_jx=x'y_j\,,\ \ \ j\geq 1\,.
\end{equation}
Multiplying $x'{}^k$ from the left to \eqref{eqx1y1} and using \eqref{(7.7)} gives
\begin{equation}\label{(7.7)0I}
w_0x'{}^kw_0^*+2x_1x'{}^ky_1+2x_2x'{}^ky_2+ \ldots =0 \qquad (k=0, 1, 2, \ldots \,.
\end{equation}
Hence, Lemma \ref{regular elements generate I} implies that 
\[
w_0z'w_0^*+2x_1z'y_1^*+2x_2z'y_2^*+ \ldots =0 \qquad (z'\in \End(\X'\oplus \V'{}^{ell})^{x'})\,.
\]
Therefore
\begin{equation}\label{(7.7)1I}
w_0w_0^*=0\,,\ \ \ x_jy_j^*=0\,,\ \ \ j\geq 1\,,
\end{equation}
which, by \eqref{A''a''I.1} and \eqref{identification_of_moment_maps_I},  means that 
$\tau_{\a''^*}(w)=0$. Hence, $S_{\a''}\cap N_{\iota_{\g, x'}}=\emptyset$. The modification of the above argument to show that $S_{\A''}\cap N_{\iota_{\G, x'}}=\emptyset$ is similar to that used in the proof of Lemma \ref{geometricII}.
\end{prf}

\section{\bf The distributions $\chc_{x'} \in \Ss^*(\g)$ for $x'\in\reg{\h'}$.}\label{The distributionchcx'}

\subsection{\bf The minimal nilpotent $\Sp(\W)$-orbital integral in $\sp(\W)$.}\label{The minimal nilpotent}
\
Let $\W$ be a finite dimensional vector space over $\F$ with a non-degenerate symplectic form $\langle \cdot,\cdot\rangle$.
Let $\Sp(\W) \subseteq \End(\W)$ denote the corresponding symplectic group, with the Lie algebra $\sp(\W) \subseteq \End(\W)$. Fix a lattice $\mathcal L\subseteq \W$. We shall assume that  $\mathcal L$ is self-dual with respect to the symplectic form in the sense that
\begin{equation}\label{selfduallattice}
\left(\langle w,w'\rangle\in\fo_\F\ \ \text{for all}\ \ w'\in \mathcal L\right)\  \Longleftrightarrow\ w\in \mathcal L\,.
\end{equation}
We normalize the Haar measure $d\mu_\W(w)=dw$ on $\W$ so that the volume of $\mathcal L$ is $1$. (Note that ${\mathcal{L}}$ is open compact.) This determines a normalization of the Haar measure $\mu_\Uv$ on any subspace $\Uv\subseteq\W$, so that $\mu_\Uv(\Uv\cap\mathcal L)=1$. 

Recalling \cite[page 189]{Weil_Basic},  we let $\End(\W,\mathcal L)$ be the set of endomorphisms in $\End(\W)$ preserving $\mathcal L$. The set $\End(\W,\mathcal L)$ is a lattice in $\End(\W)$. We shall use this lattice to normalize the Haar measures on the subspaces of $\End(\W)$ as above, in particular on $\sp(\W)$.

Fix a unitary character $\chi$ of the additive group $\F$ whose kernel $\ker \chi = \fo_\F$. We recall the notion of the wave front set (see Appendix A). 
We define
\begin{equation}\label{TFofminimalnilpotentorbit1}
\chc(\psi)=\int_\W\int_{\sp(\W)}\chi(\langle x w, w\rangle) \psi(x)\,dx\,dw 
\end{equation}
for $\psi\in\Ss(\sp(\W)$.

\begin{thm}\label{Tminnilorb}
\begin{enumerate}[(i)]
\item Each consecutive integral in \eqref{TFofminimalnilpotentorbit1} converges absolutely and it defines a distribution $\chc \in \Ss^*(\sp(\W))$.

\item We have
\begin{equation}\label{TFofminimalnilpotentorbit2}
\chc(\psi)=\int_{\sp(\W)}\gamma_{{\mathrm{Weil}}}(x)|\det(x)|_\F^{-\frac{1}{2}} \psi(x)\,dx \qquad (\psi\in\Ss(\sp(\W))\,,
\end{equation}
where $\gamma_{{\mathrm{Weil}}}(x)$ is the Weil factor of the quadratic form $\langle x w, w\rangle$, \cite[Corollary 2, page 162]{WeilWeil}, $|\cdot|_\F$ is the module on $\F$, \cite[page 3]{Weil_Basic} and the integral is absolutely convergent. In particular $\gamma_{{\mathrm{Weil}}}(x)^8=1$. 

\item The subset
\begin{equation}\label{TFofminimalnilpotentorbit3}
\tau_{\sp^*(\W)}(\W\setminus\{0\})\subseteq \sp^*(\W)\,,
\end{equation}
is a minimal nilpotent coadjoint orbit.
The distribution $\chc$ defined in \eqref{TFofminimalnilpotentorbit1} is a Fourier transform of a positive invariant measure on this nilpotent orbit. Furthermore,
\begin{equation}\label{TFofminimalnilpotentorbit4}
\WF(\chc)=\{(x,\tau_{\sp^*(\W)}(w));\ \tau_{\sp^*(\W)}(w)\ne 0\,,\ x(w)=0\}\,.
\end{equation}
\end{enumerate}
\end{thm}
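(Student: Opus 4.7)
I treat parts (i) and (ii) together by a truncation argument, then read off (iii) from the explicit formula produced in (ii). The key observation is that the inner $x$-integral in \eqref{TFofminimalnilpotentorbit1} is the Fourier transform of the Schwartz-Bruhat function $\psi$ evaluated at the moment $\tau_{\sp^*(\W)}(w)\in\sp^*(\W)$; since the Fourier transform takes $\Ss(\sp(\W))$ into $\Ss(\sp^*(\W))$, the function
\[
I(w):=\int_{\sp(\W)}\chi(\langle xw,w\rangle)\psi(x)\,dx
\]
is locally constant and nonzero only when $\tau_{\sp^*(\W)}(w)$ lies in a fixed compact set $K\subseteq\sp^*(\W)$. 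Since $\tau_{\sp^*(\W)}$ is homogeneous quadratic and vanishes only at $w=0$ (for $w\ne 0$ one can choose $x\in\sp(\W)$ with $\langle xw,w\rangle\ne 0$, using the identification $\sp(\W)\cong\mathrm{Sym}^2\W^*$), the preimage $\tau_{\sp^*(\W)}^{-1}(K)$ is compact in $\W$, which gives absolute convergence of the outer $w$-integral and hence (i).

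For (ii), let $\phi_k$ denote the characteristic function of the lattice $\varpi^{-k}\mathcal L$. By the compactness just noted, $\chc(\psi)=\int_\W\phi_k(w)I(w)\,dw$ for all $k$ large enough, and Fubini (both integrations are now over compact sets) rewrites this as
\[
\chc(\psi)=\int_{\sp(\W)}\psi(x)\int_{\varpi^{-k}\mathcal L}\chi(\langle xw,w\rangle)\,dw\,dx.
\]
Weil's computation of partial Gauss sums for the non-degenerate quadratic form $w\mapsto\langle xw,w\rangle$ \cite[Corollary 2, page 162]{WeilWeil} shows that for each $x\in\sp(\W)$ with $\det x\ne 0$ the inner truncated integral stabilizes, for $k$ sufficiently large depending on $x$, to $\gamma_{{\mathrm{Weil}}}(x)|\det x|_\F^{-1/2}$, and admits a uniform-in-$k$ majorant of the form $C|\det x|_\F^{-1/2}$. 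Combined with the local integrability of $|\det|_\F^{-1/2}$ on $\sp(\W)$ (a standard Igusa-type assertion) and the compact support of $\psi$, dominated convergence in $x$ as $k\to\infty$ yields \eqref{TFofminimalnilpotentorbit2}; the identity $\gamma_{{\mathrm{Weil}}}^8=1$ is classical.

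For (iii), transitivity of $\Sp(\W)$ on $\W\setminus\{0\}$ identifies $\tau_{\sp^*(\W)}(\W\setminus\{0\})$ with a single $\Sp(\W)$-orbit, nilpotent and minimal by a dimension count. Rewriting $\chc(\psi)=\int_\W I(w)\,dw$ exhibits $\chc$ as the Fourier transform of the pushforward measure $(\tau_{\sp^*(\W)})_*(dw)$, which is positive, $\Sp(\W)$-invariant and supported on this minimal orbit. For the wave front set I will invoke the p-adic microlocal principle (from the appendix) that $\WF$ of such a Fourier transform equals the conormal bundle to the support of the measure. Differentiating $w\mapsto\tau_{\sp^*(\W)}(w)$ in direction $v\in\W$ produces the tangent functional $x\mapsto\langle xv,w\rangle+\langle xw,v\rangle$, and the antisymmetry identity $\langle xa,b\rangle=-\langle a,xb\rangle$ for $x\in\sp(\W)$ reduces the annihilation condition to $xw=0$, which is \eqref{TFofminimalnilpotentorbit4}. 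The main technical hurdle will be extracting the uniform-in-$k$ majorant from Weil's stabilization theorem and combining it with local integrability of $|\det|_\F^{-1/2}$ on $\sp(\W)$ to drive the dominated convergence in (ii).
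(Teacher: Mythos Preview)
Your arguments for (i) and (ii) are correct and in fact more direct than the paper's. The paper simply cites \cite[Theorem 4.4]{DeBackerSally1999} for the convergence in (i), whereas your observation that the inner integral is $\widehat\psi$ evaluated at $\pm\tau_{\sp^*(\W)}(w)$, combined with properness of $\tau_{\sp^*(\W)}$ (which follows from $\tau^{-1}(0)=\{0\}$ and degree-$2$ homogeneity, exactly as you say), settles (i) with no external input. Your truncation route to (ii) is also sound; the uniform majorant $C\,|\det x|_\F^{-1/2}$ you need does hold (after diagonalizing the quadratic form it reduces to the one-variable estimate $\bigl|\int_{|t|\le R}\chi(at^2)\,dt\bigr|\le|a|^{-1/2}$, valid for every $R$), and the local integrability of $|\det|_\F^{-1/2}$ on $\sp(\W)$ you call ``Igusa-type'' is precisely what the paper extracts from DeBacker--Sally.

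Your outline for (iii) follows the same strategy as the paper --- conormal bundle of the orbit, then Fourier transform --- and your tangent/conormal computation via $D\tau_w$ correctly yields the condition $xw=0$. But the ``microlocal principle'' you invoke is misstated: it is \emph{not} true that $\WF(\cF\nu)$ equals the conormal bundle of $\supp\nu$. What the appendix actually gives is two separate facts: $\WF(\nu)$ equals the conormal bundle of $\supp\nu$ on the smooth locus (Theorem~\ref{Theorem 8.1.5Hormander1}), and for homogeneous $\nu$ one has $(C,D)\in\WF(\nu)\Leftrightarrow(D,-C)\in\WF(\cF\nu)$ (Theorem~\ref{a1}). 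The paper, working in the symmetric-matrix model $\sp(\W)\cong SM_{2n}(\F)$, first handles the singular cone point $0\in\supp\nu$ via $\Sigma_0(\nu)=\supp\widehat\nu$, then computes the conormal on the smooth part of the orbit, and only then applies the flip of Theorem~\ref{a1}. Under the paper's Fourier convention, $\chc=\cF\nu$ where $\nu$ is the pushforward of $dw$ under $w\mapsto -\tau_{\sp^*(\W)}(w)$ (note the sign, coming from the minus in $\cF$), so the $-C$ in the flip cancels against the $-\tau$ in $\supp\nu$ to produce the $+\tau(w)$ in \eqref{TFofminimalnilpotentorbit4}. Your write-up jumps from the conormal pair $(\tau(w),x)$ straight to $(x,\tau(w))$ without invoking Theorem~\ref{a1} or tracking either sign; that flip is where the actual content of the step lies, and without it the argument is incomplete.
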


\begin{prf}
Let $N_{\mathcal L}(w)=\inf_{a\in\F^\times,\ aw\in\mathcal L}|a|_\F^{-1}$ be the norm associated to the lattice $\mathcal L$, \cite[page 28]{Weil_Basic}, and let $w_1$, $w_2$, ..., $w_{2n}$ be an $N_{\mathcal L}$-orthonormal basis of $\W$, in the sense that
$\W=\F w_1\oplus\F w_2\oplus\ldots\oplus \F w_{2n}$, \footnote{We may assume that for $i \leq j$, we have $\langle w_i, w_j \rangle = \delta_{i+n,j}$.} 
and $1=N_{\mathcal L}(w_1)=N_{\mathcal L}(w_2)=\dots N_{\mathcal L}(w_{2n})$. By taking the coordinates with respect to this basis we get an $\F$-linear isomorphism
\[
\W\ni w=u_1w_1+u_2w_2+\ldots u_{2n}w_{2n}\to u=(u_1, u_2, \ldots, u_{2n})\in M_{1,2n}(\F)
\]
which transports the measure $dw$ to $du=du_1du_2 \ldots du_{2n}$, where each $du_j$ is normalized so that the mass of $\fo_\F$ is $1$. Also, for each $x\in\sp(\W)$, the bilinear form $\langle x \cdot, \cdot\rangle$ is symmetric on $\W$. 
We define a symplectic form $\langle \cdot, \cdot \rangle$ on $M_{1,2n}(\F)$ such that $\langle e_i, e_j \rangle =\langle w_i, w_j \rangle$.
Hence, we have an $\F$-linear bijection
\[
\sp(\W)\ni x\to S(x)\in SM_{2n}(\F)\,,\ \ \ S(x)_{i,j}=\langle x e_i, e_j\rangle\,,
\]
which transports the normalized measure $dx$ on $\sp(\W)$ to a measure $dC$ on the space of the symmetric matrices $SM_{2n}(\F)$. Define the Fourier transform on this space by
\[
\hat f(D)=\int_{SM_{2n}(\F)} \chi(-trace(DC)) f(C)\,dC \qquad (f\in \Ss(SM_{2n}(\F)) \,,\ D\in SM_{2n}(\F))
\]
where $trace(\cdot)$ stands for the trace of a matrix. 
Notice that for $u,u'\in  M_{1,2n}(\F)$ the equality $u^tu=u'{}^tu'$ happens if and only if $u'=\pm u$. 
Hence we may define a measure $\nu$ on $SM_{2n}(\F)$ by
\[
\nu(f)=\int_{M_{1,2n}(\F)}f(-u^tu)\,du \qquad (f\in \Ss(SM_{2n}(\F))\,,
\]
where the integral is absolutely convergent.
Furthermore
\[
\hat\nu(f)=\nu(\hat f)=\int_{M_{1,2n}(\F)}\int_{SM_{2n}(\F)}\chi(uCu^t) f(C)\,dC\,du
\qquad (f\in \Ss(SM_{2n}(\F))
\]
because $uCu^t=trace(u^tuC)$. The above becomes \eqref{TFofminimalnilpotentorbit1} if we set
\begin{equation}\label{TFofminimalnilpotentorbit11}
\chc(\psi)=\hat\nu(\psi\circ S^{-1}) \qquad (\psi\in \Ss(\sp(\W))\,.
\end{equation}
For $x\in\sp(\W)$ and $w\in\W$, let $[X_{i,j}]$ be the matrix of $x$ with respect to the ordered basis $\{ e_1, e_2, ... \}$ of $\W$, 
\[
xe_j=\sum_i X_{i,j} e_i\,,
\]
and let $(u_1, u_2, \ldots, u_{2n})$ be the coordinates of $w$ as before, 
$
w=\sum_iu_iw_i
$.
Set
\[
Y_{i,j}=\sum_l \langle e_j,e_l\rangle u_iu_l \,.
\]
Then it is easy to check that the formula
\[
ye_j=\sum_i Y_{i,j} e_i
\]
defines an element $y\in\sp(\W)$. Furthermore,
\[
\sum_j Y_{i,j}Y_{j,k}=\sum_{j,l,l'}\langle e_j,e_l\rangle u_iu_l  \langle e_k,e_{l'}\rangle u_ju_{l'}
=\sum_{j}\langle e_j,w\rangle u_i  \langle e_k,w\rangle u_j=
\langle w,w\rangle u_i  \langle e_k,w\rangle=0\,.
\]
Therefore $y^2=0$ in $\End_\F(\W)$. Since $\W\setminus\{0\}$ is a single $\Sp(\W)$-orbit, 
$\tau_{\sp(\W)}(\W\setminus\{0\})$ is a single nilpotent coadjoint orbit. Its dimension is equal to the dimension of $\W$, hence is minimal possible, see \cite{CollMc}.
Thus $\chc$ is the Fourier transform of a minimal nilpotent orbit. 

The absolute convergence in (i) follows from \cite[Theorem 4.4]{DeBackerSally1999}.
The explicit formula for the integral \eqref{TFofminimalnilpotentorbit2} follows from \cite[Theorem 2, page 161]{WeilWeil}. More precisely, for an invertible $x\in\sp(\W)$, the function
\[
\W\ni w\to \chi(\langle x w, w\rangle)\in\C
\]
is called a character of the second degree in \cite{WeilWeil}. He computes the Fourier transform of it, which is the function 
\[
\W\ni w' \to \gamma_{{\mathrm{Weil}}}(x)|\det(x)|_\F^{-\frac{1}{2}}\chi(\langle -x^{-1} w', w'\rangle)\in\C\,.
\]
By evaluating the Fourier transform at $w'=0$ we see that
\[
\int_\W\chi(\langle x w, w\rangle) \,dw=\gamma_{{\mathrm{Weil}}}(x)|\det(x)|_\F^{-\frac{1}{2}}\,.
\]
By Theorem \ref{a1}, the wave front set of $\nu$ at $0$ is equal to $\supp \hat \nu$. As we have can see from \eqref{TFofminimalnilpotentorbit11} and \eqref{TFofminimalnilpotentorbit2}, this coincides with the whole space $SM_{2n}(\F)$.

From now on we consider $C\in \supp\,  \nu \setminus\{0\}$. 
The measure $\nu$ is invariant under the action
\[
SM_{2n}(\F)\ni C\to gCg^t\in SM_{2n}(\F) \qquad (g\in \SL_{2n}(\F)).
\]
Since $\supp \,\nu \setminus\{0\}$ is a single orbit under this action, Theorem \ref{Theorem 8.1.5Hormander1} 
applied to the submanifold $\supp \, \nu \setminus\{0\}\subseteq SM_{2n}(\F)$,
implies that $(C,D)\in WF(\nu)$ if and only if $D$ is orthogonal to the tangent space to $\supp\,\nu\setminus\{0\}$ at $C$. 
This means that for any $X\in \sl_{2n}(\F)$
\[
0=\tr(D(XC+CX^t))=2\tr(CDX)\,.
\]
Hence $CD$ is a constant multiple of the identity. 
However $C$ is of rank $1$ and it is not invertible. 
Thus $CD=0$. Equivalently $0=(CD)^t=DC$. Hence
\begin{equation}\label{wfftrankone1}
WF(\nu)=\{(C,D)\in \supp \nu\times SM_m(\F);\ DC=0\}
\end{equation}
Since $\nu$ is homogeneous,
\eqref{wfftrankone1} together with Theorem \ref{a1} imply \eqref{TFofminimalnilpotentorbit4}.  
\end{prf}

\subsection{\bf The minimal $\GL(\X)\times \GL(\X)$-orbital integral in $\End(\X)$.}\label{The minimal orbital}
\
Fix a complete polarization $\W=\X\oplus \Y$. The group $\F^\times$ acts on $\W$ by
\begin{equation}\label{Faction}
a(x+y)=xa+ya^{-1} \qquad (a\in \F^\times\,,\ x\in\X\,,\ y\in\Y)
\end{equation}
preserving the symplectic form $\langle\cdot,\cdot\rangle$. The centralizer of this action in $\Sp(\W)$ preserves $\X$ and $\Y$ and is isomorphic to $\GL(\X)$, by restriction.
The group $\F^\times$ acts on
\[
\W_{\F^\times}=(\X\setminus \{0\})\oplus (\Y\setminus \{0\})
\]
without fixed points. Define a positive measure $d(\F^\times w)$ on the orbit space $\F^\times\backslash \W_{\F^\times}$ by
\begin{equation}\label{measureonthequotient}
\int_{\W_{\F^\times}}\phi(w)\,dw=\int_{\F^\times\backslash \W_{\F^\times}}\int_{\F^\times}
\phi(a(w))\,\frac{da}{|a|}\,d(\F^\times w) \qquad (\phi\in\Ss(\W))\,.
\end{equation}
The group $\GL(\X)\times \GL(\X)$ acts on $\End(\X)$ by 
\[
(g,h) \cdot x = gxh^{-1}
\]
for $(g,h) \in \GL(\X)\times \GL(\X)$ and $x \in \End(\X)$.
This gives the corresponding left and right action on the function space
\[
L\otimes R(g,h)\psi(x)=\psi(g^{-1}xh)\qquad (g,h\in \GL(\V), x\in \End(\X), \psi\in\Ss(\End(\X))\,.
\]
Let $n=\dim_\F\X$. For each integer $0\leq k\leq n$, let $\Or_k$ be the set of endomorphisms in $\End(\X)$ of rank~$k$. The set $\Or_k$ is a single $\GL(\X)\times \GL(\X)$-orbit and up to a constant multiple there is a unique positive measure $\mu_{\Or_k}$ supported on $\Or_k$ such that
\begin{equation}\label{transformationproperty}
\mu_{\Or_k}(L\otimes R(g,h)\psi)=|\det(g)|^{k}|\det(h)|^{-k}\mu_{\Or_k}(\psi) \qquad (\psi\in C_c(\Or_k))\,.
\end{equation}

\begin{thm} \label{TFofminimalorbit}
(i) For $k=1$ and $k=n-1$, the measure $\mu_{\Or_k}$ extends by zero to define a distribution $\mu_{\Or_k}\in\Ss^*(\End(\X))$. This extension satisfies the transformation property \eqref{transformationproperty} for $\psi\in\Ss(\End(\X))$.

(ii) For $\psi\in\Ss(\End(\X))$, we have the following formula 
\begin{equation}\label{TFofminimalorbit1}
\mu_{\Or_{n-1}}(\psi) = \hat{\mu}_{\Or_1}(\psi) = \int_{\F^\times\backslash \W_{\F^\times}}\int_{{\mathrm{End}}(\X)}\chi(\langle x w, w\rangle) \psi(x)\,dx\,d(\F^\times w) 
\end{equation}
where each consecutive integral is absolutely convergent. The right hand side defines a distribution $\Ss^*(\End(\X))$.
\end{thm}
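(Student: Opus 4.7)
The plan is to prove (ii) first by a Fourier-theoretic computation, from which both parts of (i) will follow.

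I begin by identifying $\Y$ with $\X^*$ via the symplectic form and associating to $w = x_0 + y_0 \in \W_{\F^\times}$ the rank-one endomorphism $R_w \in \Or_1$ defined by $R_w(v) = x_0 \cdot y_0(v)$. Since $R_{a(w)} = R_w$ under \eqref{Faction}, this descends to a bijection $\F^\times \backslash \W_{\F^\times} \to \Or_1$. Extending $x \in \End(\X)$ to $\tilde x \in \sp(\W)$ by $\tilde x|_\X = x$ and $\tilde x|_\Y = -x^*$, the isotropy of $\X$ and $\Y$ yields $\langle \tilde x w, w\rangle = 2\tr(x R_w)$. With $\hat\psi(\xi) := \int_{\End(\X)} \chi(2\tr(\xi x))\psi(x)\,dx$, the inner integral in \eqref{TFofminimalorbit1} is then absolutely convergent and equals $\hat\psi(R_w)$.

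Next I push $d(\F^\times w)$ forward via $w \mapsto R_w$ to a positive measure $\mu$ on $\Or_1$. From $(g,h) \cdot R_w = R_{(gx_0,\, h^{-T}y_0)}$ and the transformation of the Haar measure $dw$ one checks that $\mu$ satisfies \eqref{transformationproperty} with $k=1$, so by uniqueness (up to positive scalar) I normalize $\mu = \mu_{\Or_1}$. Local finiteness—hence temperedness of $\mu_{\Or_1}$ extended by zero to $\End(\X)$—is immediate: for $r = R_{(x_0,y_0)}$ with $\|r\| \le M$ one selects a representative with $|x_0| = 1$, forcing $|y_0| \le cM$, so preimages of compact subsets of $\Or_1$ have finite $d(\F^\times w)$-mass. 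This settles (i) for $k=1$, and the right-hand side of \eqref{TFofminimalorbit1} becomes $\mu_{\Or_1}(\hat\psi) = \hat\mu_{\Or_1}(\psi)$.

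It remains to identify $\hat\mu_{\Or_1}$ with $\mu_{\Or_{n-1}}$. A change of variables in the Fourier integral gives $\widehat{L(g)R(h)\psi} = |\det g|^n|\det h|^{-n}\,L(h)R(g)\hat\psi$; combining with the transformation law for $\mu_{\Or_1}$ yields
\[
\hat\mu_{\Or_1}(L(g)R(h)\psi) = |\det g|^{n-1}|\det h|^{-(n-1)}\,\hat\mu_{\Or_1}(\psi),
\]
which is \eqref{transformationproperty} with $k=n-1$. The main obstacle is to verify that any $(\GL(\X)\times\GL(\X))$-equivariant distribution on $\End(\X)$ with this transformation law is a scalar multiple of $\mu_{\Or_{n-1}}$. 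Stratifying the support by rank, an invariant distribution supported on the single orbit $\Or_j$ must be a scalar multiple of $\mu_{\Or_j}$, which transforms with exponent $j$ in \eqref{transformationproperty}; matching $j = n-1$ is forced, which simultaneously rules out lower-rank strata and a delta at the origin. Hence $\hat\mu_{\Or_1} = c\,\mu_{\Or_{n-1}}$, and the normalization of $\mu_{\Or_{n-1}}$ is chosen so that $c=1$. This establishes (ii), and (i) for $k=n-1$ follows since Fourier transforms of tempered distributions are tempered.
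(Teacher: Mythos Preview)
Your approach is genuinely different from the paper's and is largely sound, but the stratification/uniqueness step has a real gap.

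The gap is at the open orbit $\Or_n=\GL(\X)$. Your sentence ``an invariant distribution supported on the single orbit $\Or_j$ must be a scalar multiple of $\mu_{\Or_j}$, which transforms with exponent $j$'' is correct only for $j<n$: there the stabilizer is large enough that the relatively invariant measure, if it exists, has a uniquely determined character, namely exponent $j$. For $j=n$, however, the stabilizer of a point under $\GL(\X)\times\GL(\X)$ is the diagonal copy of $\GL(\X)$, on which every character $(g,h)\mapsto|\det g|^{k}|\det h|^{-k}$ restricts trivially. Hence $\Or_n$ carries a relatively invariant distribution with \emph{any} exponent; for $k=n-1$ it is $|\det x|^{-1}\,dx$. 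Your argument does not exclude the possibility that $\hat\mu_{\Or_1}|_{\GL(\X)}=c\,|\det x|^{-1}\,dx$ with $c\neq 0$. The gap is closable --- one shows that $|\det x|^{-1}$ is not locally integrable near $\{\det=0\}$ and, via the Laurent expansion of $\int|\det x|^{s-1}\psi(x)\,dx$ at $s=0$, that no equivariant extension with exponent $n-1$ exists --- but this is a genuine extra argument that you have not supplied, and without it the identification $\hat\mu_{\Or_1}=\mu_{\Or_{n-1}}$ (hence also (i) for $k=n-1$) is unproven.

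By contrast, the paper bypasses the uniqueness question entirely: it writes $\mu_1$ as an integral over $\K=\GL_n(\mathfrak o_\F)$ and $\F^n$, then computes $\hat\mu_1$ explicitly by performing the $\F^n$-integration as a Fourier transform, obtaining the formula
\[
\hat\mu_1(\psi)=\int_{\K}\int_{\F^{n(n-1)}}\psi^{\K}\!\begin{pmatrix}0&z\end{pmatrix}\,dz\,dk,
\]
which visibly shows that $\hat\mu_1$ is a positive measure supported on the rank $\le n-1$ locus and transforms with exponent $n-1$. This explicit form gives temperedness and the transformation property for $\mu_{\Or_{n-1}}$ directly, and the match with the right-hand side of \eqref{TFofminimalorbit1} then follows from the spherical-coordinates identity. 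Your conceptual route is attractive and, once the open-orbit obstruction is handled, would yield the same conclusion with less computation; the paper's route is more concrete and avoids the subtle extension question altogether.
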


By Part (ii) of the above theorem, we define
\begin{equation} \label{TFofminimalorbit2}
\chc(\psi) = \int_{\F^\times\backslash \W_{\F^\times}}\int_{{\mathrm{End}}(\X)}\chi(\langle x w, w\rangle) \psi(x)\,dx\,d(\F^\times w) \qquad (\psi\in\Ss(\End(\X))).
\end{equation}
It is a distribution in $\Ss^*(\End(\X))$.

\begin{thm} \label{TFofminimalorbitWF}
We have
\[
\WF(\chc)=\{(x,\tau_{{\mathrm{End}}(\X)^*}(w));\ \tau_{{\mathrm{End}}(\X)^*}(w)\ne 0\,,\ x(w)=0\}\,.
\]
\end{thm}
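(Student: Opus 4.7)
The plan is to run the same argument that closed the proof of Theorem \ref{Tminnilorb}(iii), now with the rank-one orbit $\Or_1\subseteq \End(\X)$ and its $\GL(\X)\times\GL(\X)$-equivariant measure $\mu_{\Or_1}$ playing the role of the minimal nilpotent orbit in $\sp(\W)$. By Theorem \ref{TFofminimalorbit}(ii), after identifying $\End(\X)^*$ with $\End(\X)$ via the trace pairing $(a,b)\mapsto \tr(ab)$, the distribution $\chc$ is the Fourier transform $\hat{\mu}_{\Or_1}$; so it suffices to compute $\WF(\mu_{\Or_1})$ and then transfer the description through Fourier duality.

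Away from the origin, $\Or_1$ is a smooth submanifold of $\End(\X)$ consisting of a single $\GL(\X)\times\GL(\X)$-orbit, and $\mu_{\Or_1}$ is a semi-invariant smooth measure on it. Applying Theorem \ref{Theorem 8.1.5Hormander1} to the submanifold $\Or_1$ identifies $\WF(\mu_{\Or_1})$, at nonzero base points, with the conormal bundle $N^*_{\Or_1}$. The tangent space at $x_0\in\Or_1$ is $T_{x_0}\Or_1=\{Ax_0+x_0 B : A, B\in \gl(\X)\}$, so via the trace pairing a nonzero $\xi\in\End(\X)$ is conormal at $x_0$ iff $\tr(\xi(Ax_0+x_0 B))=0$ for all $A,B$, i.e.\ iff $\xi x_0=0=x_0\xi$.

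Next I match the conormal condition with the right-hand side of the theorem. Every nonzero $x_0\in\Or_1$ can be written as $x_0=uv^*$ with $u\in\X\setminus\{0\}$ and $v^*\in\X^*\setminus\{0\}$. Identifying $\X^*$ with $\Y$ through the symplectic form, this $x_0$ corresponds to $w=u+v\in\X\oplus\Y=\W$, and a direct computation analogous to the one used in the proof of Lemma \ref{geometricII} shows that $\tau_{\End(\X)^*}(w)$ is a nonzero scalar multiple of $x_0$, so $x_0$ ranges over $\tau_{\End(\X)^*}(\W)\setminus\{0\}$ as $w$ varies. Under the embedding $\End(\X)\hookrightarrow \sp(\W)$ determined by the polarization $\W=\X\oplus \Y$, the conditions $\xi x_0=0$ and $x_0\xi=0$ translate respectively to $\xi u=0$ and (up to sign) $\xi^t v=0$, that is, to $\xi(w)=0$. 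This is exactly the set claimed in the theorem.

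Finally, to pass from $\WF(\mu_{\Or_1})$ to $\WF(\chc)=\WF(\hat{\mu}_{\Or_1})$, I would appeal to homogeneity exactly as at the end of the proof of Theorem \ref{Tminnilorb}(iii): the measure $\mu_{\Or_1}$ is homogeneous under the $\F^\times$-dilation on $\End(\X)$, so Theorem \ref{a1} lets me compute $\WF$ at the origin from the support of the Fourier transform and, combined with the conormal description at nonzero points, yields the claimed identity for $\WF(\chc)$. The main obstacle is the usual one: Hörmander's criterion only governs $\WF(\mu_{\Or_1})$ on the smooth locus $\Or_1$, so controlling the wave front at $0\in\End(\X)$ and transferring the computation through the Fourier transform genuinely requires the homogeneity argument together with Theorem \ref{a1}. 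A secondary bookkeeping point is keeping track of the normalization of $\tau_{\End(\X)^*}$ under the identification $\End(\X)^*\cong\End(\X)$, which pins down the precise form of the condition $x(w)=0$ in the statement.
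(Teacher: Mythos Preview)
Your proposal is correct and follows essentially the same route as the paper: compute $\WF(\mu_{\Or_1})$ via the conormal bundle of the rank-one orbit (Theorem \ref{Theorem 8.1.5Hormander1}) together with homogeneity at the origin, then invoke Theorem \ref{a1} to swap base and fiber and obtain $\WF(\chc)=\WF(\hat\mu_{\Or_1})$. The paper carries this out in matrix coordinates, arriving at $\WF(\mu_1)=\{(x,y):\ \mathrm{rk}\,x\le 1,\ \mathrm{rk}\,y\le n-1,\ xy=yx=0\}$ and then the swapped description for $\mu_{n-1}=\chc$; your invariant phrasing via $\tau_{\End(\X)^*}(w)$ and $x(w)=0$ is the same computation. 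One small presentational point: when you say ``this is exactly the set claimed in the theorem'' you have in hand the pair $(x_0,\xi)\in\WF(\mu_{\Or_1})$ with $x_0=\tau(w)$ and $\xi(w)=0$, which matches the theorem only after the Fourier swap $(x_0,\xi)\mapsto(\xi,-x_0)$ supplied by Theorem \ref{a1}; it would be cleaner to defer that identification until after you apply the swap.
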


\begin{rem}\label{TFofminimalorbitremark} 
The set where the derivative (i.e. the gradients) of $\det \colon \End(\V)\to\F$ is nonzero is equal to the subset where all of the minors of size $n-1$ are non-zero. This is the set of elements of rank at most $n-2$, i.e. $\Or_{n-2}\cup\Or_{n-3}\cup \ldots\cup\Or_0$. 
Hence the set where the derivative is non-zero is equal to $\End(\X)_{rk\geq n-1}=\Or_n\cup\Or_{n-1}$.
On this subset there is a well defined pullback $\det^*\delta_0=\delta_0\circ\det$. Explicitly, if $\phi_j\in C_c(\F)$, $\phi_j\geq 0$, $\int\phi_j(x)\,dx=1$ and the support of $\phi_j$ shrinks to zero if $j\to\infty$, then
\[
\det^*\delta_0(\psi)=\underset{{j\to\infty}}\lim\int_{{\mathrm{End}}(\X)} \phi_j(\det(x))\,\psi(x)\,dx
\qquad(\psi\in  C_c(\End(\X)_{rk\geq n-1}))\,.
\]
This follows from Corollary \ref{Hormender 8.2.3.cor} and Theorem \ref{Theorem 8.2.4}.
Since this distribution satisfies \eqref{transformationproperty}  for $k = n-1$,
it is convenient to think of $\chc$ as of a positive multiple of
$\delta_0\circ\det=\det_0^*\delta_0$: 
\begin{equation}\label{TFofminimalorbit5}
\chc(x)=c_0\delta_0(\det(x))\qquad (x\in \End(\X))\,.
\end{equation}
\end{rem}

\begin{prf}[ Proof of Theorem \ref{TFofminimalorbit}]

For $x=(x_1, \ldots , x_n)\in \F^n$ let 
\[
| x |=\max\{|x_1|, \ldots , |x_n|\} \,.
\]
The unit sphere 
$
S^{n-1}=\{x\in \F^n;\ |x|=1\}
$
may be thought of as a unit cube with decomposition into a disjoint union of facets 
\[
S^{n-1}_{i_1i_2\ldots i_k}=\{x\in \F^n;\ |x_{i_1}|=|x_{i_2}|=\ldots =|x_{i_k}|=1\,,\ |x_j|<1\,,\ j\notin\{i_1, i_2, \ldots , i_k\}\}\,,
\]
which are open subsets of $\F^n$:
\[
S^{n-1}=\bigcup_{k=1}^n\bigcup_{1\leq i_1<i_2<\ldots <i_k\leq n} S^{n-1}_{i_1i_2\ldots i_k}\,.
\]
Set
\[
\fo_\F^\times\backslash S^{n-1}_{i_1i_2\ldots i_k}=\{x\in \F^n;\ x_{i_1}=1\,,\ |x_{i_2}|=\ldots =|x_{i_k}|=1\,,\ |x_j|<1\,,\ j\notin\{i_1, i_2, \ldots , i_k\}\}
\]
and 
\[
\fo_\F^\times\backslash S^{n-1}=\bigcup_{k=1}^n\bigcup_{1\leq i_1<i_2<\ldots <i_k\leq n}
\fo_\F^\times\backslash S^{n-1}_{i_1i_2\ldots i_k}\,.
\]
Then the map
\[
\fo_\F^\times \times(\fo_\F^\times\backslash S^{n-1})\ni (a,x)\to ax\in S^{n-1}
\]
is bijective and
\[
\int_{S^{n-1}} \phi(x)\,dx=\int_{\fo_\F^\times\backslash S^{n-1}}\int_{\fo_\F^\times} \phi(ax)\,da\,dx
\qquad (\phi\in C(S^{n-1}))\,.
\]
Hence, we have the following formula for the ``integration in spherical coordinates",
\[
\int_{\F^n}\phi(x)\,dx
=\int_{\fo_\F^\times\backslash S^{n-1}}\int_{\F^\times} \phi(ax) |a|^n\,\frac{da}{|a|}\,dx
\qquad (\phi\in \Ss(\F^n))\,.
\]
Therefore,
\begin{eqnarray}\label{gl1gln2}
\lefteqn{\int_{\F^n}\int_{\F^n} \phi(x,y)\,dx\,dy 
=\int_{\F^n}\int_{\fo_\F^\times\backslash S^{n-1}}\int_{\F^\times} \phi(ax, y)\,|a|^n\,\frac{da}{|a|}\,\,dx\,dy} \nn\\
&=&\int_{\F^n}\int_{\fo_\F^\times\backslash S^{n-1}}\int_{\F^\times} \phi(ax, a^{-1}y)\,\frac{da}{|a|}\,\,dx\,dy\nn \quad \text{  (Substitute $y$ by $a^{-1}y$)} \\ 
&=&\int_{\fo_\F^\times\backslash S^{n-1}}\int_{\F^n}\left(\int_{\F^\times} \phi(ax, a^{-1}y)\,\frac{da}{|a|}\right)\,dy\,dx\,.
\end{eqnarray}
Define a measure $\mu_1$ on $\gl_n(\F)$ by
\begin{equation}\label{mesuremu1}
\mu_1(\psi)=\int_{\fo_\F^\times\backslash S^{n-1}}\int_{\F^n}\psi(x^ty)\,dy\,dx \qquad (\psi\in \Ss(\gl_n(\F)))\,.
\end{equation}
We shall compute the Fourier transform $\hat\mu_1$ of the measure $\mu_1$.  We have assumed in Section~\ref{The minimal nilpotent} that the kernel of the character $\chi$ is equal to $\fo_\F$ and that the volume of $\fo_\F$ is $1$. The Fourier transform of the Haar measure on $\F$ is the Dirac delta function at the origin:
\[
\phi(0)=\int_{\F}\int_{\F}\chi(-xy) \phi(x)\,dx\,dy \qquad (\phi\in \Ss(\F))\,.
\]
Let $\K=\GL_n(\fo_\F)$. This is a maximal compact subgroup of $\GL_n(\F)$. (In fact the unique maximal compact subgroup up to conjugation, see \cite[Appendix 1, Theorem 2, page 122]{SerreLieGroups}.) Let
\[
e_1=(1,0,0,\ldots , 0)\,.
\]
We normalize  the Haar measure $dk$ on the group $\K$ so that
\begin{equation}\label{normalizationofhaarmesureonK}
\mu_1(\psi)=\int_{\K}\int_{\F^n}\psi(ke_1^ty)\,dy\,dk \qquad (\psi\in \Ss(\g\l_n(\F)))\,.
\end{equation}
The Fourier transform of $\mu_1$ acting on a test function $\psi\in\Ss(\g\l_n(\F))$ may be computed as follows,
\begin{eqnarray}\label{gl1gln5}
\hat\mu_1(\psi)&=&\mu_1(\hat\psi) \nn \\
&=&\int_{\K}\int_{\F^n}\int_{\gl_n(\F)}\chi(-\tr(zke_1^ty))\psi(z)\,dz\,dy\,dk \\
&=&\int_{\K}\int_{\F^n}\int_{\gl_n(\F)}\chi(-\tr(ze_1^ty))\psi(zk)\,dz\,dy\,dk\nn\\
&=&\int_{\K}\int_{\F^n}\int_{\gl_n(\F)}\chi(-\sum_{j=1}^nz_{j,1}y_j)\psi(zk)\,dz\,dy\,dk\nn\\
&=&\int_{\K}\int_{\F^{n(n-1)}}\psi(
\left(
\begin{array}{llllll}
0&z_{1,2}&z_{1,3}& \ldots &z_{1,n}\\
0&z_{2,2}&z_{2,3}& \ldots &z_{2,n}\\
..&..&..& \ldots &..\\
0&z_{n,2}&z_{n,3}& \ldots &z_{n,n}\\
\end{array}
\right)k)\,dz\,dk\, .\nn
\end{eqnarray}
The last formula defines a positive measure supported on matrices of rank $n-1$ and shows that it extends by zero to a distribution on $\gl_n(\F)$. It is clear that this measure transforms according to \eqref{transformationproperty} with $k=n-1$, under the left translations.
We see from \eqref{mesuremu1} that measure $\mu_1$ transforms according to \eqref{transformationproperty} with $k=1$, under the right  translations. 
Therefore its  Fourier transform according to \eqref{transformationproperty} with $k=n-1$, under the right translations. This shows that $\hat\mu_1$ is a positive measure supported on $\Or_{n-1}$ that transforms the same way as the measure $\mu_{n-1}$ under the action of $\GL(\X)\times \GL(\X)$. Therefore 
with the correct normalization of the measure $\mu_{n-1}$ we have
\begin{equation}\label{gl1gln50-1}
\hat\mu_1=\mu_{n-1}.
\end{equation}
Since $|-2|_\F=1$, the change of variables, $y \mapsto -2y$ in \eqref{gl1gln5} shows that
\[
\hat\mu_1(\psi)=\int_{\K}\int_{\F^n}\int_{\gl_n(\F)}\chi(2\tr(zke_1^ty))\psi(z)\,dz\,dy\,dk\,.
\]
By \eqref{symplectic_form_II}
\begin{align*}
\langle z(ke_1^t,y),(ke_1^t,y)\rangle  & = \langle (zke_1^t,-yz),(ke_1^t,y)\rangle = \tr(zke_1^ty) + \tr(ke_1^t yz) \\
& = 2\tr(zke_1^ty).
\end{align*}
Hence,
\[
\hat\mu_1(\psi)=\int_{\K}\int_{\F^n}\int_{\gl_n(\F)}\chi(\langle z(ke_1^t,y),(ke_1^t,y)\rangle)\psi(z)\,dz\,dy\,dk\,.
\]
Equations \eqref{gl1gln2} and \eqref{normalizationofhaarmesureonK} imply that the integration over $\K\times \F^n$ is the same as the integration of an $\F^\times$-invariant function over the quotient $\F^\times\backslash \W_{\F^\times}$ 
Therefore 
\[
\hat\mu_1(\psi)=\int_{\F^\times\backslash \W_{\F^\times}}\int_{\gl_n(\F)}\chi(\langle z(w),w\rangle)\psi(z)\,dz\,d(\F^\times w)\,.
\]
By \eqref{TFofminimalorbit1}, the right hand side of the above equation is equal to $\chc(\psi)$. Together with \eqref{gl1gln50-1}, we get
\[
\chc = \hat\mu_1 = \mu_{n-1}.
\]
This proves \eqref{TFofminimalorbit1}. Hence the theorem follows.
\end{prf}

\begin{prf}[Proof of Theorem \ref{TFofminimalorbitWF}]
We compute the wave front sets of $\mu_1$ and $\mu_{n-1}$.
 Since $\mu_1$ is a homogeneous
distribution, Theorem \ref{a1} implies that the fiber of $\WF(\mu_1)$ over
$0\in\g\l_n(\F)$ coincides with $\supp \hat \mu_1=\g\l_n(\F)_{rk\leq n-1}$, the set of elements
of rank less or equal to $n-1$. The support of $\mu_1$ is the set of  matrices of rank at most $1$.
This is a single $\GL_n(\F)\times\GL_n(\F)$-orbit together with the zero matrix. Hence, by Theorem \ref{Theorem 8.1.5Hormander1},  a point $(x,y)\in (\supp \mu_1\setminus \{0\})\times \g\l_n(\F)$
belongs to $\WF(\mu_1)$ if and only if $y$ is perpendicular to the orbit through $x$:
\begin{eqnarray*}
\tr(zxy)=0\qquad (\text{for all } z\in \gl_n(\F))\,,\\
\tr(xzy)=0\qquad (\text{for all } z\in \gl_n(\F))\,.
\end{eqnarray*}
Hence $xy=0$ and, since $\tr(xzy)=\tr(yxz)$, $yx=0$ also. Thus
\begin{equation}\label{gl1gln6}
\WF(\mu_1)=\{(x,y)\in \g\l_n(\F)_{rk\leq 1}\times (\g\l_n(\F)_{rk\leq n-1}\setminus 0);\ xy=yx=0\}\,.
\end{equation}
Therefore Theorem \ref{a1} implies that
\begin{eqnarray}\label{gl1gln7}
\WF(\mu_{n-1})&=&\{(x,y)\in \g\l_n(\F)_{rk\leq n-1} \times (\g\l_n(\F)_{rk\leq 1}\setminus 0);\ xy=yx=0\}
\end{eqnarray}
Theorem \ref{TFofminimalorbitWF} follows the above calculations because $\chc = \mu_{\Or_{n-1}}$.
\end{prf}

\subsection{\bf The definition of $\chc_{x'}$.}\label{The definition}
Let $\G \cdot \G'\subseteq \Sp(\W)$ be an irreducible dual pair with the Lie algebra $\g \oplus \g'\subseteq \sp(\W)$. 
Let $\H'\subseteq \G'$ be a Cartan subgroup with the split part $\A'\subseteq \H'$, so that the quotient $\H'/\A'$ is compact. See sections \ref{A general linear group} and \ref{An isometry group}. 

\begin{lem}\label{question1}
Let $\A''\subseteq \Sp(\W)$ be the centralizer of $\A'$. Then $(\A'',\A')$ is a reductive dual pair in $\Sp(\W)$. 
Furthermore, there is an open dense $\A'$-invariant subset $\W_{\A'}\subseteq \W$ such that $\A'\backslash\W_{\A'}$ is a equipped with the $\A''$-invariant measure $d(\A' w)$ defined by 
\begin{eqnarray}\label{measureonthequotient0}
\int_\W\phi(w)\,dw=\int_{\A'\backslash\W_{\A'}}\int_{\A'} \phi(aw)\,da\,d(\A' w) \qquad (\phi\in \Ss(\W))\,.
\end{eqnarray}
\end{lem}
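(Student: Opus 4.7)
I would deduce the first assertion directly from the explicit descriptions given in the preceding sections: formulas \eqref{A''a''}--\eqref{A''A'''} identify $(\A'',\A')$ as a product of pairs $(\GL_{n_j}(\F),\GL_1(\F))$ acting on the summands $\W_j$ of the orthogonal decomposition \eqref{A'isotypicdecompositionW} in type II, while formulas \eqref{A''a''I.1}--\eqref{A''A'''I.2} give the analogous product together with a single factor $(\Sp_{2n_0}(\F),\Og_1(\F))$ acting on $\W^{ell}$ in type I. Since each factor is a classical reductive dual pair and $\W$ decomposes symplectic-orthogonally according to this product, the pair $(\A'',\A')$ is a reductive dual pair in $\Sp(\W)$.

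For the measure I would exploit the same decompositions and integrate factor by factor using Fubini, so that the essential computation concerns a single copy of $\F^\times$ acting on one summand. In type II this is the action of $\GL_1(\F)$ on $\W_j = \Hom_\Df(\V'_j,\V)\oplus\Hom_\Df(\V,\V'_j)$ of the form $a\cdot(x,y)=(xa,ya^{-1})$ coming from \eqref{Faction}; in type I it is the analogous action on $\W_j=\Hom(\X'_j,\V)\oplus\Hom(\Y'_j,\V)$. I would set
\[
\W_j^\circ := \{(x,y)\in\W_j \,:\, x\neq 0 \text{ and } y\neq 0\},
\]
an open dense $\F^\times$-invariant subset on which the action is free and proper. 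A short Jacobian calculation in polar-like coordinates --- using a scale coordinate $v=x$ along each orbit and $\F^\times$-invariant base coordinates built from $xy$ --- yields a factorization $dw_j=d^\times a \cdot d(\F^\times w_j)$ on $\W_j^\circ$, which both defines the quotient measure $d(\F^\times w_j)$ and establishes the required formula on that factor.

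Taking $\W_{\A'}:=\bigoplus_j\W_j^\circ$ (with the elliptic summand $\W^{ell}$ included unchanged in type I, where the residual action of the finite $\Og_1$ factor of $\A'$ is handled by discarding a measure-zero fixed-point locus), and iterating the single-factor result via Fubini over $j$, delivers \eqref{measureonthequotient0}. The $\A''$-invariance of $d(\A' w)$ is then automatic: $dw$ is $\A''$-invariant since $\A''\subseteq\Sp(\W)$, and $da$ is invariant under conjugation by $\A''$ (which centralizes $\A'$), so the uniqueness of the quotient measure forces it to be $\A''$-invariant as well. The main technical obstacle I anticipate is bookkeeping uniformly across types --- in particular choosing a single polar-coordinate parametrization that handles low-dimensional summands and the $\Og_1$ factor without ad hoc case splits --- but the underlying Haar-measure manipulations are routine.
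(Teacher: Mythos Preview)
Your proposal is correct and follows essentially the same route as the paper, whose proof consists solely of pointers to \eqref{A''a''}, \eqref{A''a''I.1}, and \eqref{measureonthequotient}; you are simply spelling out what those references mean.

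One small correction worth noting: in type I the group $\A'$ is the \emph{split} part $\F^\times\times\cdots\times\F^\times$ of $\H'$ and acts \emph{trivially} on $\W^{ell}$. The $\Og_1(\F)$ appearing in \eqref{A''A'''I.2} records only the double-centralizer structure on that summand, not a genuine factor of $\A'$ that needs to be quotiented out. Hence there is no ``residual $\Og_1$ action'' to handle and no fixed-point locus to discard on $\W^{ell}$: that summand passes through unchanged, and the quotient by $\A'$ is taken only over the split factors $\W_j$, each of which is governed by \eqref{measureonthequotient}.
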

\begin{prf}
See \eqref{A''a''},  \eqref{A''a''I.1} and \eqref{measureonthequotient}. 
\end{prf}
Define a unitary Gaussian
\begin{eqnarray}\label{chix}
\chi_x(w)=\chi(\frac{1}{4}\langle x(w),w\rangle)\qquad (x\in\sp(\W),\ w\in \W)\,.
\end{eqnarray}
\begin{lem}\label{question2}
Let $\a''$ be the Lie algebra of $\A''$. Then
\begin{eqnarray}\label{question2.1}
\int_{\A'\backslash\W_{\A'}}\left|\int_{\a''}\psi(x)\chi_x(w)\,dx\right| \,d(\A' w)<\infty \qquad (\psi\in\Ss(\a''))
\end{eqnarray}
so the formula
\begin{equation}\label{question2.2a}
\chc(\psi)=\int_{\A'''\backslash\W_{\A'''}}\int_{\a''}\psi(x)\chi_x(w)\,dx\,d(\A' w) \qquad (\psi\in\Ss(\a''))
\end{equation}
defines a distribution on $\a''$. The wave front set of this distribution is equal to
\begin{eqnarray}\label{question2.2}
\WF(\chc)= \left\{(x,\tau_{\a''}(w)) \in \a'' \times (\a''^*\setminus 0) :\ x(w)=0\,,\ w\in \W \right\} \,.
\end{eqnarray}
\end{lem}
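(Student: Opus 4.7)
The plan is to reduce the statement to a product of the two model cases already established in Theorems \ref{Tminnilorb} and \ref{TFofminimalorbit}. First, using the isomorphisms \eqref{A''a''} in type II, or \eqref{A''a''I.1} in type I, I would write $\a''=\bigoplus_j \a''_j$, where in type II each factor has the form $\End_\F(\Hom_\Df(\V'_j,\V))$, while in type I there is the additional factor $\sp(\W^{ell})$ corresponding to the elliptic summand. Correspondingly $\W$ splits as the direct orthogonal sum of symplectic subspaces $\W_j$ as in \eqref{A'isotypicdecompositionW} or \eqref{A'isotypicdecompositionWI}, the moment map $\tau_{\a''{}^*}$ splits as $\bigoplus_j \tau_{\a''_j{}^*}$, and the Gaussian factors as $\chi_x(w)=\prod_j \chi_{x_j}(w_j)$.

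Second, I would check that $\A'$ respects this decomposition. In type I the split torus $\A'$ acts trivially on $\V'{}^{ell}$, since the split part of a compact Cartan is trivial, hence trivially on $\W^{ell}$. On each $\W_j$ with $j\geq 1$, the group $\A'$ acts through a single copy of $\F^\times$ via the natural action on the complete polarization $\Hom(\X'_j,\V)\oplus\Hom(\Y'_j,\V)$, matching exactly the setup of \eqref{Faction}. Consequently, by Lemma \ref{question1}, the orbit measure $d(\A'w)$ factors and $\A'\backslash\W_{\A'}$ becomes a product of the orbit spaces appearing in Theorem \ref{Tminnilorb} (the trivial quotient on the symplectic factor) and Theorem \ref{TFofminimalorbit} (the quotient $\F^\times\backslash\W_{\F^\times}$ on each GL factor).

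Third, for product test functions $\psi(x)=\prod_j \psi_j(x_j)$ the integral \eqref{question2.2a} factors as a product of integrals, each absolutely convergent by the relevant theorem. Fubini and a density argument extend the absolute convergence \eqref{question2.1} and the defining formula to arbitrary $\psi\in\Ss(\a'')$, and identify $\chc$ as the external tensor product $\bigotimes_j \chc_j$ of the distributions produced by Theorems \ref{Tminnilorb} and \ref{TFofminimalorbit}.

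Finally, for the wave front set I would invoke the standard rule for a tensor product of distributions: a covector $(x,\xi)\in\a''\times(\a''{}^*\setminus 0)$ with $\xi=(\xi_j)$ lies in $\WF(\chc)$ if and only if $(x_j,\xi_j)\in\WF(\chc_j)$ for every $j$ with $\xi_j\neq 0$. Substituting \eqref{TFofminimalnilpotentorbit4} and Theorem \ref{TFofminimalorbitWF} and assembling the resulting conditions yields the existence of $w=(w_j)\in\W$ with $\xi_j=\tau_{\a''_j{}^*}(w_j)$ and $x_j(w_j)=0$ for every $j$, and not all $\tau_{\a''_j{}^*}(w_j)$ zero, which is exactly $\xi=\tau_{\a''{}^*}(w)\neq 0$ and $x(w)=0$ as in \eqref{question2.2}. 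The main obstacle is the wave front set bookkeeping for indices where $\xi_j=0$: for such $j$ one selects $w_j=0$, which always lies in $\ker x_j$ and contributes nothing to $\tau_{\a''{}^*}(w)$, so the product description collapses into the single clean geometric condition $x(w)=0$, $\tau_{\a''{}^*}(w)\neq 0$.
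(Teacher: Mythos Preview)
Your approach is exactly the paper's: the proof there is a single sentence invoking \eqref{A''a''}, \eqref{A''a''I.1}, and Theorems \ref{Tminnilorb}, \ref{TFofminimalorbit}, \ref{TFofminimalorbitWF}, and you have simply spelled out what that invocation means. The reduction to a product of model pairs, the factorization of the Gaussian and the quotient measure, and the identification $\chc=\bigotimes_j\chc_j$ are all correct.

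One caution on the wave front step. Your ``standard rule'' for the tensor product, stated as an if-and-only-if with no support condition, is a bit too strong: the H\"ormander bound only guarantees that $(x,\xi)\in\WF(\bigotimes_j\chc_j)$ implies each $(x_j,\xi_j)$ with $\xi_j\neq 0$ lies in $\WF(\chc_j)$ \emph{and} $x_j\in\supp\chc_j$ for the remaining indices. Your construction (set $w_j=0$ when $\xi_j=0$) therefore yields the inclusion $\WF(\chc)\subseteq S_{\a''}$ cleanly, but the reverse inclusion as stated in \eqref{question2.2} is delicate when there are several $\GL$ factors: a point $(x,\xi)$ with some $x_j$ invertible and $\xi_j=0$ satisfies the geometric condition $x(w)=0$, $\tau_{\a''{}^*}(w)=\xi$ (take $w_j=0$), yet lies outside $\supp\chc$ since $\supp\mu_{n_j-1}$ consists of matrices of rank $\le n_j-1$. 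This is harmless for the sequel (Proposition~\ref{proposition1.8} and Definition~\ref{Definition 1.9.} only use the inclusion $\WF(\chc)\subseteq S_{\a''}$), so your argument proves everything that is actually needed.
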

\begin{prf}
This is immediate from \eqref{A''a''},  \eqref{A''a''I.1} and 
{Theorems \ref{Tminnilorb}}, \ref{TFofminimalorbit},   and \ref{TFofminimalorbitWF}.
\end{prf}
\begin{pro}\label{proposition1.8}
Fix an element $x'\in \reg{\h'}$. Then the intersection of the wave front set $\WF(\chc)$ with the conormal bundle to the embedding
\begin{equation}\label{proposition 1.8.1.}
\g\ni x\to x'+x\in \a''
\end{equation}
is empty.
\end{pro}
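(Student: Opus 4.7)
The plan is to reduce the statement to the geometric disjointness lemmas (Lemma \ref{geometricII} for type II pairs and Lemma \ref{geometricI} for type I pairs) already established in Section \ref{Some geometry of moment maps for irreducible dual pairs of type I} and its type II counterpart. The key observation is that the wave front set described in Lemma \ref{question2} is literally the set $S_{\a''}$ appearing in those lemmas.

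First, I would unwind the definitions. By Lemma \ref{question2} we have
\[
\WF(\chc)=\{(x,\tau_{\a''^*}(w))\in\a''\times(\a''{}^*\setminus 0):\ x(w)=0,\ w\in\W\},
\]
which is exactly the set $S_{\a''}$ defined in \eqref{geometricII.1} (type II) and \eqref{geometricI.1} (type I). On the other hand, the conormal bundle $N_{\iota_{\g,x'}}$ to the embedding \eqref{proposition 1.8.1.} is described in \eqref{geometricII.2'}. Thus the claim $\WF(\chc)\cap N_{\iota_{\g,x'}}=\emptyset$ becomes exactly $S_{\a''}\cap N_{\iota_{\g,x'}}=\emptyset$.

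Second, I would split into the two cases according to whether $(\G,\G')$ is of type II or of type I, and invoke the corresponding lemma. In the type II case, the decomposition \eqref{A''a''} reduces everything to products of $(\GL_{n_j}(\F),\GL_1(\F))$ pairs, and Lemma \ref{geometricII} gives the required disjointness; its proof used the regularity of $x'$ through Lemma \ref{regular elements generate} to conclude that the components $x_jy_j$ all vanish. In the type I case, the decomposition \eqref{A''a''I.1} has an additional symplectic factor $(\Sp_{2n_0}(\F),\Og_1(\F))$ corresponding to the elliptic part $\V'{}^{ell}$, and Lemma \ref{geometricI} provides the analogous conclusion using Lemma \ref{regular elements generate I}.

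There is no real obstacle here: the substantive work is contained in Lemmas \ref{geometricII} and \ref{geometricI}, which were designed precisely to support this proposition. The only thing to check carefully is the bookkeeping that the wave front set computed from Theorems \ref{Tminnilorb}, \ref{TFofminimalorbit}, \ref{TFofminimalorbitWF} (combined via the product decomposition of $\a''$) agrees with $S_{\a''}$ as formulated in the earlier lemmas, but this is precisely what Lemma \ref{question2} records. Once this identification is made, the proof reduces to a one-line citation of Lemma \ref{geometricII} or \ref{geometricI}.
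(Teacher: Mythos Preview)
Your proposal is correct and follows essentially the same approach as the paper: identify $\WF(\chc)$ with the set $S_{\a''}$ via Lemma \ref{question2}, and then invoke Lemmas \ref{geometricII} and \ref{geometricI} to obtain $S_{\a''}\cap N_{\iota_{\g,x'}}=\emptyset$. The paper's proof is just the two-line version of what you wrote.
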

\begin{prf}
By \eqref{question2.2}, $\WF(\chc)$ is equal to the set $S_{\a''}$, defined in \eqref{geometricII.1} and \eqref{geometricI.1}. The proposition follows from Lemmas \ref{geometricII} and \ref{geometricI}.
\end{prf}

Standard micro-local analysis, i.e. Theorem \ref{Theorem 8.2.4}, together with Proposition \ref{proposition1.8}
justify the following definition.

\begin{defi}\label{Definition 1.9.}
For $x'\in \reg{\h'}$ define $\chc_{x'}$ to be the pullback of the distribution $\chc$ to  $\g$ via the embedding
\[
\g\ni x\to x'+x \in \a''\,.
\]
Formally
\begin{equation}\label{question2.2ag}
\chc_{x'}(x)=\chc(x'+x)=\int_{\A'\backslash \W_{\A'}} \chi_{x'+x}(w)\,d(\A' w) \qquad (x\in \g)\,.
\end{equation}
\end{defi}

\begin{lem}\label{Lemma 1.10.}
For any $x'\in \reg{\h'}$ 
\[
\WF(\chc_{x'})\subseteq\{(x,\tau_{\g^*}(w));\ (x'+x)(w)=0,\ x\in\g,\ w\in \W\}\,.
\]
\end{lem}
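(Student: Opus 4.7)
The plan is to appeal directly to H\"ormander's pullback theorem for distributions, namely Theorem \ref{Theorem 8.2.4}, which is already the tool used in Definition \ref{Definition 1.9.} to give $\chc_{x'} = \iota_{\g,x'}^{*}\chc$ its meaning. Proposition \ref{proposition1.8} supplies the noncharacteristic hypothesis $\WF(\chc) \cap N_{\iota_{\g,x'}} = \emptyset$, so the same theorem not only produces the pullback distribution but also delivers the estimate
\[
\WF(\chc_{x'}) \;\subseteq\; \iota_{\g,x'}^{*}\WF(\chc)\,,
\]
where the right-hand side is the image of $\WF(\chc)$ over the graph of $\iota_{\g,x'}$ under the transpose of $d\iota_{\g,x'}$. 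The whole proof is then the unpacking of this inclusion in the coordinates of the statement.

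Next I would compute the differential of $\iota_{\g,x'}$. Since $\iota_{\g,x'}$ is the affine translation $x \mapsto x'+x$ of the linear inclusion $\g \hookrightarrow \a''$, its differential at every point is that same inclusion, and its transpose is the restriction map $\a''^{*} \to \g^{*}$, $\eta \mapsto \eta|_{\g}$. Combining this with the description of $\WF(\chc)$ from \eqref{question2.2} in Lemma \ref{question2}, a covector $(x,\xi)\in T^{*}\g$ belongs to $\iota_{\g,x'}^{*}\WF(\chc)$ precisely when $\xi = \tau_{\a''^{*}}(w)|_{\g}$ for some $w \in \W$ with $(x'+x)(w) = 0$ and $\tau_{\a''^{*}}(w) \neq 0$.

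Finally, by the defining formula \eqref{moment_map_II} of the unnormalized moment map together with \eqref{identification_of_moment_maps_II} (respectively \eqref{identification_of_moment_maps_I} in the type I setting), the restriction of $\tau_{\a''^{*}}(w)$ to $\g \subseteq \a''$ agrees with $\tau_{\g^{*}}(w)$ under the identifications fixed in Sections \ref{Some geometry of moment maps for irreducible dual pairs of type II} and \ref{Some geometry of moment maps for irreducible dual pairs of type I}. Substituting this back yields
\[
\WF(\chc_{x'}) \;\subseteq\; \{(x,\tau_{\g^{*}}(w));\ (x'+x)(w)=0,\ x\in\g,\ w\in\W\}\,,
\]
which is the claimed inclusion. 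There is no genuine obstacle: the lemma is purely a bookkeeping translation of the abstract pullback wave front estimate, the only nontrivial input being the transversality already established in Proposition \ref{proposition1.8}; one only has to be mindful that covectors vanishing under restriction to $\g$ drop out of the pullback wave front set, which is consistent with the $\subseteq$ formulation.
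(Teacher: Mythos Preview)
Your proposal is correct and follows precisely the approach of the paper's own proof, which simply cites Definition \ref{Definition 1.9.}, Lemma \ref{question2}, and Theorem \ref{Theorem 8.2.4} and notes that the right-hand side is the image of $\WF(\chc)$ under the projection $T^*_{x'+x}\a'' \to T^*_x\g$. You have supplied the bookkeeping that the paper leaves implicit---identifying the differential of $\iota_{\g,x'}$ with the inclusion, its transpose with restriction, and recognizing $\tau_{\a''{}^*}(w)|_\g = \tau_{\g^*}(w)$ from the defining formula \eqref{moment_map_II}---but the argument is the same.
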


\begin{prf}
This follows from the Definition \ref{Definition 1.9.}, Lemma \ref{question2} and Theorem \ref{Theorem 8.2.4}. Note that the right hand side is the image of $\WF(\chc)$ the under projection map $T_{x'+x}^* \a'' \rightarrow T_x^* \g$.
\end{prf}

\section{\bf The distributions $\Chc_{h'}\in C_c^\infty{}^*(\wt{\G})$, $h'\in\reg{\wt{\H'}}$.}\label{The distributionChcx'}

\subsection{\bf The character of the Weil representation of $\Sp(\W)$.}\label{The character of the Weil representation}
\ 
Recall the lattice $\mathcal L\subseteq \W$ fixed in section \ref{The minimal nilpotent}.
For an element $g\in \Sp(\W)$ choose a basis $\{ u_1, u_2, \ldots, u_m \}$ of $(g-1)\W$, such that
\[
\mathcal L\cap (g-1)\W =\fo_\F u_1+\fo_\F u_2+\ldots +\fo_\F u_m\,.
\]
For the existence of such a basis see \cite[Theorem 1, page 29]{Weil_Basic}. Let $w_1$, $w_2$, ..., $w_m$ be elements of $\W$ such that 
\[
\langle u_j,w_k\rangle =\delta_{j,k}\,.
\]
We recall the Gaussian
\[
\gamma(a)=\int_\F\chi(\frac{1}{2}ax^2)\,dx \qquad (a\in \F^\times)\,.
\]
Define the following function on the symplectic group,
\begin{equation}\label{Thetasquare}
\Theta^2(g)=\gamma(1)^{2\dim \W}\gamma(1)^{-2}\gamma(\det(\langle (g-1)w_j,w_k\rangle)_{1\leq j,k\leq m})^2\qquad (g\in \Sp(\W))\,.
\end{equation}
(Here the square $\Theta^2$ is part of the name of this function. At this point we don't have any function $\Theta$ such that $\Theta^2=(\Theta)^2$.)
One may check that the above definition does not depend on the choices of the $u_j$ and $w_k$ we made, see \cite[Definition 5.15]{AubertPrzebinda_omega}.
Let
\[
\wt{\Sp}(\W)=\{(g,\xi)\in \Sp(\W)\times\C^\times;\ \xi^2=\Theta^2(g)\}
\]
Then there is a cocycle $C$ on the symplectic group such that the formula
\[
(g_1,\xi_1)(g_2,\xi_2)=(g_1g_2,\xi_1\xi_2C(g_1,g_2))
\]
defines a group structure on $\wt{\Sp}(\W)$. The cocycle satisfies the following equation
\[
C(g_1,g_2)^2=\frac{\Theta^2(g_1g_2)}{\Theta^2(g_1)\Theta^2(g_2)} \qquad (g_1, g_2\in \Sp(\W))\,,
\]
see \cite[Lemma 5.16]{AubertPrzebinda_omega} and is explicitly described in \cite[Proposition 5.12]{AubertPrzebinda_omega}. Moreover the map
\[
\wt{\Sp}(\W)\ni (g,\xi)\to g\in \Sp(\W)
\]
is a group homomorphism with each fiber consisting of two elements. Thus $\wt{\Sp}(\W)$ is a double cover of $\Sp(\W)$ and is called the metaplectic group. 
For any $g\in \Sp(\W)$ define
\[
c(g)u=(g+1)w\,,\ \ \ u=(g-1)w\,,\ \ \  w\in\W
\] 
and
\[
\chi_{c(g)}(u)=\chi(\frac{1}{4}\langle c(g)u,u\rangle)\qquad (u\in (g-1)\W)\,.
\]
(If $g-1$ is invertible, then $c(g)=(g+1)(g-1)^{-1}$ is the Cayley transform, a birational isomorphism 
$c:\Sp(\W)\to \sp(\W)$, equal to its own inverse.)  
For any subspace $\Uv\subseteq \W$ let $\mu_\Uv$ be the Haar measure on $\Uv$ normalized so that the volume of $\mathcal L\cap \Uv$ is $1$. Define the following functions
\begin{eqnarray}
& & \Theta: \wt{\Sp}(\W)\ni (g,\xi)\to \xi\in \C^\times\,, \nn \\
& & T \colon \wt{\Sp}(\W)\ni (g,\xi)\to \xi\chi_{c(g)}\mu_{(g-1)\W}\in\Ss^*(\W)\,.
\label{eqdefnT}
\end{eqnarray}
The complex valued measure $\chi_{c(g)}\mu_{(g-1)\W}$ is viewed as a distribution on $\W$.
The map~$T$ is an injection of the metaplectic group into the space of the tempered distributions on~$\W$ with the following properties,
\begin{eqnarray*}
T(\t g_1)\natural T(\t g_2)&=&T(\t g_1 \t g_2) \qquad (\t g_1, \t g_2\in \wt{\Sp}(\W))\\
T(\t g)^*&=&T(\t g^{-1}) \qquad (\t g\in \wt{\Sp}(\W)) \\
T(1)&=&\delta_0\,,
\end{eqnarray*}
where the twisted convolution 
\[
\phi\natural \psi(w')=\int_\W \phi(w)\psi(w'-w)\chi(\frac{1}{2}\langle w,w'\rangle)\,d\mu_\W(w)
\qquad (\phi, \psi\in \Ss(\W))
\]
extends by continuity to some tempered distributions so that $T(\t g_1)\natural T(\t g_2)$ makes sense, see \cite[Lemm 5.23]{AubertPrzebinda_omega}. In particular
\begin{equation}\label{CocycleAndCharacter}
C(g_1,g_2)=\frac{\Theta(\t g_1\t g_2)}{\Theta(\t g_1)\Theta(\t g_2)} \qquad (\t g_1, \t g_2\in \wt{\Sp}(\W))\,,
\end{equation}
see \cite[ Proof of Lemm 5.23]{AubertPrzebinda_omega}. In addition, if $g_1$, $g_2$ and $g_1g_2$ are in the domain of the Cayley transform, then
\begin{equation}\label{CocycleAschc}
C(g_1,g_2)=\int_\W\chi(\frac{1}{4}\langle (c(g_1)+c(g_2))(w),w\rangle \,dw\,,
\end{equation}
see \cite[Proposition 5.12]{AubertPrzebinda_omega} specialized to this particular case.

Furthermore, we set
\[
\phi^*(w)=\overline{\phi(-w)}\,,\ \ \ u^*(\phi)=u(\phi^*)\qquad (w\in\W\,,\ \phi\in\Ss(\W)\,,\ u\in \Ss^*(\W))\,.
\]
In particular, if $T(\t g)$ is a function, then $T(\t g)^*(w) = \overline{T(\t g)(-w)}$.
It turns out that $\Theta$ is a locally integrable function that coincides with the distribution character of the Weil representation, see \cite[Theorem 5.26]{AubertPrzebinda_omega}, a direct sum of two irreducible components. 

Suppose  $\G$ is an analytic Lie group with the Lie algebra $\g$ over $\F$. (Here we think in terms of locally analytic functions, \cite[Part II]{SerreLieGroups}.)
Each $Y\in\g$ defines a left invariant vector field on $\G$ via the right regular representation, 
\begin{equation}\label{001}
R_Y\Psi(x)=\frac{d}{dt}\left.\Psi(x\exp(tY))\right|_{t=0} \qquad (x\in\G,\ \Psi\in C_c^\infty(\G))\,.
\end{equation}
Given a local chart
\[
\kappa:\G^\kappa\to \g^\kappa\,,
\]
where $\g^\kappa$ is an open subset of $\g$.
We can lift the vector field \eqref{001} to $\g^\kappa$ by the formula
\[
\kappa_*(R_Y)\psi(X)=R_Y(\psi\circ\kappa)(\kappa^{-1}(X))
 \qquad (X\in\g^\kappa,\ \psi\in C_c^\infty(\g^\kappa))\,.
\]
Explicitly,
\[
\kappa_*(R_Y)\psi(X)=\frac{d}{dt}\left.\psi(X+tv_X(Y))\right|_{t=0}\,,
\]
where
\begin{equation}\label{vX}
v_X(Y)= \frac{d}{dt}\left.\kappa(\kappa^{-1}(X)\exp(tY))\right|_{t=0}\,.
\end{equation}
Thus we have the following bijection
\begin{equation}\label{111}
\G^\kappa\times \g\ni (x,Y)\leftrightarrow (\kappa(x),v_{\kappa(x)}(Y))\in \g^\kappa\times\g\,.
\end{equation}
By dualizing \eqref{111} we get the following bijection
\begin{equation}\label{011}
\G^\kappa\times \g^*\ni (x,v_{\kappa(x)}^*\eta)\leftrightarrow (\kappa(x),\eta)\in \g^\kappa\times\g^*\,,
\end{equation}
where
\[
v_{\kappa(x)}^*\eta=\eta\circ v_{\kappa(x)}\,.
\]
According to \cite[Definition 6.4.1]{Hormander} an atlas on the tangent and cotangent bundle may be seen as
\[
\bigcup_{x\in\G^\kappa}T_x(\G)=\{(\kappa(x),Y);\ x\in \G^\kappa,\ Y\in \g\}
\]
and therefore
\[
\bigcup_{x\in\G^\kappa}T_x^*(\G)=\{(\kappa(x),\eta);\ x\in \G^\kappa,\ \eta\in \g^*\}\,.
\]
Hence the bijections \eqref{011} define 
an identification $\G\times \g^*=T^*\G$ such that for a given $\F$-valued compactly supported smooth function $\Psi$ we regard $d\Psi$ as a $\g^*$-valued function on $\G$ by
\begin{equation}\label{002}
d\Psi(x)(X)=R_Y\Psi(x) \qquad (x\in \G,\ X\in \g)\,,
\end{equation}
as in \cite{HoweWave}. In particular, under this identification, 
\begin{equation}\label{022}
\WF(u)=\{(x, v_{\kappa(x)}^*\eta);\ (\kappa(x),\eta)\in \WF((\kappa^{-1})^*u)\} \qquad (u\in D'(\G^\kappa))\,,
\end{equation}
where $(\kappa^{-1})^*u$ is a distribution on $\g^\kappa$. We now specialize the above to $\G = \wt{\Sp}(\W)$ and $\g = \sp(\W)$.

\begin{pro}\label{WFTheta}
$\WF(\Theta)=\{(g,\tau_{\sp^*(\W)}(w));\ g\in \wt{\Sp}(\W),\ w\in \W\setminus\{0\},\ gw=w\}$.
\end{pro}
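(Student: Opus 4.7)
The strategy is to exploit the integral-kernel description $T(\tilde g) = \Theta(\tilde g)\,\chi_{c(g)}\,\mu_{(g-1)\W}$ from \eqref{eqdefnT}, which locally writes $\Theta$ as a ratio of the known distribution $T$ to an explicit oscillatory density on the subspace $(g-1)\W$. Combining wave-front-set microlocal calculus with the identification of $\chc$ from Theorem \ref{Tminnilorb} will yield the result.

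\textbf{Smooth locus.} On the open set $\{g : \ker(g-1) = 0\}$, the matrix $(\langle (g-1)w_j, w_k\rangle)$ in \eqref{Thetasquare} has nonzero determinant, and since the Weil index $\gamma$ is locally constant on $\F^\times$, the function $\Theta$ is itself locally constant. Hence $\WF(\Theta)$ is empty there, which matches the right-hand side (there is no $w \ne 0$ with $gw = w$). It remains to analyze points $g_0$ with $\W^{g_0} = \ker(g_0-1) \ne 0$.

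\textbf{Singular locus.} At such $g_0$, I would localize using the cocycle identity \eqref{CocycleAndCharacter}, $\Theta(\tilde g_0 \tilde h) = C(g_0, h)\,\Theta(\tilde g_0)\,\Theta(\tilde h)$. By \eqref{CocycleAschc}, $C(g_0, h)$ is itself a Weil-index-type Gaussian integral whose singularities can be controlled by stationary-phase arguments in the spirit of Lemmas \ref{geometricII} and \ref{geometricI}. This reduces the problem to computing $\WF(\Theta)$ at the identity. Near $g = 1$, the Cayley transform $c^{-1}\colon \sp(\W) \to \wt{\Sp}(\W)$ provides a local chart, and by comparing the oscillatory integral on $(g-1)\W$ in \eqref{eqdefnT} to the kernel defining $\chc$ in \eqref{TFofminimalnilpotentorbit1}, one sees that the singular behaviour of the pullback of $\Theta$ coincides, up to a smooth nonvanishing factor, with that of $\chc$. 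Theorem \ref{Tminnilorb}(iii) then gives
\[
\WF(\chc) = \{(x, \tau_{\sp^*(\W)}(w)) : x(w) = 0,\ \tau_{\sp^*(\W)}(w) \ne 0\},
\]
whose fibre over $x = 0$ (i.e.\ over $g = 1$) equals $\{\tau_{\sp^*(\W)}(w) : w \ne 0\}$, matching the claimed formula because $\W^1 = \W$. Transporting this computation by the left action of $g_0$ and reinterpreting the fixed-vector condition gives $\WF(\Theta)$ at $g_0$ as $\{\tau_{\sp^*(\W)}(w) : w \in \W^{g_0} \setminus\{0\}\}$, as required.

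\textbf{Main obstacle.} The delicate point is the jump in $\dim (g-1)\W$ at $g_0$: the support of $T(\tilde g)$ changes dimension as $g$ moves past $g_0$, so $T$ cannot naively be treated as a smooth family of distributions. I would handle this by carefully applying microlocal pullback and pushforward through the inclusion $(g-1)\W \hookrightarrow \W$ using Theorems \ref{Theorem 8.2.4} and \ref{Theorem 8.1.5Hormander1}, and by verifying that the transversality conditions required for the WF calculus hold along the critical locus $\{(g, w) : gw = w, w \ne 0\}$. This verification is the microlocal analogue of the clean-intersection arguments already used in Lemmas \ref{geometricII} and \ref{geometricI}.
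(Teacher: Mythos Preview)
Your overall instinct—relate $\Theta$ to $\chc$ via the cocycle identity and the Cayley transform—is correct, but the way you assemble the pieces contains a concrete error that would block the argument.

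First, the Cayley transform $c(g)=(g+1)(g-1)^{-1}$ sends $-1\mapsto 0$, not $1\mapsto 0$; it is \emph{undefined} at $g=1$. So the Cayley map does not give a chart near the identity, and your plan to ``reduce to computing $\WF(\Theta)$ at the identity and use the Cayley chart there'' cannot be carried out as written. Relatedly, translating by a \emph{singular} $g_0$ (one with $\ker(g_0-1)\ne 0$) is exactly the wrong choice: the explicit formula \eqref{CocycleAschc} for $C(g_1,g_2)$ you want to invoke requires $g_1,g_2,g_1g_2$ all to lie in $\Sp^c(\W)$, which fails for your $g_0$.

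The paper avoids this by translating instead by a \emph{regular} element $\tilde x_0\in\wt{\Sp^c}(\W)$. Then for $\tilde x\in\wt{\Sp^c}(\W)$ the cocycle formula applies, $\Theta(\tilde x)$ is smooth (since $\det(x-1)\ne 0$), and one obtains the clean factorization
\[
\tilde c^*\bigl(\Lambda_{\tilde x_0}^*\Theta\bigr)=\Theta(\tilde x_0)\cdot \tilde c^*\Theta\cdot \lambda_{X_0}^*\chc,
\]
so that $\WF(\tilde c^*u)=\WF(\lambda_{X_0}^*\chc)$, which is known from \eqref{TFofminimalnilpotentorbit4}. The condition $(X_0+X)w=0$ is then converted to the fixed-point condition $x_0x\,w'=w'$ via the algebraic identity $X_0+X=2(x_0-1)^{-1}(x_0x-1)(x-1)^{-1}$ with $w'=(1-X)w$—a step your sketch does not indicate. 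Finally, since $\Sp^c(\W)\cdot\Sp^c(\W)=\Sp(\W)$ (Lemma~\ref{GcGc}), letting $x_0$ and $x$ range over $\Sp^c(\W)$ covers every group element, so no separate analysis of the ``jump in $\dim(g-1)\W$'' is needed. This is the key trick you are missing: translate by a generic element rather than a singular one, and let the product $\Sp^c\cdot\Sp^c$ reach the singular locus.
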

\begin{prf}
Define the translations
\[
\Lambda_{\t x_0}:\wt{\Sp}(\W)\ni \t x\to \t x_0\t x\in \wt{\Sp}(\W) \qquad (\t x_0\in \wt{\Sp}(\W))
\]
and 
\[
\lambda_{X_0}:\g\ni X\to X_0+X\in\g \qquad (X_0\in \g)\,.
\]
Let $\Sp^c(\W)=\{g\in\Sp(\W);\ \det(g-1)\ne 0\}$ be the domain of the Cayley transform. 
We fix an element $\t x_0\in \wt{\Sp^c}(\W)$ and consider the distribution $u\in \Ss^*(\wt{\Sp^c}(\W))$ defined by
\[
u(\t x)=\Theta(\t x_0\t x)\qquad (\t x\in\wt{\Sp^c}(\W))
\]
times the Haar measure. In terms of the translation defined above,  $u=\Lambda_{x_0}^*\Theta|_{\Sp^c(\W)}$. Set $X_0=\t c(x_0)$. 
Then \eqref{CocycleAndCharacter} and \eqref{CocycleAschc} imply that as a distribution on $\sp(\W)^c$,
\begin{equation}\label{(12.6)}
\t c^* u =\Lambda_{x_0}^*\Theta|_{\Sp^c(\W)} =\Theta(x_0)\cdot \t c^*\Theta\cdot \lambda_{X_0}^*\chc,
\end{equation}
where $\chc$ is defined in  \eqref{TFofminimalnilpotentorbit1}. 
Equation \eqref{(12.6)} shows that
\[
\WF(\t c^*u) = \WF(\lambda_{X_0}^*\chc).
\]
Hence, \eqref{TFofminimalnilpotentorbit4} implies that
\begin{equation}\label{(12.60)}
\WF(\t c^*u) = \{(X,\tau_{\sp^*(\W)}(w));\ X\in\g^c,\ w\in \W,\ (X_0+X)w=0\}.
\end{equation}
A straightforward computation shows that
\[
v_{X}(Y)=\frac{1}{2}(1+X)Y(1-X) \qquad (Y\in\g),
\]
where $v_X$ was defined in \eqref{vX} with $\kappa=\t c$. Hence
\[
v_{X}^*\tau_{\sp^*(\W)}(w)(Y)=\langle \frac{1}{2}(1+X)Y(1-X)w,w\rangle=\frac{1}{2}\langle Y(1-X)w,(1-X)w\rangle
\]
Therefore
\begin{eqnarray}\label{(12.61)}
\WF(u) & = & \{(x,v_{X}^*\tau_{\sp^*(\W)}(w));\ X=\t c(x),\ w\in \W,\ (X_0+X)w=0\}\\
&=&\{(x,\tau_{\sp^*(\W)}((1-X)w));\ X=\t c(x),\ w\in \W,\ (X_0+X)w=0\}.\nn
\end{eqnarray}
However,
\[
X_0+X=2(x_0-1)^{-1}(x_0x-1)(x-1)^{-1}\ \ \ \text{and}\ \ \ (x-1)^{-1}=\frac{1}{2}(X-1).
\]
Hence, the equation $(X_0+X)w=0$ is equivalent to $(x_0x-1)(X-1)w=0$. Thus
\[
\WF(u)=\{(x,\tau_{\sp^*(\W)}(w'));\ w'\in \W,\ x_0x w' =w' \}
\]
and consequently
\[
\WF(\Theta)=\{(x_0x,\tau_{\sp^*(\W)}(w'));\ w'\in \W,\ x_0x w'=w',\ x_0 \in\Sp^c(\W),\ x  \in\Sp^c(\W) \}\,.
\]
Since, by Lemma \ref{GcGc} below, $\Sp^c(\W)\cdot\Sp^c(\W)=\Sp(\W)$, we are done.
\end{prf}
\begin{lem}\label{GcGc}
$\Sp(\W)=\Sp^c(\W)\cdot\Sp^c(\W)$.
\end{lem}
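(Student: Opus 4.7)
The plan is to reduce the statement to finding an $\F$-rational point in a certain Zariski open subset of $\Sp(\W)$. Given $g \in \Sp(\W)$, I seek $h \in \Sp^c(\W)$ such that $gh^{-1} \in \Sp^c(\W)$ as well; then $g = (gh^{-1}) \cdot h$ is the required factorization. Since $\det(h) = 1$, the condition $\det(gh^{-1} - 1) \ne 0$ is equivalent to $\det(g - h) \ne 0$, so the task is to find an $\F$-point in
\[
U_g = \{h \in \Sp(\W) : \det(h - 1) \ne 0 \text{ and } \det(g - h) \ne 0\}.
\]

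The set $U_g$ is visibly Zariski open in $\Sp(\W)$, being the non-vanishing locus of two regular functions. Each of its two defining conditions individually cuts out a non-empty Zariski open subset: the first is $\Sp^c(\W)$ itself, which contains $-1$ (since $\det(-2 \cdot I) \ne 0$ in characteristic zero), and the second is its $g$-translate $g \cdot \Sp^c(\W)$. Because $\Sp(\W)$ is a geometrically irreducible algebraic group, any two non-empty Zariski open subsets must intersect. Hence $U_g$ is a non-empty Zariski open subvariety of $\Sp(\W)$.

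To pass from $\bar\F$-points to $\F$-points, I would invoke \cite[(2.5.3) Proposition (i)]{Margulis_book_Discrete_subgroups}, the same p-adic density tool already used in the article (for instance in the discussion preceding \eqref{Weyl - Harish-Chandra integration formula on g'}): the complement $\Sp(\W) \setminus U_g$ is a proper Zariski closed subset of the connected algebraic group $\Sp(\W)$ over $\F$, hence $U_g(\F)$ is dense in $\Sp(\W)(\F)$ in the p-adic topology, and in particular non-empty. Any $h \in U_g(\F)$ then yields the factorization $g = (gh^{-1}) \cdot h$ with both factors in $\Sp^c(\W)$. I do not foresee a serious obstacle; the only mildly substantive step is the passage from geometric to rational points, which is standard.
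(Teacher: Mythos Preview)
Your argument is correct and, in fact, cleaner than the paper's. You observe that for any $g\in\Sp(\W)$ the two Zariski open sets $\Sp^c(\W)$ and $g\cdot\Sp^c(\W)$ are both non-empty (containing $-1$ and $-g$ respectively), hence meet because $\Sp(\W)$ is irreducible; Zariski density of $\Sp(\W)(\F)$ then supplies an $\F$-rational point in the intersection, which is exactly what is needed. The only point worth making explicit is why the Margulis citation applies: the paper invokes it for the Lie algebra (an affine space) and for the Zariski identity component $\G^0$; your use for $\Sp(\W)$ is of the same nature, and the hypothesis that $\Sp(\W)(\F)$ is Zariski dense holds because $\Sp$ is split (or, in the spirit of the paper, because the Cayley transform is a birational isomorphism with affine space).

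The paper takes a different, constructive route. Its first paragraph does contain the germ of your argument (the intersection of $\Sp^c(\W)$ with $g^{-1}\Sp^c(\W)$), but only records the conclusion for $g\in\Sp^c(\W)$, to be used as an auxiliary step. The bulk of the proof then analyses the $1$-eigenspace $\W_1$ of $g$: when $\W_1$ is non-degenerate, an explicit $k$ is built blockwise; when $\W_1$ is maximal isotropic, one checks directly that $g=(-1)(-g)$ works; the general case is reduced to these by decomposing $\W$ according to the radical of the form restricted to $\W_1$. Your approach trades this case analysis for a single soft irreducibility-plus-density argument; the paper's approach gives an explicit factorisation and avoids invoking the density statement at this spot.
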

\begin{prf}
The subset $\Sp^c(\W)\subseteq \Sp(\W)$ is Zariski open and so is $g^{-1}\Sp^c(\W)$ for an $g\in \Sp(\W)$. 
Hence, for each $g\in \Sp^c(\W)$ there is $h\in\Sp^c(\W)$ such that $gh\in \Sp^c(\W)$.

Let $g\in\Sp(\W)$ be such that $\W_1\subseteq \W$, the $1$-eigenspace for $g$, is non-degenerate, i.e. 
\[
\W=\W_1\oplus\W_1^\perp.
\]
Then $g|_{\W_1^\perp}\in \Sp^c(\W_1^\perp)$. Let $k\in\Sp(\W)$ be defined by
\[
k(w_1+w_2)=-w_1+hw_2 \qquad (w_1\in\W_1,\ w_2\in \W_1^\perp)\,,
\]
where $h\in \Sp(\W_1^\perp)$ is such that $g|_{\W_1^\perp}h\in \Sp^c(\W_1^\perp)$. Then $k^{-1}, kg\in 
\Sp(W)^c$ and $g=k^{-1} (kg)$. 

Let $g\in \Sp(W)$ be such that $\X\subseteq \W$, the $1$-eigenspace for $g$, is maximal isotropic. 
Then 
\[
\Im(g-1)=\Ker(g-1)^\perp=\X^\perp=X\,.
\]
Hence $g=1$ on $\X$ and on $\W/\X$. Therefore $-g=-1$ on $\X$ and on $\W/\X$. Thus $-g\in \Sp(W)^c$. Since $g=(-1)(-g)$ the claim follows.

In general, there is a decomposition 
\[
\W = (\X \oplus \Y) \oplus \W_1 \oplus \W_2
\]
where $\X$, $\Y$ are isotropic subspaces, the restriction of the symplectic form to $\W_1$ and $\W_2$ are non-degenerate, and the $1$-eigenspace for $g$ is 
\[
\X\oplus \W_1\,.
\]
Here $\X$ is the radical of the symplectic form restricted to the $1$-eigenspace of $g$. Now $g$ preserves $(\X \oplus \W_1)^\perp = \X \oplus \W_2$. Since $g = 1$ on $\X$, $g$ preserves $\W_2$.
Likewise $g = 1$ on $\W_1$ so $g$ preserves $\W_1 \oplus \W_2$ and $(\W_1 \oplus \W_2)^\perp = \X \oplus\Y$.
We could write $g = g_0 g_1 g_2$ where $g_0$, $g_1$, $g_2$ are the restrictions of $g$ to $\X \oplus \Y$, $\W_1$ and $\W_2$ respectively.
Note that $g_2 \in \Sp(\W_3)^c$. We have treated $g_0$ and $g_1$ above.
This proves the lemma.
\end{prf}
\begin{thm}\label{theChc}
For any $\Psi\in C_c^\infty(\wt{\Sp}(\W))$ the distribution
\begin{equation}\label{theChc1}
T(\Psi)=\int_{\wt{\Sp}(\W)}T(g)\Psi(g)\,dg\in \Ss^*(\W)
\end{equation}
is a function  on $\W$ that belongs to $\Ss(\W)$ (times the measure $dw$) and the formula
\begin{equation}\label{theChc2}
\Chc(\Psi)=\int_\W T(\Psi)(w)\,dw
\end{equation}
defines a distribution of $\wt{\Sp}(\W)$. Thus in the space of distributions on $\wt{\Sp}(\W)$,
\begin{equation}\label{theChc3}
\Chc=\int_\W T(w)\,dw\,.
\end{equation}
Explicitly,
\begin{equation}\label{theChc4}
\Chc=\Theta(\wt{-1})^{-1}\Lambda^*_{\wt{-1}}\Theta\,.
\end{equation}
Furthermore,
\begin{equation}\label{theChc5}
\WF(\Chc) = \{(g,\tau_{\sp^*(\W)}(w));\ g\in \wt{\Sp}(\W),\ w\in \W\setminus\{0\},\ gw=-w\}\,.
\end{equation}
\end{thm}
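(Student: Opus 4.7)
The strategy is to first establish the pointwise identity \eqref{theChc4} on the Zariski-open dense Cayley domain $\wt{\Sp^c}(\W)\subseteq \wt{\Sp}(\W)$, then use local integrability of $\Theta$ to extend it to a definition of $\Chc$ on the whole group, and finally read off the wave front set from Proposition \ref{WFTheta}.

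On $\wt{\Sp^c}(\W)$, $(g-1)\W = \W$ and $T(\t g) = \Theta(\t g)\chi_{c(g)}\mu_\W$ is absolutely continuous. I would compute directly
\[
\int_\W T(\t g)(w)\,dw = \Theta(\t g)\int_\W \chi\bigl(\tfrac{1}{4}\langle c(g)w,w\rangle\bigr)\,dw.
\]
Since $c(-1)=0$, formula \eqref{CocycleAschc} identifies the remaining integral with the cocycle value $C(-1,g)$, and \eqref{CocycleAndCharacter} rewrites it as $\Theta(\wt{-1}\t g)/(\Theta(\wt{-1})\Theta(\t g))$. Cancelling the $\Theta(\t g)$ factor gives
\[
\int_\W T(\t g)(w)\,dw = \Theta(\wt{-1})^{-1}\Theta(\wt{-1}\t g)\qquad (\t g\in\wt{\Sp^c}(\W)).
\]

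Because $\Theta$ is a locally integrable function on $\wt{\Sp}(\W)$ by \cite[Theorem 5.26]{AubertPrzebinda_omega} and $\Theta(\wt{-1})\neq 0$, the right-hand side $\Theta(\wt{-1})^{-1}\Lambda^*_{\wt{-1}}\Theta$ is a well-defined distribution on all of $\wt{\Sp}(\W)$; this is what I take as the definition of $\Chc$, immediately yielding \eqref{theChc4}. For \eqref{theChc1}--\eqref{theChc3} I still need $T(\Psi)\in\Ss(\W)$ for $\Psi\in C_c^\infty(\wt{\Sp}(\W))$ together with a Fubini identification of $\int_\W T(\Psi)(w)\,dw$ with $\Chc(\Psi)$. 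The Schwartz property would follow from the twisted-convolution identity $T(\t g_1)\natural T(\t g_2)=T(\t g_1\t g_2)$ of \cite[Lemma 5.23]{AubertPrzebinda_omega}: writing $\Psi$ as a twisted convolution of two smooth compactly supported factors and using the explicit Gaussian description of $T$ on the Cayley domain controls smoothness and decay. The Fubini step then reduces the integration to $\wt{\Sp^c}(\W)$, whose complement has Haar measure zero, and invokes the pointwise identity above.

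For the wave front set \eqref{theChc5}, Proposition \ref{WFTheta} gives
\[
\WF(\Theta) = \bigl\{(g',\tau_{\sp^*(\W)}(w)):g'w=w,\ w\in\W\setminus\{0\}\bigr\}.
\]
Under the identification $T^*\wt{\Sp}(\W)\cong\wt{\Sp}(\W)\times\sp(\W)^*$ built from the left-invariant vector fields $R_Y$ of \eqref{001}--\eqref{011}, the operators $R_Y$ commute with left translations, so the differential of $\Lambda_{\wt{-1}}$ is the identity on cotangent fibers. Hence
\[
\WF(\Lambda^*_{\wt{-1}}\Theta) = \bigl\{(g,\tau_{\sp^*(\W)}(w)):(\wt{-1}g)(w)=w,\ w\neq 0\bigr\} = \bigl\{(g,\tau_{\sp^*(\W)}(w)):gw=-w,\ w\neq 0\bigr\},
\]
and multiplying by the nonzero scalar $\Theta(\wt{-1})^{-1}$ preserves the wave front set, yielding \eqref{theChc5}. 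The hardest part will be the Schwartz property of $T(\Psi)$ and the attendant Fubini justification, since for $g$ outside the Cayley domain $T(\t g)$ is a singular measure supported on the proper subspace $(g-1)\W$; controlling the averaged object uniformly across the stratification of $\wt{\Sp}(\W)$ by $\dim(g-1)\W$ requires a careful harmonic-analytic estimate best handled via the twisted-convolution formalism.
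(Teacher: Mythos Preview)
Your treatment of \eqref{theChc4} and \eqref{theChc5} is essentially the paper's: the paper likewise deduces \eqref{theChc4} from the twisted-convolution identity (computing $T(\wt{-1}\t g)(0)=T(\wt{-1})\natural T(\t g)(0)$ with $T(\wt{-1})$ the constant $\Theta(\wt{-1})$, rather than quoting \eqref{CocycleAschc} directly---your route is fine but note \eqref{CocycleAschc} also requires $-g\in\Sp^c(\W)$, i.e.\ $\det(g+1)\ne 0$, a further measure-zero exclusion), and then reads off \eqref{theChc5} from Proposition~\ref{WFTheta} exactly as you do.

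The real gap is in your argument for $T(\Psi)\in\Ss(\W)$. The sentence ``writing $\Psi$ as a twisted convolution of two smooth compactly supported factors'' is not well-formed: twisted convolution $\natural$ is an operation on $\Ss(\W)$, not on $C_c^\infty(\wt{\Sp}(\W))$. More seriously, your fallback---restrict to $\wt{\Sp^c}(\W)$ since the complement is null---does not by itself give Schwartz decay. If $\operatorname{supp}\Psi$ meets the boundary of $\Sp^c(\W)$, then after Cayley transform the integrand $\psi(x)=\Psi(c^{-1}(x))\Theta(c^{-1}(x))\cdot|\text{Jac}|$ is \emph{not} compactly supported on $\sp(\W)$ (points with $\det(g-1)$ small are sent to infinity, and $|\Theta(\t g)|=|\det g|^{1/2}/|\det(g-1)|$ blows up there), so you cannot directly identify $T(\Psi)(w)$ with the pullback of a Fourier transform of a function in $\Ss(\sp(\W))$.

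The paper resolves this with a concrete decomposition that you gesture at but do not name: Lemma~\ref{GcGc} gives $\Sp(\W)=\Sp^c(\W)\cdot\Sp^c(\W)$, hence a finite cover $\wt{\Sp}(\W)=\bigcup_i g_i\wt{\Sp^c}(\W)$ with $g_i\in\wt{\Sp^c}(\W)$ and a subordinate partition of unity $\sum_i\Psi_i(g_i^{-1}g)=1$ with each $\Psi_i\in C_c^\infty(\wt{\Sp^c}(\W))$. Then
\[
T(\Psi)=\sum_i T(g_i)\natural\int_{\wt{\Sp^c}(\W)}\Psi_i(g)\Psi(g_ig)T(g)\,dg,
\]
and now each inner integral has integrand compactly supported \emph{inside} $\wt{\Sp^c}(\W)$, so after Cayley transform it is the pullback via $\tau_{\sp^*(\W)}$ of the Fourier transform of some $\psi_i\in C_c^\infty(\sp(\W))$, hence in $\Ss(\W)$; twisted convolution by $T(g_i)$ preserves $\Ss(\W)$. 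This is the missing mechanism: the product decomposition of Lemma~\ref{GcGc} is what lets you trade a $\Psi$ whose support crosses the Cayley boundary for finitely many pieces each safely inside a translate of the Cayley domain.
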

\begin{prf}
Lemma \ref{GcGc} implies that there are $g_1, g_2, \ldots, g_m$ in $\wt{\Sp^c}(\W)$ such that
\[
\wt{\Sp}(\W)=\bigcup_{i=1}^m g_i\wt{\Sp^c}(\W)\,.
\]
Therefore, there are smooth functions $\Psi_i \in C^\infty(\wt{\Sp}(\W))$ with support $\Psi_i$ contained in $\wt{\Sp^c}(\W)$ such that
\[
\sum_{i=1}^m\Psi_i(g_i^{-1}g)=1 \qquad (g\in \wt{\Sp}(\W))\,.
\]
Hence, for $\Psi\in C_c^\infty(\wt{\Sp}(\W))$,
\begin{align}
T(\Psi) & =\sum_{i=1}^m\int_{\wt{\Sp}(\W)}\Psi_i(g_i^{-1}g)\Psi(g)T(g)\,dg=\sum_{i=1}^m\int_{\wt{\Sp}(\W)}\Psi_i(g)\Psi(g_ig)T(g_ig)\,dg \nn \\
& =\sum_{i=1}^mT(g_i)\natural\int_{\wt{\Sp^c}(\W)}\Psi_i(g)\Psi(g_ig)T(g)\,dg\,. \label{eqTPsi}
\end{align}
For $g\in \wt{\Sp^c}(\W)$, $T(g)(w) = \Theta(g) \chi(\frac{1}{4} \langle \t c(g)w, w \rangle)$ is a function  on $\W$ by \eqref{eqdefnT}.
For an appropriate $\psi_i\in C_c^\infty(\sp(\W))$,
\[
\int_{\wt{\Sp^c}(\W)}\Psi_i(g)\Psi(g_ig)T(g)(w)\,dg=\int_{\sp(\W)}\psi_i(x)\chi(\langle x(w),w\rangle)\,dx\,.
\]
This is the pullback of the Fourier transform of $\psi_i$ from $\sp^*(\W)$ to $\W$ via the moment map $\tau_{\sp^*(\W)}$ whose fibers have at most two elements. Hence this pullback is a function in $\Ss(\W)$.
The twisted convolution by $T(g_i)$ is a continuous linear map on $\Ss(\W)$ so \eqref{eqTPsi} is a function in $\Ss(\W)$. Hence \eqref{theChc1},  \eqref{theChc2} and \eqref{theChc3} follow. 

Since the Cayley transform maps $-1$ to $0$, $c(-1)=0$, we see that $T(\wt{-1})$ is a constant function:
\[
T(\wt{-1})(w)=\Theta(\wt{-1})\chi(\frac{1}{4}\langle c(-1)(w),w\rangle)=\Theta(\wt{-1})\,.
\]
If we view distributions as generalized functions, then we get
\begin{multline*}
\Lambda_{\wt{-1}}\Theta(g)=\Theta(\wt{-1}g)=T(\wt{-1}g)(0)
=T(\wt{-1})\natural T(g)(0)\\
=\int_{\W}T(\wt{-1})(0-w)T(g)(w)\,dw
=\int_{\W}\Theta(\wt{-1})T(g)(w)\,dw=\Theta(\wt{-1})\Chc(g)\,.
\end{multline*}
This verifies \eqref{theChc4}. Finally, \eqref{theChc5} follows from \eqref{theChc4} and Proposition \ref{WFTheta}.
\end{prf}
\subsection{\bf Induction from a mirabolic subgroup of $\GL(\X)$.}\label{Induction from a mirabolic subgroup}
\
Let $\Pg$ be a parabolic subgroup of a reductive group $\G$, with Langlands decomposition $\Pg=\L\N$. Let $\n$ be the Lie algebra of the unipotent radical $\N$ and let $\K\subseteq \G$ be an open compact subgroup of $\G$ such that the Iwasawa decomposition $\G = \K \L \N$ holds, see \cite[Theorem 5, page 16]{Harish-Chandra-van-Dijk}. 
For any function $\Psi\in C_c^\infty(\G)$ define
\begin{eqnarray} \label{eqq11.1}
&&\Psi^\K(g)=\int_\K\Psi(kgk^{-1})\,dk,\ \ \Psi^\K_\N(g)=\int_\N\Psi^\K(gn)\,dn\, \nonumber \\
&&\Psi^\L(l)=|\det(\Ad(l)_\n)|^{1/2}\Psi^\K_\N(l)
\qquad (g\in \G,\ l\in \L)\,. 
\end{eqnarray}
Clearly
$$ 
C_c^\infty(\G)\ni \Psi\to \Psi^\L|_{\L}\in C_c^\infty(\L) \label{eqq11.2}
$$
is a well defined continuous linear map. For a distribution
$u\in C_c^\infty(\L)^*$ we define the induced  distribution
$\Ind_\L^\G(u)\in C_c^\infty(\G)^*$ by
\begin{equation}\label{induceddistribution}
\Ind_\L^\G(u)(\Psi)=u(\Psi^\L)
\qquad (\Psi\in C_c^\infty(\G))\,.
\end{equation}
This construction may be found in \cite[page 18 and 30]{Harish-Chandra-van-Dijk}.
In the case of a real reductive group $\G$, it is explained in detail in \cite[part II, pages 121-123]{Varada}. See also \cite[Proposition A.5]{BerPrzeNo}.

Recall the complete polarization $\W=\X\oplus \Y$ in section \ref{The minimal orbital}. We shall identify the subgroup of the symplectic group preserving both $\X$ and $\Y$ with $\GL(\X)$. Denote by $\K\subseteq \GL(\X)$ the subgroup that preserves $\mathcal L\cap\X$. This is a maximal compact subgroup.  

Let $\X'\subseteq \X$ be a one dimensional subspace and let $\X''\subseteq \X$ be a complementary subspace so that $\X=\X'\oplus \X''$. Denote by $\Pg\subseteq \GL(\X)$ the subgroup of elements preserving $\X'$. This is known as a mirabolic subgroup, \cite{Bernstein1984}. The unipotent radical of $\Pg$ coincides with $\N=1+\Hom(\X'',\X')\subseteq\End(\X)$ and the Lie algebra $\n=\Hom(\X'',\X')$. The Levi factor $\L\subseteq \Pg$ consists of elements that preserve both $\X'$ and $\X''$. It is isomorphic to $\GL(\X')\times\GL(\X'')$. 
By choosing a non-zero vector in $\X'$ we identify $\X'$ with $\F$ and $\GL(\X')$ wit $\F^\times$.  Let $\wt{\GL}(\X)$ denote the inverse image of $\GL(\X)$ in $\wt{\Sp}(\W)$.
Define
\[
sum:C_c^\infty(\wt{\GL}(\X))\to C_c^\infty(\GL(\X))
\]
to be the sum over the fibers of the covering map
\[
sum(f)(g)=\sum_{\t g} f(\t g) \qquad (g\in \GL(\X))\,.
\] 
\begin{thm}\label{theChcGL}
For any $\Psi\in C_c^\infty(\wt{\GL^c}(\X))$, the distribution
\begin{equation}\label{theChc0GL}
T(\Psi)=\int_{\wt{\GL}(\X)} \Psi(\t g)T(\t g)\,d\t g\in \Ss^*(\W)
\end{equation}
is a function on $\W$, invariant under the \eqref{Faction} action of $\F^\times$, such that 
\[
\int_{{\F^\times}\backslash \W_{\F^\times}}\left|T(\Psi)(w)\right|\,d(\F^\times w)<\infty \,.
\]
The formula 
\begin{equation}\label{theChc2GL}
\Chc(\Psi)=\int_{{\F^\times}\backslash \W_{\F^\times}} T(\Psi)(w)\,d(\F^\times w)
\qquad (\Psi\in C_c^\infty(\wt{\GL^c}(\X)))
\end{equation}
defines a distribution of $\wt{\GL^c}(\X)$ which coincides with a complex valued measure
\begin{equation}\label{theChc3GL}
\Chc(\Psi)=\int_{\GL(X)} sum(\Psi\Theta)(g) |\det(g-1)| \delta(\det(g+1))\,dg\,.
\end{equation}
This measure extends by zero to $\wt{\GL}(\X)$ and defines a distribution, which shall be denoted by the same symbol, \eqref{theChc2GL}.
The function
\begin{equation}\label{theChc5GL}
\epsilon:\wt{\GL}(\X)\ni \t g\to \frac{\Theta(\t g)}{|\Theta(\t g)|}\in \C^\times
\end{equation}
is a group homomorphism and 
\begin{equation}\label{theChc6GL}
\Chc= \Ind_{\GL(\X')\times\GL(\X'')}^{\GL(\X)}\left( (\delta_1\otimes \mu) \circ \Lambda_{-1}\circ sum\circ mult_\epsilon \right) \, ,
\end{equation}
where $\delta_1$ is the Dirac delta at $1\in\GL(\X')$ and $\mu$ is the Haar measure on  $\GL(\X'')$ and $mult_\epsilon$ is the multiplication by $\epsilon$.
Furthermore, 
\begin{equation}\label{theChc7GL}
\WF(\Chc) = \{(\t g,\tau_{{\mathrm{End}}(\X)^*}(w));\ \t g\in \wt{\GL}(\X)\,,\ \tau_{{\mathrm{End}}(\X)^*}(w)\ne 0\,,\ g(w)=-w\}\,,
\end{equation}
\end{thm}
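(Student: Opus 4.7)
The plan is to relate $\Chc$ on $\wt{\GL^c}(\X)$ to the distribution $\chc$ on $\End(\X)$ from Theorem \ref{TFofminimalorbit} via the Cayley transform, and then extract the remaining assertions. For $\t g\in\wt{\GL^c}(\X)\subseteq \wt{\Sp^c}(\W)$ one has $(g-1)\W=\W$, so by \eqref{eqdefnT} the distribution $T(\t g)$ is the function $w\mapsto\Theta(\t g)\chi(\tfrac14\langle c(g)w,w\rangle)$ times $\mu_\W$, and $T(\Psi)$ is correspondingly a function of $w$. The $\F^\times$-action \eqref{Faction} commutes with $\GL(\X)\subseteq\Sp(\W)$ and preserves the symplectic form, hence preserves $c(g)$, so $T(\Psi)$ is $\F^\times$-invariant. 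A change of variables $x=\alpha\,c(g)|_\X$ (for an appropriate constant $\alpha$) would convert $T(\Psi)(w)$ into $\int_{\End(\X)}\psi(x)\chi(\langle xw,w\rangle)\,dx$ for some $\psi\in C_c^\infty(\End(\X))$ built from $\Psi\Theta$ and the Jacobian of $c$. Theorem \ref{TFofminimalorbit}(ii) and Fubini would then supply the absolute integrability over $\F^\times\backslash\W_{\F^\times}$ required by the first part of the theorem, together with $\Chc(\Psi)=\chc(\psi)=c_0\int\psi(x)\,\delta_0(\det(x))\,dx$, where $c_0$ is the constant from Remark \ref{TFofminimalorbitremark}.

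To derive \eqref{theChc3GL} I would pull $\delta_0\circ\det$ back along Cayley. Since $\det c(g)=\det(g+1)/\det(g-1)$ and $\delta_0(f/h)=|h|\delta_0(f)$ for $h\ne0$, once the Jacobian of $c$ is absorbed into the normalization the measure $\delta_0(\det(x))\,dx$ transports to $|\det(g-1)|\,\delta(\det(g+1))\,dg$ on $\GL^c(\X)$; summing over the two-element fibers of $\wt{\GL}(\X)\to\GL(\X)$ produces $sum(\Psi\Theta)$ in \eqref{theChc3GL}. Because $\{\det(g-1)=0=\det(g+1)\}$ has codimension two in $\GL(\X)$, the resulting measure is locally finite on $\GL(\X)$, giving the extension by zero to $\wt{\GL}(\X)$.

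For the character property of $\epsilon=\Theta/|\Theta|$, I would invoke the explicit Weil formula for $\Theta|_{\wt{\GL^c}(\X)}$ from \cite{AubertPrzebinda_omega}, which gives $|\Theta(\t g)|=|\det(g-1)|^{-1/2}$ and leaves the phase as a product of Weil factors multiplicative on the cover. Formula \eqref{theChc6GL} is then a repackaging of \eqref{theChc3GL} along the mirabolic $\Pg=\L\N$. On $\L=\GL(\X')\times\GL(\X'')$ the hypersurface $\{\det(g+1)=0\}$ splits as $\{-1\}\times\GL(\X'')$ (codimension one, $\L$-stable) and $\GL(\X')\times\{\det(B+1)=0\}$ (of higher codimension after $\L$-conjugation, hence negligible for Harish-Chandra induction). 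A Weyl-type slice computation should show that the Jacobian of the transverse slice through $\{-1\}\times\GL(\X'')$ in $\GL(\X)$, combined with the modular factor in \eqref{eqq11.1}, exactly cancels $|\det(g-1)|$; what remains is the measure $\delta_{-1}\otimes\mu$ on $\L$, multiplied by $\epsilon$ after absorbing the modulus of $\Theta$ and rewritten as $(\delta_1\otimes\mu)\circ\Lambda_{-1}\circ mult_\epsilon$.

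For the wave front set \eqref{theChc7GL} I would follow the pattern of Proposition \ref{WFTheta}. On $\wt{\GL^c}(\X)$, $\Chc$ is, after Cayley, a translate of $\chc$ on $\End(\X)$; Theorem \ref{TFofminimalorbitWF} together with the chain-rule identification \eqref{022} converts the cotangent condition $x(w)=0$ into $g(w)=-w$. The extension to the complement of $\wt{\GL^c}(\X)$ would use an analogue of Lemma \ref{GcGc} for $\GL(\X)$, factoring an arbitrary $g$ as a product of two elements of $\GL^c(\X)$ and transferring the estimate by left translation. The hard part is the third paragraph: precisely matching the singular measure $|\det(g-1)|\delta(\det(g+1))\,dg$ with Harish-Chandra's induction normalization from the mirabolic requires careful analysis of the orbit structure on $\{\det(g+1)=0\}$ and of how $\L$-conjugation together with the Iwasawa decomposition interact with the modular factor $|\det\Ad(l)_\n|^{1/2}$.
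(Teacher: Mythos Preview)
Your overall strategy matches the paper's: push everything through the Cayley transform to reduce $\Chc$ to the $\chc=\mu_{n-1}$ of Theorem~\ref{TFofminimalorbit}, then read off \eqref{theChc3GL}, the character property of $\epsilon$ from \cite{AubertPrzebinda_omega}, and the wave front set from Theorem~\ref{TFofminimalorbitWF}. Two points, however, diverge from what the paper actually does.

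For \eqref{theChc6GL} the paper does not attempt the abstract ``Weyl-type slice'' argument you sketch. Instead it first uses $|\Theta(\t g)\det(g-1)|=|\det g|^{1/2}$ to rewrite \eqref{theChc3GL} as $\int\Phi(x-1)|\det(x-1)|^{1/2-n}\,d\mu_{n-1}(x)$ with $\Phi=sum(\Psi\epsilon)$, and then evaluates $\mu_{n-1}$ via the explicit $\K$-averaged formula \eqref{gl1gln50}. This turns the whole thing into a concrete block-matrix integral over $M_{1,n-1}\times M_{n-1,n-1}$, from which the $\N$-integration and the modular factor $|\det\Ad(l)_\n|^{1/2}$ emerge by inspection. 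Your slice approach could be made to work, but the part you flag as ``hard'' is exactly where the paper bypasses all orbit-geometry subtleties by computing in coordinates; that is the missing idea in your third paragraph.

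For \eqref{theChc7GL} you propose a $\GL(\X)$-analogue of Lemma~\ref{GcGc} to reach points outside $\GL^c(\X)$. This is unnecessary: $\GL(\X)$ sits as an open subset of $\End(\X)$, and once \eqref{theChc3GL} is rewritten (via $x=g+1$) as a smooth multiple of $\mu_{n-1}$ translated by $-1$, Theorem~\ref{TFofminimalorbitWF} applies on all of $\GL(\X)$ at once. No factorization or left-translation transfer is needed.
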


\begin{prf}
If $g\in \GL^c(\X)$ and $x=c(g)\in\End(\X)$, then $\det(x-1)=\det(2(g-1)^{-1})\ne 0$ and 
$\det(x+1)=\det(2g(g-1)^{-1})\ne 0$. Let $\Psi\in C_c^\infty(\wt{\GL^c}(\X))$. 
Then
\begin{equation}\label{theChc8GL}
T(\Psi)(w)=\int_{\wt{\GL}(\X)} \Psi(\t g)\Theta(\t g)\chi_{c(g)}(w)\,d\t g
=\int_{{\mathrm{End}}(\X)}\psi(x)\chi_{x}(w)\,dx\,,
\end{equation}
where
$
\psi(x)=sum(\Psi\Theta)(c^{-1}(x)) \cdot \Jac(x)
$
and $\Jac$ is the Jacobian of the inverse Cayley transform. Since $\psi\in C_c^\infty(\End^c(\X))$,  \eqref{theChc8GL} is a Fourier transform of $\psi$ evaluated at $\tau_{{\mathrm{End}}(\X)^*}(w)$. 
Hence $T(\Psi)$ is a function on $\W$ invariant under the action of $\F^\times$.
In terms of \eqref{TFofminimalorbit5}, \eqref{theChc2GL} is equal to
\[
\Chc(\Psi) =  \int_{{\mathrm{End}}(\X)}\psi(x)\delta(\det(x))\,dx
=\int_{\GL(\X)} sum(\Psi\Theta)(g)\delta(\det c(g))\,dg\,.
\]
Applying the formula $\delta(at)=|a|^{-1}\delta(t)$ to the above integral, we get \eqref{theChc3GL}. Through extension by zero, \eqref{theChc3GL} defines a distribution on $\wt\GL(X)$.

The claim \eqref{theChc5GL} follows from \cite[Proposition 5.27]{AubertPrzebinda_omega}.
From \cite[Proposition 5.27]{AubertPrzebinda_omega}, we have 
\[
\left|\Theta(\t g) \det(g-1)\right|=|\det g|^{\frac{1}{2}}\,.
\]
Hence, \eqref{theChc3GL} is equal to
\[
\int_{\GL(\X)}\Phi(g)|\det g|^{\frac{1}{2}}\delta(\det(g+1))\,dg\,,
\]
where $\Phi=sum(\Psi\epsilon)$.
Let $n=\dim_\F\X$. Then the last integral may be rewritten as
\begin{eqnarray}
\Chc(\Psi) & = & \int_{{\mathrm{End}}(\X)}\Phi(x)|\det x|^{\frac{1}{2}}\delta(\det(x+1))|\det(x)|^{-n}\,dx \nn \\
&=&\int_{{\mathrm{End}}(\X)}\Phi(x-1)|\det (x-1)|^{\frac{1}{2}-n}\delta(\det(x))\,dx\,.\label{theChc9GL} 
\end{eqnarray}
With reference to \eqref{theChc8GL}, we get
\[
\Chc(\Psi) =\int_{\W} \int_{{\mathrm{End}}(\X)}\psi(x) \chi_{x}(w)\,dx = \chc(\psi) = \int_{{\mathrm{End}}(\X)} \psi(x) \mu_{n-1}(x).
\]
The second equality is due to \eqref{TFofminimalorbit1} and \eqref{TFofminimalorbit2}.
By \eqref{TFofminimalorbit1}, \eqref{TFofminimalorbit2} and Remark \ref{TFofminimalorbitremark},
we have $\chc(x) = \delta(\det(x)) = \mu_{n-1}(x)$.

Notice that, with the appropriate normalization of measures,
\begin{equation}\label{gl1gln50}
\mu_{n-1}(\psi)=\int_{M_{n,n-1}(\F)} \psi^{\K}(0,z)\,dz\,,\ \ \ \psi^{\K}(x)=\int_{\K} \psi(kxk^{-1})\,dk\,.
\end{equation}
Hence, using the coordinates in the proof of Theorem \ref{TFofminimalorbit}, \eqref{theChc9GL} is equal to
\begin{eqnarray}\label{theChc10GL1}
\lefteqn{\Chc(\Psi) = \int_{{\mathrm{End}}(\X)}\Phi(x-1)|\det (x-1)|^{\frac{1}{2}-n}\,d\mu_{n-1}(x)} \\
&=&\int_{M_{n-1,n-1}(\F)}\int_{M_{1,n-1}(\F)} \Phi^\K(
\left(
\begin{array}{rrr}
-1 & z_1\\
0 & z_2-1
\end{array}
\right))
\left|
\det(
\left(
\begin{array}{rrr}
-1 & z_1\\
0 & z_2-1
\end{array}
\right))
\right|^{\frac{1}{2}-n}\,dz_1\,dz_2\nn\\
&=&\int_{M_{n-1,n-1}(\F)}\int_{M_{1,n-1}(\F)} \Phi^\K(
\left(
\begin{array}{rrr}
-1 & z_1\\
0 & z_2
\end{array}
\right))
\left|
\det(z_2)
\right|^{\frac{1}{2}-n}\,dz_1\,dz_2\nn\\
&=&\int_{M_{n-1,n-1}(\F)}\int_{M_{1,n-1}(\F)} \Phi^\K(
\left(
\begin{array}{rrr}
-1 & 0\\
0 & z_2
\end{array}
\right)
\left(
\begin{array}{rrr}
1 & -z_1\\
0 & 1
\end{array}
\right))
\left|
\det(z_2)
\right|^{-\frac{1}{2}}\,dz_1\left|
\det(z_2)
\right|^{1-n}\,dz_2\nn\\
&=&\int_{\GL_{n-1}(\F)}\int_{M_{1,n-1}(\F)} \Phi^\K(
\left(
\begin{array}{rrr}
-1 & 0\\
0 & g_2
\end{array}
\right)
\left(
\begin{array}{rrr}
1 & z_1\\
0 & 1
\end{array}
\right))
\left|\det(g_2)\right|^{-\frac{1}{2}}\,dz_1
\,dg_2\nn\,.
\end{eqnarray}
Since
\[
\left(
\begin{array}{rrr}
-1 & 0\\
0 & g_2
\end{array}
\right)
\left(
\begin{array}{rrr}
0 & z_1\\
0 & 0
\end{array}
\right)
\left(
\begin{array}{rrr}
-1 & 0\\
0 & g_2
\end{array}
\right)^{-1}
=\left(
\begin{array}{rrr}
0 & -z_1g_2^{-1}\\
0 & 0
\end{array}
\right)
\]
we see that
\[
\left|\det(g_2)\right|^{-\frac{1}{2}}
=\left|\Ad(
\left(
\begin{array}{rrr}
-1 & 0\\
0 & g_2
\end{array}
\right))_\n
\right|^{-\frac{1}{2}}\,,
\]
where
\[
\n=\left\{
\left(
\begin{array}{rrr}
0 & z_1\\
0 & 0
\end{array}
\right);\ z_1\in M_{1,n-1}(\F) \right\}\,.
\]
Therefore in the notation of \eqref{eqq11.1},
\begin{align*}
\Chc(\Psi) & = \int_{\GL_{n-1}(\F)} \Phi^{\GL_1(\F)\times\GL_{n-1}(\F)}(
\left(
\begin{array}{rrr}
-1 & 0\\
0 & g_2
\end{array}
\right))\,dg_2\\
& =\int_{\GL_{n-1}(\F)} \Phi^{\GL_1(\F)\times\GL_{n-1}(\F)}(
\left(
\begin{array}{rrr}
-1 & 0\\
0 & -g_2
\end{array}
\right))\,dg_2\\
& =\int_{\GL_{n-1}(\F)} (\Lambda_{-1}\Phi)^{\GL_1(\F)\times\GL_{n-1}(\F)}(
\left(
\begin{array}{rrr}
1 & 0\\
0 & g_2
\end{array}
\right))\,dg_2\\
& =\Ind_{\GL_1(\F)\times\GL_{n-1}(\F)}^{\GL_n(\F)}(\delta_1\times dg_2)(\Lambda_{-1}\Phi) \qquad \text{ (by \eqref{induceddistribution}.)} \\
& =\Ind_{\GL_1(\F)\times\GL_{n-1}(\F)}^{\GL_n(\F)}(\delta_1\times dg_2)(\Lambda_{-1}(sum(\Psi\epsilon))\,.
\end{align*}
This verifies \eqref{theChc6GL}.

Notice that the group $\GL(\X)$ is contained in its Lie algebra $\End(\X)$.
We see from \eqref{theChc9GL} that $\Chc$ coincides with the distribution $\delta\circ \det=\mu_{n-1}$ multiplied by the smooth function $|\det(x)|^{\frac{1}{2}}$ and translated by $-1$. Hence one obtains \eqref{theChc7GL} from Theorem \ref{TFofminimalorbitWF} via the substitution $x=g+1$.
\end{prf}
\subsection{\bf The definition of $\Chc_{h'}$.}\label{The definition of Chc}
\
Let $\G \cdot \G'\subseteq \Sp(\W)$ be an irreducible dual pair. 
Let $\H'\subseteq \G'$ be a Cartan subgroup with the split part $\A'\subseteq \H'$. Recall the dual pair $(\A'',\A')$ defined in Lemma \ref{question1}.

\begin{lem}\label{question2Chc}
For any $\Psi\in C_c^\infty(\wt{\A''{}^c})$, the distribution
\begin{equation}\label{theChc0GLChc}
T(\Psi)=\int_{\wt{\A''{}^c}} \Psi(\t g)T(\t g)\,d\t g\in \Ss^*(\W)
\end{equation}
is a function on $\W$, such that
\begin{equation}\label{theChcGLChc}
\int_{\A'\backslash\W_{\A'}}\left|\int_{\A''}\Psi(g)T(g)(w)\,dx\right| \,d\overset . w<\infty \,.
\end{equation}
The formula
\begin{equation}\label{theChc2GLChc}
\Chc(\Psi)=\int_{\A'\backslash\W_{\A'}} T(\Psi)(w)\,d(\A' w)
\qquad (\Psi\in C_c^\infty(\wt{\A''{}^c}))
\end{equation}
defines a distribution on $\wt{\A''{}^c}$ which coincides with a complex valued measure.
This measure extends by zero to $\wt{\A''}$ and defines a distribution, which shall be denoted by the same symbol, \eqref{theChc2GLChc}.
Furthermore,
\begin{equation}\label{theChc7GLChc}
\WF(\Chc) = \{(\t g,\tau_{\a''{}^*}(w));\ \t g\in \wt{\A''}\,,\ \tau_{\a''{}^*}(w)\ne 0\,, g(w)=-w\}\,,
\end{equation}
\end{lem}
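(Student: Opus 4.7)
The plan is to reduce the lemma to Theorem \ref{theChc} (applied to a symplectic factor) and Theorem \ref{theChcGL} (applied to each general linear factor), by exploiting the product decompositions \eqref{A''a''} and \eqref{A''a''I.1}.

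First I would invoke
\[
(\A'', \A') = (\A''_0, \A'_0) \times \prod_{j\geq 1}(\A''_j, \A'_j),
\]
where $(\A''_0, \A'_0)$ is either absent (type II) or equal to $(\Sp(\W^{ell}), \Og_1(\F))$ (type I), and each $(\A''_j, \A'_j)$ with $j\geq 1$ is isomorphic to $(\GL_{n_j}(\F), \GL_1(\F))$. Correspondingly $\W = \W_0 \oplus \bigoplus_{j\geq 1}\W_j$ decomposes as a direct sum of mutually orthogonal symplectic subspaces, and $\A'\backslash\W_{\A'} = \prod_i (\A'_i\backslash(\W_i)_{\A'_i})$. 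Because the symplectic form is additive on this orthogonal decomposition and the self-dual lattice $\mathcal{L}$ may be chosen to factor, the oscillator distribution \eqref{eqdefnT} decomposes as
\[
T(\t g)(w) = \prod_i T_i(\t g_i)(w_i)
\]
for $\t g=(\t g_0, \t g_1, \ldots) \in \wt{\A''{}^c}$, where $T_i$ is the analogous distribution associated to $\W_i$. The compatibility of the metaplectic cocycles under restriction to this product follows from the explicit formula in \cite[Proposition 5.12]{AubertPrzebinda_omega}.

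Next, by linearity and the density of tensor products of smooth compactly supported functions in $C_c^\infty(\wt{\A''{}^c})$, it suffices to treat $\Psi = \prod_i \Psi_i$ with $\Psi_i\in C_c^\infty(\wt{(\A''_i)^c})$. For such a product, Fubini yields $T(\Psi)(w) = \prod_i T_i(\Psi_i)(w_i)$. Theorem \ref{theChc} applied to the $i=0$ factor (if present) shows $T_0(\Psi_0) \in \Ss(\W_0)$, while Theorem \ref{theChcGL} applied to each $i\geq 1$ factor shows that $T_i(\Psi_i)$ is an $\A'_i$-invariant function on $(\W_i)_{\A'_i}$ that becomes integrable modulo $\A'_i$. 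Hence
\[
\int_{\A'\backslash\W_{\A'}}|T(\Psi)(w)|\,d(\A' w) = \prod_i \int_{\A'_i\backslash(\W_i)_{\A'_i}}|T_i(\Psi_i)(w_i)|\,d(\A'_i w_i) < \infty,
\]
which establishes \eqref{theChcGLChc}. Consequently the integral \eqref{theChc2GLChc} defines a distribution that factors as $\Chc = \bigotimes_i \Chc_i$. Each $\Chc_i$ is a complex-valued measure that extends by zero from $\wt{(\A''_i)^c}$ to $\wt{\A''_i}$ (by Theorems \ref{theChc} and \ref{theChcGL}), so the same is true of $\Chc$ on $\wt{\A''}$.

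For the wave front set \eqref{theChc7GLChc}, I would combine the tensor factorization with the standard product rule for wave front sets (cf.\ Appendix A) and the explicit formulas \eqref{theChc5} and \eqref{theChc7GL}. Under the identification $\a''{}^* = \bigoplus_i \a''_i{}^*$, noting that $g(w)=-w$ iff $g_i(w_i)=-w_i$ for every $i$, and that $\tau_{\a''{}^*}(w) = \sum_i \tau_{\a''_i{}^*}(w_i)$, this yields precisely \eqref{theChc7GLChc}; points of $\WF(\Chc_i)$ in factors with $w_i=0$ contribute zero in $\a''_i{}^*$, so the union over factors collapses to the single description in \eqref{theChc7GLChc}. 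The main obstacle I foresee is the bookkeeping of the metaplectic cocycle: one must verify that $T(\t g) = \prod_i T_i(\t g_i)$ is well-defined on $\wt{\A''{}^c}$ up to the common central $\pm 1$, which is standard but delicate whenever $\wt{\A''}$ does not split as a direct product of the $\wt{\A''_i}$; this can be handled by choosing compatible splittings on each Cayley-transformable piece.
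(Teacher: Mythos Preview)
Your proposal is correct and follows essentially the same approach as the paper: the paper's proof is the single line ``This is immediate from \eqref{A''a''}, \eqref{A''a''I.1} and Theorems \ref{theChc} and \ref{theChcGL},'' which is exactly the product decomposition and factor-by-factor reduction you spell out in detail. Your additional remarks on the tensor-product wave front set and the metaplectic cocycle bookkeeping are reasonable elaborations of what the paper leaves implicit.
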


\begin{prf}
This is immediate from \eqref{A''a''},  \eqref{A''a''I.1} and Theorems \ref{theChc} and \ref{theChcGL}.
\end{prf}

\begin{pro}\label{proposition1.8Chc}
Fix an element $\t h'\in \reg{\wt{\H'}}$. Then the intersection of the wave front set $\WF(\Chc)$ with the conormal bundle to the embedding
\begin{equation}\label{proposition 1.8.1.Chc}
\wt{\G}\ni \t g\to \t g\t h'\in \wt{\A''}
\end{equation}
is empty.
\end{pro}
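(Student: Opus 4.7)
The plan is to imitate the proof of Proposition \ref{proposition1.8}, passing from the Lie algebra setting to the group setting via the metaplectic cover and using the group-level analogues of the geometric lemmas that are already in place.

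First I would observe that the covering map $\wt{\A''} \to \A''$ is a local diffeomorphism, so the question is really a question on $\A''$: the wave front set $\WF(\Chc)$ transported to $\A''$ is identified with the set described in \eqref{theChc7GLChc}, namely
\[
\{(g, \tau_{\a''{}^*}(w)) : g \in \A''\,,\ \tau_{\a''{}^*}(w) \ne 0\,,\ g(w) = -w\}\,,
\]
which is exactly the set $S_{\A''}$ defined in \eqref{geometricII.1} (type II) and \eqref{geometricI.1} (type I). Similarly, the embedding $\wt{\G} \ni \tilde{g} \mapsto \tilde{g}\tilde{h}' \in \wt{\A''}$ descends to the embedding $\iota_{\G, h'} \colon \G \ni g \mapsto g h' \in \A''$ of \eqref{geometricII.2_1}, and its conormal bundle on $\wt{\A''}$ is the pullback of the conormal bundle $N_{\iota_{\G, h'}}$ described in \eqref{geometricII.2'_1}.

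Next, since $\tilde{h}'$ is regular in $\wt{\H'}$, its image $h'$ is regular in $\H'$. The type II case is then handled by Lemma \ref{geometricII}, which asserts $S_{\A''} \cap N_{\iota_{\G, h'}} = \emptyset$; the type I case is handled by Lemma \ref{geometricI}, which gives the same conclusion. Combining these with the identifications above, the intersection of $\WF(\Chc)$ with the conormal bundle of \eqref{proposition 1.8.1.Chc} is empty.

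The only step that requires care is the bookkeeping for the metaplectic cover: one needs to check that $\WF$ and conormal bundles on $\wt{\A''}$ correspond under the covering map to those on $\A''$ in a way compatible with the description in Lemma \ref{question2Chc} and with the factorizations \eqref{A''a''} and \eqref{A''a''I.1}. Since the covering is finite-to-one and the objects in question are local, this is routine, and the geometric obstruction to the intersection being nonempty is captured entirely by Lemmas \ref{geometricII} and \ref{geometricI}. Thus the hardest content has already been done at the Lie algebra level; the group-level proof is essentially a translation.
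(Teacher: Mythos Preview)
Your proposal is correct and follows essentially the same approach as the paper: identify $\WF(\Chc)$ with the set $S_{\A''}$ via \eqref{theChc7GLChc}, then invoke Lemmas~\ref{geometricII} and~\ref{geometricI} to conclude $S_{\A''}\cap N_{\iota_{\G,h'}}=\emptyset$. The paper's proof is simply these two steps stated tersely; your additional remarks on descending through the metaplectic cover are correct but treated as implicit in the paper.
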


\begin{prf}
The wave front set $\WF(\Chc)$ is equal to the set $S_{\A''}$ defined in \eqref{geometricII.1} and \eqref{geometricI.1}. The proposition follows from Lemmas \ref{geometricII} and \ref{geometricI}.
\end{prf}
Standard micro-local analysis, i.e. Theorem \ref{Theorem 8.2.4}, together with Proposition \ref{proposition1.8}
justify the following definition.

\begin{defi}\label{Definition 1.9.Chc}
For $\t h'\in \reg{\wt{\H'}}$ define $\Chc_{\t h'}$ to be the pullback of the distribution $\Chc$ to  $\wt{\G}$ via the embedding \eqref{proposition 1.8.1.Chc}.
Formally
\begin{equation}\label{question2.2agChc}
\Chc_{\t h'}(\t g)=\int_{\A'\backslash \W_{\A'}} T(\t g\t h')(w)\,d(\A' w) \qquad (x\in \g)\,.
\end{equation}
\end{defi}

\begin{lem}\label{Lemma 1.10.Chc}
For any $x'\in \reg{\h'}$ 
\[
\WF(\Chc_{\t h'})\subseteq\{(\t g,\tau_{\g^*}(w));\ gh'(w)=-w,\ \t  g \in\wt{\G},\ w\in \W\}\,.
\]
\end{lem}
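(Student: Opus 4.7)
The plan is to imitate the proof of the Lie-algebra version, Lemma \ref{Lemma 1.10.}, replacing the embedding $\iota_{\g,x'}$ of \eqref{geometricII.2} by the group embedding $\iota_{\G,h'}$ of \eqref{proposition 1.8.1.Chc}.

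First, by Definition \ref{Definition 1.9.Chc}, the distribution $\Chc_{\t h'}$ is the pullback of $\Chc$ along $\iota_{\G,h'}\colon\wt{\G}\ni\t g\to \t g\t h'\in\wt{\A''}$. Lemma \ref{question2Chc} identifies
\[
\WF(\Chc)=\{(\t g_1,\tau_{\a''{}^*}(w));\ \t g_1\in\wt{\A''},\ \tau_{\a''{}^*}(w)\ne 0,\ g_1(w)=-w\},
\]
and Proposition \ref{proposition1.8Chc} asserts that this set is disjoint from the conormal bundle $N_{\iota_{\G,h'}}$ of \eqref{geometricII.2'_1}. Hence Hörmander's pullback theorem (Theorem \ref{Theorem 8.2.4}) applies and yields
\[
\WF(\Chc_{\t h'})\ \subseteq\ \iota_{\G,h'}^{*}\WF(\Chc).
\]

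It remains to describe the image of $\WF(\Chc)$ under the projection map that sends a covector at $\t g\t h'\in\wt{\A''}$ to its pullback at $\t g\in\wt{\G}$. Using the identification of the cotangent bundle of $\wt{\G}$ with $\wt{\G}\times\g^{*}$ via right translation (as in \eqref{002}), the differential of right multiplication by $\t h'$ is the identity on $\g$, so its transpose is just the restriction map $\a''{}^{*}\to \g^{*}$. By the very definition \eqref{moment_map_II} of the moment map, this restriction carries $\tau_{\a''{}^{*}}(w)$ to $\tau_{\g^{*}}(w)$. Therefore every covector in $\iota_{\G,h'}^{*}\WF(\Chc)$ has the form $(\t g,\tau_{\g^{*}}(w))$ with $g h'(w)=-w$, which is exactly the asserted inclusion.

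The main obstacle is purely bookkeeping: being careful about the identification of $T^{*}\wt{\G}$ with $\wt{\G}\times\g^{*}$ (left vs.\ right translation) so that the transpose of $d\iota_{\G,h'}$ really is the restriction from $\a''$ to $\g$. Because $\iota_{\G,h'}$ is right multiplication by the fixed element $\t h'$, this identification is compatible and the calculation reduces to the linear case already used in Lemma \ref{Lemma 1.10.}.
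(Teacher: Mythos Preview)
Your proof is correct and follows the same route as the paper's one-line argument (Definition~\ref{Definition 1.9.Chc}, \eqref{theChc7GLChc}, and Theorem~\ref{Theorem 8.2.4}), just with the details spelled out. One small clarification: your claim that the differential of $\iota_{\G,h'}$ is the inclusion $\g\hookrightarrow\a''$ is right, but the reason is not merely that $\iota_{\G,h'}$ is right multiplication---under the paper's trivialization \eqref{002} via left-invariant vector fields, right multiplication by $\t h'$ has differential $\Ad((h')^{-1})$, which equals the identity on $\g$ precisely because $h'\in\G'$ centralizes $\G$; this is exactly what is encoded in the conormal description \eqref{geometricII.2'_1}.
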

\begin{prf}
This follows from Definition \ref{Definition 1.9.Chc}, \eqref{theChc7GLChc} and Theorem \ref{Theorem 8.2.4}. 
\end{prf}
\section{\bf The $\chc$ is the lowest term in the asymptotic expansion of $\Chc$.}\label{asymptotic expansion}

In this section we verify the following theorem which shows that $\chc$ is the `the lowest term in the asymptotic expansion' of $\Chc$.
\begin{thm}\label{asymptoticexpansion}
In terms of Definitions \ref{Definition 1.9.} and \ref{Definition 1.9.Chc}, for $x\in\g$ and 
$x'\in\reg{\h'}$,
\[
\underset{t\to 0}{\lim}\, t^{\dim \W}\int_{\A'\backslash\W_{\A'}} T(-\t c(t^2x)\t c(t^2x'))(w)\,d(\A'w)=\Theta(\wt{-1})\int_{\A'\backslash\W_{\A'}} \chi_{x+x'}(w)\,d(\A' w)\,.
\]
\end{thm}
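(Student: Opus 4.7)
The plan is to use the explicit formula $T(\t g)(w) = \Theta(\t g)\chi_{c(g)}(w)$ from \eqref{eqdefnT}, valid whenever $(g-1)\W = \W$, together with the scaling behavior of the quotient measure $d(\A'w)$ under scalar dilations of $\W$.

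First I verify that for $t$ small enough, $g_t := -c(t^2x)c(t^2x')$ lies in $\Sp^c(\W)$. Since the Cayley transform satisfies $c(Y) = 1 + 2Y + O(Y^2)$,
\[
c(t^2x)c(t^2x') = 1 + 2t^2(x+x') + O(t^4),
\]
so $g_t + 1 = -2t^2(x+x') + O(t^4)$ and $g_t - 1 = -2 + O(t^2)$, the latter invertible. A short computation then yields
\[
c(g_t) = (g_t+1)(g_t-1)^{-1} = t^2(x+x') + R(t), \qquad R(t) = O(t^4).
\]
By \eqref{Thetasquare}, $\Theta^2$ is a continuous function of $g$ near $-1 \in \Sp(\W)$, and choosing lifts consistently one verifies that $\Theta(-\t c(t^2x)\t c(t^2x')) \to \Theta(\wt{-1})$ as $t \to 0$.

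The key geometric step is the change of variables $u = tw$ in the inner integral. Starting from \eqref{measureonthequotient0} and using that scalar multiplication by $t$ commutes with the linear action of $\A'$, one deduces $d(\A'(tu)) = |t|^{\dim\W}\,d(\A'u)$. Applying this substitution, the left-hand side of the theorem becomes
\[
\Theta(-\t c(t^2x)\t c(t^2x')) \int_{\A'\backslash\W_{\A'}} \chi\!\left(\tfrac{1}{4t^2}\langle c(g_t)u,u\rangle\right) d(\A'u).
\]
Inserting $c(g_t) = t^2(x+x') + R(t)$ factors the character as $\chi_{x+x'}(u)\cdot\chi\!\left(\tfrac{1}{4t^2}\langle R(t)u,u\rangle\right)$. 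Since $R(t)/t^2 \to 0$ and $\chi$ has open kernel $\fo_\F$, for each fixed $u$ the error factor equals $1$ once $t$ is sufficiently small, so the integrand converges pointwise to $\chi_{x+x'}(u)$.

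The main obstacle is justifying the passage to the limit under the integral, since the oscillatory integral $\int \chi_{x+x'}(u)\,d(\A'u)$ is not absolutely convergent; strictly the identity must be interpreted as an equality of distributions in $x$. I would close this by pairing both sides against a test function $\psi\in C_c^\infty(\g)$ and invoking the iterated absolute convergence supplied by Lemma \ref{question2} (together with Theorems \ref{Tminnilorb} and \ref{TFofminimalorbit}). The perturbation $R(t)/t^2\to 0$ is then uniformly small in $u$ on the support of $\psi$, and dominated convergence yields the result.
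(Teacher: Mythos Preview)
Your proof is correct and follows the same overall strategy as the paper: write $T(\t g)=\Theta(\t g)\chi_{c(g)}$, rescale $w\mapsto tw$ to extract the factor $t^{-\dim\W}$ from the quotient measure, and let $t\to 0$ inside the Gaussian.

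The one genuine difference is how you handle the Cayley transform of the product. You Taylor-expand $c(Y)=1+2Y+O(Y^{2})$ and carry an error term $R(t)=O(t^{4})$. The paper instead uses the exact rational identity
\[
c\bigl(-c(x)c(x')\bigr)=\bigl((x'-1)(x+x')^{-1}(x-1)+1\bigr)^{-1},
\]
so that $c(g_t)=t^{2}\bigl((t^{2}x'-1)(x+x')^{-1}(t^{2}x-1)+t^{2}\bigr)^{-1}$ exactly. After the rescaling this puts a genuine element $A(t)\in\sp(\W)$ in the subscript of $\chi$, with $A(0)=x+x'$, and no residual error factor to control. This buys a cleaner limit statement: one is simply evaluating the continuous family of oscillatory integrals $\int\chi_{A(t)}(w)\,d(\A'w)$ at $t=0$, rather than arguing that a multiplicative perturbation $\chi(\tfrac{1}{4t^{2}}\langle R(t)u,u\rangle)$ becomes $1$. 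Your remark that the passage to the limit should ultimately be read distributionally (pairing with $\psi\in C_c^\infty(\g)$ and invoking Lemma~\ref{question2}) is a fair point and applies equally to the paper's version, which is silent on this issue.
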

\begin{prf}
Here we follow the proof of \cite[Theorem 2.13]{PrzebindaCauchy}. Recall that in terms of rational functions,
\[
c(-c(x)c(x'))=c(c(x)c(x'))^{-1}=((x'-1)(x+x')^{-1}(x-1)+1)^{-1}\,.
\]
Hence,
\[
c(-c(t^2x)c(t^2x'))=t^2((t^2x'-1)(x+x')^{-1}(t^2x-1)+t^2)^{-1}\,.
\]
Therefore
\begin{multline*}
\int_{\A'\backslash\W_{\A'}} \chi_{ c(-c(t^2x)c(t^2x'))}(w)\,d(\A' w)
=\int_{\A'\backslash\W_{\A'}} \chi_{((t^2x'-1)(x+x')^{-1}(t^2x-1)+t^2)^{-1}}(tw)\,d(\A' w) \\
=
t^{-\dim \W}\int_{\A'\backslash\W_{\A'}} \chi_{((t^2x'-1)(x+x')^{-1}(t^2x-1)+t^2)^{-1}}(w)\,d(\A' w)
\end{multline*}
Hence,
\begin{multline*}
 t^{\dim \W}\int_{\A'\backslash\W_{\A'}} T(-\t c(t^2x)\t c(t^2x'))(w)\,d(\A'w)\\
 = \Theta(-\t c(t^2x)\t c(t^2x'))\int_{\A'\backslash\W_{\A'}} \chi_{((t^2x'-1)(x+x')^{-1}(t^2x-1)+t^2)^{-1}}(w)\,d(\A' w)\,,
\end{multline*}
which has the desired limit if $t\to 0$.
\end{prf}
\section{\bf Conjectures and motivation}\label{Conjectures and motivation}
\ 
Let $\Pi'$ be an irreducible admissible representation of $\wt{\G'}$ which occurs in Howe's correspondence for the pair $(\wt\G, \wt{\G'})$ and let $\Pi_1$ be the corresponding maximal Howe's quotient  representation of $\wt\G$. Let $\Theta_{\Pi'}$ denote the distribution character of $\Pi'$. 

We consider the product $Z'\G'{}^0$ where $Z'$ is the center of $\G'$ and $\G'{}^0$ is the Zariski identity component of $\G'$. We recall Section \ref{The Weyl Harish-Chandra integration formula} that $Z'\G'{}^0=\G'$, unless $\G'$ is an even orthogonal group.

\begin{conj}
For any $\Psi\in C_c(\wt\G)$
\begin{equation}\label{Conjectureforcharacters1}
\int_{\reg{\H'}}\left|\Theta_{\Pi'}(\t h')D(h')\Chc_{\t h'}(\Psi)\right|\,d h' <\infty\,.
\end{equation}
Assuming \eqref{Conjectureforcharacters1}, in terms of  the Weyl - Harish-Chandra integration formula for $\G'$ (see \eqref{Weyl - Harish-Chandra integration formula on G}), define
\begin{equation}\label{Conjectureforcharacters2}
\Theta_{\Pi'}'(\Psi)=
C_{\Pi'}\sum_{\H'}\frac{1}{|W(\H')|}\int_{\reg{\H'}}\check\Theta_{\Pi'}(\t h')|D(h')|\frac{1}{\mu(\H'/\A')}\Chc_{\t h'}(\Psi)\,d\t h'\,,
\end{equation}
where $\check\Theta_{\Pi'}(\t h')=\Theta_{\Pi'}(\t h'^{-1})$ and
\[
C_{\Pi'}=(\text{the central character of $\Pi'$ evaluated at $\wt{-1}$})^{-1}\cdot\Theta(\wt{-1})\,.
\]
If the character $\Theta_{\Pi'}$ is supported in $Z'\G'{}^0$, then, as distributions, 
\begin{equation}\label{Conjectureforcharacters3}
\Theta_{\Pi'}'=\Theta_{\Pi_1}\,.
\end{equation}
\end{conj}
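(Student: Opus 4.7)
The plan is to prove \eqref{Conjectureforcharacters3} by rewriting the right hand side of \eqref{Conjectureforcharacters2} as a trace-type pairing against the character of the Weil representation, and then identifying this pairing with $\Theta_{\Pi_1}$ through the universal property of the maximal Howe quotient. The key reduction is to substitute \eqref{theChc4}, namely $\Chc = \Theta(\wt{-1})^{-1}\Lambda_{\wt{-1}}^*\Theta$, into the pullback formula \eqref{question2.2agChc}, and then to reassemble the sum over conjugacy classes of Cartan subgroups of $\wt{\G'}$ by applying the Weyl--Harish-Chandra formula \eqref{Weyl - Harish-Chandra integration formula on G} in reverse. The factor $1/\mu(\H'/\A')$ in \eqref{Conjectureforcharacters2} is designed to absorb the volume of the $\H'/\A'$-fiber that appears when one converts the pullback along $\t g\mapsto \t g\t h'\in\wt{\A''}$ into integration against $\Theta$ on $\wt{\Sp}(\W)$.

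The first step is to establish absolute convergence \eqref{Conjectureforcharacters1}. Harish-Chandra's local integrability theorem for admissible characters provides a bound of the form $|D(h')|^{1/2}|\Theta_{\Pi'}(\t h')|\leq c(\Pi')$ on compact subsets of $\reg{\wt{\H'}}$. Meanwhile, Lemma \ref{Lemma 1.10.Chc} guarantees that $\t h'\mapsto \Chc_{\t h'}(\Psi)$ is smooth on $\reg{\wt{\H'}}$, and the asymptotic behavior at walls of $\H'$ is controlled by the explicit Weil-type formula \eqref{Thetasquare}, which shows that $\Theta(\t g\t h')$ has an $|D(gh')|^{-1/2}$ singularity up to an eighth root of unity; since compact support of $\Psi$ localizes $\t g$ in $\wt\G$, these bounds combine to dominate $|\Theta_{\Pi'}(\t h')D(h')\Chc_{\t h'}(\Psi)|$ by an integrable function on each conjugacy class.

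With convergence and Fubini justified distributionally, the formal computation becomes
\begin{equation*}
\Theta'_{\Pi'}(\Psi) \;=\; \frac{C_{\Pi'}}{\Theta(\wt{-1})}\int_{\wt{\G}\times\wt{\G'}}\Psi(\t g)\,\check\Theta_{\Pi'}(\t h')\,\Theta(\wt{-1}\,\t g\,\t h')\,d\t g\,d\t h'.
\end{equation*}
By \eqref{theChc3} and the Weil representation realization \eqref{eqdefnT}, the inner integration against $\Theta$ computes a matrix coefficient of $\omega$, so the above pairing is (up to the central twist $\wt{-1}$ absorbed by the definition of $C_{\Pi'}$) the projection of $\omega(\Psi)$ onto the $\Pi'^{\vee}$-isotypic component followed by its trace. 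By the definition of the maximal Howe quotient $\Pi_1$ and the Howe correspondence for $(\wt{\G},\wt{\G'})$, this trace equals $\Theta_{\Pi_1}(\Psi)$.

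The main obstacle will be the rigorous execution of the Fubini interchange and the matrix coefficient identification given that $\Theta_{\Pi'}$ and $\Theta$ are merely locally integrable and the Howe correspondence in the $p$-adic setting must be invoked in distributional form. I expect this will require a regularization: approximating the pairing by smooth truncations that avoid the singular loci of both characters, using the uniform wave front bound of Lemma \ref{Lemma 1.10.Chc} to pass to the limit via the standard continuity of pullbacks in Theorem \ref{Theorem 8.2.4}, and finally invoking the density of regular elements in $Z'\G'^0$ together with the Cayley transform covering \eqref{CocycleAschc} to patch across the support. The hypothesis $\supp\Theta_{\Pi'}\subseteq Z'\G'^0$ is precisely what permits this Cayley patching and handles the disconnectedness issue for even orthogonal $\G'$; how to formulate \eqref{Conjectureforcharacters3} in the remaining non-identity component is the subtlest point and, in my view, the genuine obstacle.
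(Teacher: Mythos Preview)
The statement is a \emph{Conjecture}, and the paper does not prove it. What the paper offers immediately after the conjecture is explicitly labeled ``a heuristic reason why \eqref{Conjectureforcharacters3} should hold,'' and even that heuristic is carried out only under the additional hypothesis that $\G'$ is compact: the formal manipulation unfolds the Weyl--Harish-Chandra formula, uses $T(\wt{-1})\natural T(\t s)(0)=\Theta(\wt{-1}\t s)$ to collapse the $\W$-integral, and ends with the unjustified last equality $\int_{\wt{\G'}}\check\Theta_{\Pi'}(\t g')\int_{\wt{\G}}\Psi(\t g)\Theta(\t g\t g')\,d\t g\,d\t g'=\int_{\wt{\G}}\Psi(\t g)\Theta_{\Pi}(\t g)\,d\t g$. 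So there is nothing to compare your proposal against; the paper's entire point is to define $\Chc_{\t h'}$ and formulate the conjecture, not to establish \eqref{Conjectureforcharacters3}.

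Your outline is essentially the same heuristic, written more carefully, and you correctly flag the genuine obstacles. But the parts you present as settled are not. For \eqref{Conjectureforcharacters1}, smoothness of $\t h'\mapsto\Chc_{\t h'}(\Psi)$ on $\reg{\wt{\H'}}$ (from Lemma \ref{Lemma 1.10.Chc}) says nothing about its growth as $h'$ approaches the walls, and your appeal to the $|D(gh')|^{-1/2}$ behavior of $\Theta$ on $\wt{\Sp}(\W)$ does not transfer to $\Chc_{\t h'}$, which is a pullback of an integral over $\A'\backslash\W_{\A'}$, not an evaluation of $\Theta$. For the displayed identity $\Theta'_{\Pi'}(\Psi)=\frac{C_{\Pi'}}{\Theta(\wt{-1})}\int_{\wt\G\times\wt{\G'}}\Psi(\t g)\check\Theta_{\Pi'}(\t h')\Theta(\wt{-1}\t g\t h')\,d\t g\,d\t h'$, the paper's heuristic only reaches an integral over $\G'\backslash\W_{\A'}$ in general and needs $\G'$ compact to convert it into an integral over $\wt{\G'}$; for non-compact $\G'$ there is no such step available. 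Finally, the assertion that ``this trace equals $\Theta_{\Pi_1}(\Psi)$'' by the definition of the Howe quotient is precisely the content of the conjecture, not a consequence of it: one must show that the $\Pi'$-isotypic projection of the Weil representation has character $\Theta_{\Pi'}\otimes\Theta_{\Pi_1}$ in the required distributional sense, which is open in this $p$-adic setting.
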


Here is a heuristic reason why \eqref{Conjectureforcharacters3} should hold. The right hand side of 
\eqref{Conjectureforcharacters2}, divided by $C_{\Pi'}$, is equal to
\begin{multline*}
\sum_{\H'}\frac{1}{|W(\H')|}\int_{\reg{\H'}}\check\Theta_{\Pi'}(\t h')|D(h')|\frac{1}{\mu(\H'/\A')}\Chc_{\t h'}(\Psi)\,d h'\\
=\sum_{\H'}\frac{1}{|W(\H')|}\int_{\reg{\H'}}\check\Theta_{\Pi'}(\t h')|D(h')|\frac{1}{\mu(\H'/\A')}
\int_{\A'\backslash \W_{\A'}}\int_{\wt{\G}}\Psi(\t g) T(\t g\t h')(w)\,d\t g\,d(\A' w)\\
=\sum_{\H'}\frac{1}{|W(\H')|}\int_{\reg{\H'}}\check\Theta_{\Pi'}(\t h')|D(h')|
\int_{\H'\backslash \W_{\A'}}\int_{\wt{\G}}\Psi(\t g) T(\t g\t h')(w)\,d\t g\,d(\H' w)
\,d h'\\
=\int_{\reg{\G'}}\check\Theta_{\Pi'}(\t g')
\int_{\G'\backslash \W_{\A'}}\int_{\wt{\G}}\Psi(\t g) T(\t g\t g')(w)\,d\t g\,d(\G' w)
\,d\t g'\,.
\end{multline*}
Suppose $\G'$ is compact. Then the above is equal to
\[
\int_{\reg{\G'}}\check\Theta_{\Pi'}(\t g')
\int_{\W}\int_{\wt{\G}}\Psi(\t g) T(\t g\t g')(w)\,d\t g\,dw
\,d\t g'\,.
\]
On the other hand, formally, for any $\t s\in\wt{\Sp}(\W)$,
\begin{multline*}
\Theta(\wt{-1})\int_{\W}T(\t s)(w)\,dw = \int_{\W}\Theta(\wt{-1}) \chi_0(w)T(\t s)(w)\,dw
= \int_{\W}\Theta(\wt{-1}) \chi_0(-w)T(\t s)(w)\,dw\\
=\int_{\W}T(\wt{-1})(-w)T(\t s)(w)\,dw = T(\wt{-1})\natural T(\t s)(0)
=T(\wt{-1}\t s)(0)=\Theta(\wt{-1}\t s)\,.
\end{multline*}
Let $\zeta$ be the central character of $\Pi'$ evaluated at $\wt{-1}$.
Hence the right hand side of \eqref{Conjectureforcharacters2} is equal to
\begin{multline*}
\zeta^{-1}\int_{\wt{\G'}}\check\Theta_{\Pi'}(\t g')\int_{\wt{\G}}\Psi(\t g)\Theta(\wt{-1}\t g\t g'), d\t g\,d\t g'\\
=\zeta^{-1}\int_{\wt{\G'}}\check\Theta_{\Pi'}(\wt{-1}^{-1}\t g')\int_{\wt{\G}}\Psi(\t g)\Theta(\t g\t g')\, d\t g\,d\t g'\\
=\int_{\wt{\G'}}\check\Theta_{\Pi'}(\t g')\int_{\wt{\G}}\Psi(\t g)\Theta(\t g\t g')\,d\t g\,d\t g'=\int_{\wt{\G}}\Psi(\t g)\Theta_{\Pi}(\t g)\,d\t g\,.
\end{multline*}

\appendix

\section{\bf The wave front set}\label{appenA}
\setcounter{thh}{0}
\renewcommand{\thethh}{A.\fontindex{thh}}
\setcounter{equation}{0}
\renewcommand{\theequation}{A.\fontindex{equation}}

In this appendix  we gather some facts from \cite[chapter VIII]{Hormander} that we need in the case when the real numbers are replaced by a $p$-adic field $\F$, i.e. a finite extension of $\Qp$. 

Let $\fo_F$ be the ring of integers in $\F$ and let $\mathfrak p_F \subseteq \fo_F$ the unique maximal proper ideal.
Let $\chi \colon \F \rightarrow {\mathbb{C}}^\times$ be a character of the additive group $\F$ such that the kernel of $\chi$ is equal to $\fo_F$. Denote by $\Ss(U)$  the Schwartz-Bruhat space on an open subset $U\subseteq\F^n$, i.e. the space of complex-valued locally constant functions with compact support on $U$. We identify $\F^n$ with its linear dual $(\F^n)^*$ via the following symmetric bilinear form
\[
x\cdot y=\sum_{j=1}^nx_jy_j=x y^t \,,
\]
where $x$ and $y$ are row vectors in $\F^n$ with the indicated entries.
We normalize the Haar measure $dx=dx_1 dx_2 \ldots dx_n$ on $\F^n$ so that
for $\phi\in\Ss(\F^n)$ the Fourier transform and its inverse are given by
\begin{equation}\label{Fourier transform F}
\cF \phi(\xi)=\int_{\F^n} \phi(x)\chi(-x\cdot \xi)\,dx\qquad (\xi\in \F^n)
\end{equation}
and
\begin{equation}\label{Fourier inversion formula F}
\phi(x)=\int_{\F^n}\cF \phi(\xi)\chi(x\cdot \xi)\,d\xi \qquad (x\in \F^n)\,,
\end{equation}
see  \cite[Corollary 1, page 107]{Weil_Basic}.

Denote by $| \  |$ the  absolute value on $\F$, as in \cite[page 4]{Weil_Basic}, and let
\[
|x|=\max\{|x_1|, \ldots , |x_1|\} \qquad (x=(x_1, \ldots , x_n)\in \F^n)\,.
\]
Fix a subgroup $\Lambda\subseteq\F^\times$ of finite index. We will assume that
\begin{equation}\label{additional assumption on Lambda}
\varpi\in\Lambda\,.
\end{equation}
Then $\Lambda$ is the direct product of the subgroup of $\langle \varpi\rangle\subseteq \F^\times$ generated by $\varpi$ and a subgroup of $\Lambda\cap \fo_F^\times\subseteq \fo_F^\times$ of finite index,
\[
\Lambda=\langle \varpi\rangle(\Lambda\cap \fo_F^\times)\,.
\]

A subset of $\F^n\setminus 0$ is called a cone if it is closed under the multiplication by $\Lambda$. (A more correct name would be a $\Lambda$ - cone. Since $\Lambda$ is fixed, we omit it.) For $n=1$,  there are finitely many elements $x_1, x_2,\ldots , x_l\in \fo_F^\times$ such that
$\F\setminus 0=\cup_{j=1}^l \Lambda x_j$. 

More generally, let $S^{n-1}=\{(x_1, x_2,\ldots , x_n)\in\F^n;\ \max_i|x_i|=1\}$ be the unit sphere. 
This is an open and compact subset of $\F^n$. (See the proof of Theorem \ref{TFofminimalorbit} for more information on the structure of $S^{n-1}$.) Then for any cone $\Gamma\subseteq \F^n\setminus 0$, we have
\begin{equation}\label{structure of cones}
\Gamma=\Lambda\,(\Gamma\cap S^{n-1})
\end{equation}
(see \cite[Lemma 2.5]{Heifetz}). Indeed, if $(x_1, x_2,\ldots , x_n)\in \Gamma$ and $|x_j|=\max_i|x_i|$ with $|x_j|=|\varpi^m|$, then
$\varpi^{-m}(x_1, x_2,\ldots , x_n)\in \Gamma$, because $\varpi\in\Lambda$ by \eqref{additional assumption on Lambda}.

We see from \eqref{structure of cones} that 
$\Gamma$ is open (respectively closed) if and only if $\Gamma\cap S^{n-1}$ is open (respectively closed). 

Moreover $\Gamma\cap S^{n-1} = \bigcup_{i\in I} S_i$ is a union of orbits $S_i$ under the action of the group $\Lambda\cap \fo_\F^\times$. Thus
\[
\Gamma=\bigcup_{i\in I} \langle \varpi\rangle S_i\,.
\]
For a subset $A \subseteq \F^n\setminus 0$ we define the asymptotic cone $\rAC(A)$ of $A$ to be the complement of the union of all the open cones whose intersection with $A$ is bounded. 
Equivalently, a point $x\in \F^n\setminus 0$ belongs to $\rAC(A)$ if and only if every open cone containing $x$ intersects $A$ in an unbounded set.  

Since the unit sphere $S^{n-1}$ is compact, any covering of $\F^n\setminus 0$ by open cones contains a finite subcovering. Therefore, if $\rAC(A)=\emptyset$ then $A$ is bounded. Conversely, if $A$ is bounded, then $\rAC(A) =\emptyset$. Thus, $\rAC(A)=\emptyset$ if and only if $A$ is bounded.

Let $\Ee^*(\F^n)\subseteq \Ss^*(\F^n)$ denote the subspace of the compactly supported distributions. 
For $ v \in \Ee^*(\F^n)$, it operates on the space $C^\infty(\F^n)$ of the locally constant functions in the following way:
\[
v(\phi)=v(\psi\phi) \qquad (\phi\in C^\infty(\F^n), \psi\in \Ss^*(\F^n), \psi=1\ \text{on}\ \supp v)\,.
\]
In particular its Fourier transform
\begin{equation}\label{Formal Fourier transform of v}
\cF v(\xi)=v(\chi(-\ \cdot\xi)) \qquad (\xi\in \F^n)
\end{equation}
is well defined. A straightforward computation shows that  $\cF v\in C^\infty(\F^n)$. Suppose $\supp \cF v$ is bounded. Then $v=\cF^{-1} \cF v$ is a smooth function. This is a special case of
a non-Archimedean version of the Paley-Wiener Theorem which  says that a distribution $v\in\Ee^*(\F^n)$ coincides with the locally constant function \eqref{Formal Fourier transform of v} times the Haar measure if its Fourier transform is compactly supported, i.e. $\cF v\in \Ee^*(\F^n)$. 

Therefore, if $v\in\Ee^*(\F^n)$ is not a smooth function, then $\supp \cF v$ is unbounded. Hence $\rAC(\supp \cF v)\ne \emptyset$. Following \cite[\S 8.1]{Hormander}, we define $\Sigma(v)=\rAC(\supp \cF v)$.  Clearly
\begin{equation}\label{inclusion for sum}
\Sigma(v_1+v_2)\subseteq \Sigma(v_1) \cup\Sigma(v_2) \qquad (v_1, v_2\in \Ee^*(\F^n))\,.
\end{equation}

\begin{lem}\label{A and AplusB}
For any subset $A\subseteq \F^n\setminus 0$ and any bounded set $B\subseteq \F^n$, 
\[
\rAC(A)\supseteq \rAC(A+B)\,.
\]
\end{lem}
\begin{prf}
We need to show that for any open cone $V\subseteq \F^n\setminus 0$, if $V\cap A$ is bounded then $V\cap (A+B)$ is bounded. 

Let $W=V\cap S^{n-1}$, so that $V=\Lambda W$, see \eqref{structure of cones}. Since  $W$ is open, there is $M>0$ such that
\begin{equation}\label{maininclusion}
W-\lambda^{-1} B\subseteq W \qquad (\lambda\in \Lambda, |\lambda|>M)\,.
\end{equation}
Also, since  $V\cap A$ is bounded and since $W$ is bounded away from $0$, there in $N>0$ such that 
\begin{equation}\label{maininclusion1}
\lambda W\cap A=\emptyset \qquad (\lambda\in \Lambda, |\lambda|>N)\,.
\end{equation}
Hence
\[
\lambda W\cap (A+B)=\emptyset \qquad (\lambda\in \Lambda, |\lambda|>\max\{M, N\})\,.
\]
Indeed, if not then there are $a\in A$, $b\in B$, $w\in W$ and $\lambda$ as above such that
\[
\lambda w=a+b\,.
\]
But then
\[
a=\lambda(w-\lambda^{-1}b)\in \lambda(W-\lambda^{-1}B)\subseteq \lambda W\,,
\]
which contradicts \eqref{maininclusion1}. 
\end{prf}

\begin{cor}\label{HoremanderLemma8.1.1}\cite[Lemma 8.1.1]{Hormander}
For $\phi\in \Ss(\F^n)$ and $v\in \Ee^*(\F^n)$, $\Sigma(\phi v)\subseteq\Sigma(v)$.
\end{cor}

\begin{defi}\label{Hormender'sWF}\cite[Lemma 8.1.2]{Hormander}
For $u\in \Ss^*(\F^n)$ define
\[
\Sigma_x(u)=\bigcap_{\phi\in \Ss(\F^n),\ \phi(x)\ne 0} \Sigma(\phi u) \qquad (x\in \F^n)
\]
and the wave front set of $u$,
\[
\WF(u)=\{(x,\xi)\in \F^n\times (\F^n\setminus 0);\ \xi\in \Sigma_x(u)\} \,.
\]
(Note that if $\Sigma_x(u)\ne \emptyset$, then $x\in\supp(u)$.)
\end{defi}

\begin{lem}\label{closertoh}
Fix a subset $A\subseteq\F^n\setminus 0$ and an open compact set $W\subseteq\F^n\setminus 0$. 
For $N>0$ let $\Lambda_{>N}=\{\lambda\in\Lambda;\ |\lambda|>N\}$. \footnote{Loke: We recall that $\Lambda$ is a subgroup of finite index in $\F^\times$.}
Then there is $N$ such that
\[
(\Lambda_{>N}W)\cap A=\emptyset 
\]
if and only if
\[
\Lambda W\cap A \ \ \ \text{is bounded}\,.
\]
\end{lem}
\begin{prf}
Set 
\[
(\Lambda W)_{>M}=\{x\in \Lambda W;\ |x|>M\}\,.
\]
Then
\[
\Lambda_{>N}W\supseteq (\Lambda W)_{>N\cdot\max\{|\xi|;\ \xi\in W\}}
\]
and
\[
(\Lambda W)_{>M}\supseteq \Lambda_{>M/\min\{|\xi|;\ \xi\in W\}}W\,.
\]
This implies the claim.
\end{prf}

\begin{defi}\label{Heifetzsmooth}\cite[pages 288-289]{Heifetz}
A distribution $u\in \Ss^*(\F^n)$ is smooth at a point $(x_0,\xi_0)\in \F^n\times(\F^n\setminus 0)$ if and only if there are open compact subsets $U\subseteq \F^n$ and $W\subseteq \F^n\setminus 0$ such that $x_0\in U$, $\xi_0\in W$ and for any $\phi\in \Ss(U)$ there is $N>0$ such that
\[
(\Lambda_{>N} W)\cap \supp \cF(\phi u)=\emptyset \,.
\]
\end{defi}

\begin{lem}\label{HormenderWFisHeifetzWF}
For any $u\in \Ss^*(\F^n)$, the wave front set of $u$ is equal to the complement of the set of all the smooth points in $\F^n\times(\F^n\setminus 0)$ of $u$.
\end{lem}
\begin{prf}
We see from Lemma \ref{closertoh} that a $(x_0,\xi_0)$ is a smooth point for $u$ if and only if there are open compact subsets $U\subseteq \F^n$ and $W\subseteq \F^n\setminus 0$ such that $x_0\in U$, $\xi_0\in W$ and for any $\phi\in \Ss(U)$ 
\[
\Lambda W\cap \supp\cF(\phi u)
\]
is bounded. Equivalently,  there is an open compact subset $U\subseteq \F^n$ and open cone $V\subseteq \F^n\setminus 0$ such that $x_0\in U$, $\xi_0\in V$ and for any $\phi\in \Ss(U)$ 
\[
V\cap \supp\cF(\phi u)
\]
is bounded.
Equivalently, there is open compact subset $U\subseteq \F^n$ containing $x_0$ such that
\[
\xi_0 \notin \Sigma(\phi u) \qquad (\phi\in \Ss(U))\,.
\]

Suppose $(x_0,\xi_0)$ is a smooth point, then the above implies $\xi_0 \notin \Sigma_{x_0} (u)$ so $(x_0,\xi_0) \notin \WF(u)$.

Conversely $(x_0,\xi_0) \notin \WF(u)$ so $\xi_0 \notin \Sigma_{x_0}(u)$. 
By definition, there is a $\phi\in \Ss(U)$ with $\phi(x_0)\ne 0$, such that $\xi_0\notin \Sigma(\phi u)$. Let $U_0\subseteq U$ be an open set where $\phi$ is constant equal to $\phi(x_0)$. By Corollary \ref{HoremanderLemma8.1.1}, 
\[
\Sigma(\phi_0\phi u)\subseteq \Sigma(\phi u) \qquad (\phi_0\in S(U_0))\,.
\]
Therefore
\[
\xi_0\notin \Sigma(\phi_0 u) \qquad (\phi_0\in \Ss(U_0))\,.
\]
This shows that $(x_0,\xi_0)$ is a smooth point for $u$.
\end{prf}

\begin{cor}\label{WFisClosed}\cite[(8.1.9)]{Hormander}
For any $u\in \Ss^*(\F^n)$, the subset $\WF(u)\subseteq \F^n\times (\F^n\setminus 0)$ is closed. Moreover, 
\[
\WF(\phi u)\subseteq \WF(u) \qquad (\phi\in C^\infty(\F^n))\,.
\]
In particular, if $\phi$ has no zeros, then
\[
\WF(\phi u)= \WF(u)\,.
\]
\end{cor}

\begin{cor}\label{projectionWF}\cite[Proposition 8.1.3]{Hormander}
For any $v\in \Ee^*(\F^n)$ the projection of $\WF(v)$ onto the second component is equal to $\Sigma(v)$. 
\end{cor}
\begin{prf}
Let 
\[
P(\WF(v))=\bigcup_{x\in \F^n}\Sigma_x(v)
\]
denote this projection. Clearly
\begin{equation}\label{projectionWF1}
P(\WF(v))\subseteq \Sigma(v)\,.
\end{equation}
Since $\WF(v)$ is closed and $\supp v$ is compact, the set $\WF(v)\cap (\F^n\times S^{n-1})$ is compact. Hence $P(\WF(v))\cap S^{n-1}$ is compact. Since, by \eqref{structure of cones}, $P(\WF(v))=\Lambda(P(\WF(v))\cap S^{n-1})$, the cone $P(\WF(v))$ is closed. 

In order to show the equality
\[
P(\WF(v))=\Sigma(v)
\]
it will suffice to check that for any open cone $V$ containing $P(\WF(v))$, we have $\Sigma(v) \subseteq V$. Let $V$ be such a cone. 
Suppose $\Sigma_x(v)\ne \emptyset$. Then $x\in \supp v$. Then by \eqref{projectionWF1}, $\Sigma_x(v)\subseteq V$. Hence, by Definition \ref{Hormender'sWF}, there is an open neighborhood $U_x\subseteq \F^n$ such that
\[
\Sigma(\phi v)\subseteq V \qquad (\phi\in \Ss(U_x))\,.
\]
We can cover $\supp v$ by a finite number of such $U_{x_j}$ and choose $\phi_j\in \Ss(U_{x_j})$ with $\sum_j\phi_j=1$ on $\supp v$. Then, by \eqref{inclusion for sum},
\[
\Sigma(v)=\Sigma(\sum_j \phi_j v)\subseteq\bigcup_j \Sigma(\phi_j v)\subseteq V\,.
\]
\end{prf}

\begin{lem}\label{sigma0inacsupp}\cite[Lemma 8.1.7]{Hormander}
For any $u\in \Ss^*(\F^n)$
\[
\Sigma_0(u)\subseteq \rAC(\supp \cF(u))\,.
\]
\end{lem}
\begin{prf}
Let $\phi\in\Ss(\F^n)$ be such that $\phi(0)\ne 0$. Then, by Definition \ref{Hormender'sWF}, 
\[
\Sigma_0(u)\subseteq\Sigma(\phi u)\,.
\]
We also have
\begin{equation*}
\Sigma(\phi u) = \rAC(\supp\cF(\phi u))\subseteq \rAC(\supp\cF(\phi)+\supp\cF(u))
\subseteq  \rAC(\supp\cF(u))\,,
\end{equation*}
where last inclusion follows from Lemma \ref{A and AplusB}, because $\cF(\phi) \in \Ss(\F^n)$ has bounded support.
\end{prf}

\begin{defi}\label{homogdistr}
Let $\chi_\Lambda:\Lambda\to \C^\times$ is a group homomorphism. A distribution $u\in \Ss^*(\F^n)$ is called homogeneous of degree $\chi_\Lambda$ if
\[
u(\phi_\lambda)=\chi_\Lambda(\lambda)u(\phi)\qquad (\lambda\in\Lambda, \phi\in \Ss(\F^n))\,,
\]
where
\[
\phi_\lambda(x)=|\lambda|^{-n}\phi(\lambda^{-1}x)\,.
\]
(An example of such a homogeneous distribution is the Haar measure on $\F^n$. The homogeneous distributions are described completely in \cite[chapter II, section 2.3, page 138]{GelfandGraevPS}.)
\end{defi}
\begin{lem}\label{homogdistr111}
If $u\in \Ss^*(\F^n)$ is a homogeneous distribution, then $\Sigma_0(u)=\supp \cF u$.
\end{lem}
\begin{proof}
Since $u\in \Ss^*(\F^n)$ is homogeneous of degree $\chi_\Lambda$, $\cF u$ is homogeneous of degree $\chi_\Lambda^{-1}|\cdot |^{-n}$. The support of a homogeneous distribution is a cone so it is equal to its asymptotic cone. Lemma \ref{sigma0inacsupp} gives
\[
\Sigma_0(u)\subseteq \rAC(\supp \cF(u)) = \supp \cF u\,.
\]
Suppose $0\ne \xi\notin \Sigma_0(u)$. We will show that $\xi \not \in \supp \cF u$. This will prove $\Sigma_0(u)\supseteq \supp \cF u$.

Now $0 \ne \xi\notin \Sigma_0(u)$ means that
$(0,\xi)$ is a smooth point of the distribution $u$.
By definition, there is an open compact set $U$ containing $0$, an open compact set $W$ containing $\xi$ and a number $N>0$ such that
\[
\cF(\phi u)(\lambda w)=0 \qquad (\lambda\in \Lambda\,,\ |\lambda|>N\,,\ w\in W)\,.
\]
Let 
\[
L_w\psi(x)=\psi(x-w)\,,\ R\psi(x)=\psi(-x)\,,\ D_{\lambda^{-1}}\psi(x)=|\lambda|^{n}\psi(\lambda x)\,.
\]
Then a simple computation shows that
\[
\cF(\phi u)(\lambda w)=\chi_\Lambda(\lambda)^{-1}\cF u(L_w R D_{\lambda^{-1}} \cF\phi)\,.
\]
Hence
\begin{equation} \label{eqFuLw}
\cF u(L_w R D_{\lambda^{-1}} \cF\phi)=0 \qquad (\lambda\in \Lambda\,,\ |\lambda|>N\,,\ w\in W)\,. 
\end{equation}
For $\epsilon>0$ let $B_\epsilon\subseteq \F^n$ denote the closed ball of radius $\epsilon$ centered at $0$. Let $1_{B_\epsilon+w}$ be the indicator function of $B_\epsilon+w$. We set
\[
\phi= \cF^{-1}D_{\lambda}RL_{-w}1_{B_\epsilon+w}=\cF^{-1}D_{\lambda}R1_{B_\epsilon}
=|\lambda|^{-n}\cF^{-1}D_{\lambda}1_{B_\epsilon}=\cF^{-1}1_{B_{|\lambda|\epsilon}}
=C(|\lambda|)1_{B_{\frac{1}{|\lambda|\epsilon}}}\,,
\]
where $C(|\lambda|)$ is a non-zero constant. For $|\lambda|$ large enough $\phi\in \Ss(U)$. This implies
\[
1_{B_\epsilon+w}=L_w R D_{\lambda^{-1}} \cF\phi\,.
\] 
By \eqref{eqFuLw} we have
\begin{equation} \label{eqcFu}
\cF u(1_{B_\epsilon+w})=0\,,
\end{equation}

For any $w \in W$, we can find $\epsilon>0$ such that $B_\epsilon+w\subseteq W$. Then \eqref{eqcFu} implies that 
\[
W\cap \supp \cF u=\emptyset.
\]
In particular $\xi \not \in \supp \cF$ as required. 
\end{proof}
The following theorem does not seem to appear in the literature, though the proof is a straightforward adaptation of the argument used by H\"ormander.

\begin{thm}\label{a1}\cite[Theorem 8.1.8]{Hormander}
Suppose $u\in\Ss^*(F^n)$ is homogeneous. Then
\begin{eqnarray}
\begin{array}{llllll}
(x,\xi)\in \WF(u) &\iff &(\xi,-x)\in \WF(\cF u)  &(x\ne 0\ \text{and}\ \xi\ne 0)\,,\nn\\
x\in\supp u &\iff  &(0,-x)\in \WF(\cF u) &(x\ne 0)\,,\nn\\
\xi\in\supp \cF u &\iff  &(0,\xi)\in \WF(u) &(\xi\ne 0)\nn\,.
\end{array}
\end{eqnarray}
\end{thm}

\begin{prf}
The Fourier transform of a homogeneous distribution is homogeneous and the composition of two Fourier transforms is the reflection. Hence the last two statements are equivalent. Moreover the last one coincides with the equality of Lemma \ref{homogdistr111}. Thus it'll suffice to verify the first one.

Since the composition of two Fourier transforms is the reflection, it'll suffice to show that for any $x_0\ne 0$ and $\xi_0\ne 0$, 
\begin{equation}\label{a1.implication}
(x_0,\xi_0)\notin \WF(u) \implies (\xi_0,-x_0)\notin \WF( \cF u)\,.
\end{equation}

In order to simplify notation we shall follow Gelfand and write
\[
u(\phi)=\int_{\F^n} u(x) \phi(x)\,dx
\]
for the value of the distribution $u$ on a test function $\phi$. Choose $\phi\in \Ss(\F^n)$ equal to $1$ in a neighborhood of $\xi_0$ and $\psi\in \Ss(\F^n)$ equal to $1$ in a neighborhood of $x_0$ so that
\begin{equation}\label{a1.4}
(\supp \psi \times \supp \phi)\cap \WF(u)=\emptyset\,.
\end{equation}
Let $v=\phi\cF u$. We need to estimate $\cF v(-\lambda x)$ in a $\Lambda$-conic neighborhood of $-x_0$.
Then for $\lambda\in\Lambda$ and $x\in \F^n$
\begin{eqnarray}\label{a1.5}
\cF v(-\lambda x)&=&\cF\phi*(\cF^2 u)(-\lambda x)=\int_{\F^n} \cF\phi(-\lambda x-y)(\cF^2 u)(y)\,dy\\
&=&\int_{\F^n} \cF\phi(-\lambda x+y) u(y)\,dy=\chi_\Lambda(\lambda)|\lambda|^{n}\int \cF\phi(\lambda(y-x))u(y)\,dy\nn\\
&=&\chi_\Lambda(\lambda)|\lambda|^{n}\int \cF\phi(\lambda(y-x))(\psi u)(y)\,dy\nn\\
& & {} + \chi_\Lambda(\lambda)|\lambda|^{n}\int \cF\phi(\lambda(y-x))((1-\psi) u)(y)\,dy\nn\,.
\end{eqnarray}

Let $r>0$ be such that $(1-\psi)(y)\ne 0$ implies $|y-x_0|>2r$. Then $|x-x_0|<r$ implies $|y-x|>r$. Thus there is $N>0$ such that 
\[
\cF\phi(\lambda(y-x))=0\qquad ((1-\psi)(y)\ne 0,\ |x-x_0|<r,\ |\lambda|>N)\,.
\]
Hence,
\begin{equation}\label{a1.6}
\cF\phi(\lambda(y-x))(1-\psi)(y)=0\qquad (|x-x_0|<r,\ |\lambda|>N)\,.
\end{equation}
Also, by the adjoint property of Fourier transform, 
\begin{equation}\label{a1.7}
\int \cF\phi(\lambda (y-x))(\psi u)(y)\,dy=\int \cF(\psi u)(\lambda \xi)\phi(\xi)\chi(\lambda x\cdot \xi)\,d\xi\,.
\end{equation}
The condition \eqref{a1.4} implies that there is $M>0$ such that
\[
\cF(\psi u)(\lambda \xi)=0\qquad (|\lambda|>M,\ \phi(\xi)\ne 0)\,.
\]
Thus \eqref{a1.7} is zero for $|\lambda|>M$ independently of $x$.
Therefore \eqref{a1.5} is zero if $|x-x_0|<r$ and $|\lambda|>\max\{M, N\}$. This verifies \eqref{a1.implication}.
\end{prf}

\begin{thm}\label{Theorem 8.1.5Hormander}\cite[Theorem 8.1.5]{Hormander}
Let $V\subseteq \F^n$ be a subspace and let $V^\perp\subseteq \F^n$ be the orthogonal complement, so that $\F^n=V\oplus V^\perp$. Let $u_V\in \Ss^*(V)$ be a multiple of the Haar measure $dx_V$ by a nonzero smooth function. Denote by $\delta\in \Ss^*(V^\perp)$ the Dirac delta at the origin. Then
\[
\WF(u_V\otimes \delta)=\supp u_V\times V^\perp\setminus 0\,.
\]
\end{thm}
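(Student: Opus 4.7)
The plan is to reduce the computation to a Fourier-transform calculation against product test functions and then identify the asymptotic cone of the resulting support set.

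First, since $u_V\otimes\delta$ is supported on $\supp u_V\times\{0\}\subseteq V\oplus V^\perp$, the remark in Definition~\ref{Hormender'sWF} already forces $\WF(u_V\otimes\delta)\subseteq (\supp u_V\times\{0\})\times(\F^n\setminus 0)$. Writing $u_V=f\cdot dx_V$ with $f$ a nonzero locally constant function (so $\supp u_V=\{f\ne 0\}$), the task reduces to showing $\Sigma_{(x_0,0)}(u_V\otimes\delta)=V^\perp\setminus 0$ for every $x_0\in\supp u_V$.

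Next, I would localize with a product function. Given $\phi_V\in\Ss(V)$ with $\phi_V(x_0)\ne 0$ and $\phi_{V^\perp}\in\Ss(V^\perp)$ with $\phi_{V^\perp}(0)\ne 0$,
\[
(\phi_V\otimes\phi_{V^\perp})(u_V\otimes\delta)=\phi_{V^\perp}(0)\cdot(\phi_V u_V)\otimes\delta,
\]
and $\phi_V u_V=(\phi_V f)\cdot dx_V$ with $\phi_V f$ a nonzero Schwartz--Bruhat function on $V$, so $\cF(\phi_V u_V)\in\Ss(V)$ is nonzero. Fubini combined with $\cF(\delta)=1$ gives
\[
\cF\!\left[(\phi_V u_V)\otimes\delta\right](\xi_V,\xi_{V^\perp})=\cF(\phi_V u_V)(\xi_V),
\]
so the support of this Fourier transform is $A\times V^\perp$, where $A:=\supp\cF(\phi_V u_V)\subseteq V$ is nonempty and bounded.

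The geometric heart is the identity $\rAC(A\times V^\perp)=V^\perp\setminus\{0\}$ for every nonempty bounded $A\subseteq V$. For $\supseteq$: given $\xi_0\in V^\perp\setminus 0$, any open cone $\Gamma$ containing $\xi_0$ must, by \eqref{structure of cones}, contain $\Lambda W$ for an open neighborhood $W\subseteq S^{n-1}$ of $\xi_0/|\xi_0|$; for each fixed $a\in A$, the point $(a/\lambda,\xi_0/|\xi_0|)$ lies in $W$ once $|\lambda|$ is large enough, and scaling by $\lambda$ yields $(a,\lambda\xi_0/|\xi_0|)\in\Gamma\cap(A\times V^\perp)$ of arbitrarily large norm. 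For $\subseteq$: if $\xi_0$ has a nonzero $V$-component, then by the ultrametric inequality a sufficiently small $\Lambda$-cone around $\xi_0$ can be arranged so that the $V$-component of any of its elements is bounded below by a fixed positive fraction of the total norm; boundedness of $A$ then traps the intersection with $A\times V^\perp$ inside a bounded set.

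Combining the two previous paragraphs yields $\Sigma((\phi_V\otimes\phi_{V^\perp})(u_V\otimes\delta))=V^\perp\setminus 0$ for every admissible choice of $\phi_V,\phi_{V^\perp}$. Taking $\phi=\phi_V\otimes\phi_{V^\perp}$ in Definition~\ref{Hormender'sWF} gives $\Sigma_{(x_0,0)}(u_V\otimes\delta)\subseteq V^\perp\setminus 0$. For the reverse inclusion, let $\phi\in\Ss(\F^n)$ be arbitrary with $\phi(x_0,0)\ne 0$; being locally constant, $\phi$ equals some constant $c\ne 0$ on an open neighborhood $U$ of $(x_0,0)$, and for any $\phi_V\otimes\phi_{V^\perp}$ supported in $U$ with $\phi_V(x_0),\phi_{V^\perp}(0)\ne 0$ one has $(\phi_V\otimes\phi_{V^\perp})\phi=c(\phi_V\otimes\phi_{V^\perp})$, so Corollary~\ref{HoremanderLemma8.1.1} applied to the compactly supported $\phi(u_V\otimes\delta)$ gives
\[
V^\perp\setminus 0=\Sigma\bigl((\phi_V\otimes\phi_{V^\perp})(u_V\otimes\delta)\bigr)\subseteq\Sigma\bigl(\phi(u_V\otimes\delta)\bigr),
\]
and intersecting over all such $\phi$ completes the proof. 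The main obstacle is the asymptotic-cone computation in the third paragraph, which needs careful exploitation of the $\Lambda$-cone structure on $\F^n\setminus 0$; once that geometric fact is in hand, the remaining steps are routine bookkeeping with the Fourier transform and the localization corollary.
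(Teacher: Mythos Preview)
Your argument is correct, but it takes a different route from the paper's. The paper first invokes Corollary~\ref{WFisClosed} to strip the smooth density and reduce to $u_V=dx_V$; then $u_V\otimes\delta$ is genuinely homogeneous, so Lemma~\ref{homogdistr111} (equivalently Theorem~\ref{a1}) gives $\Sigma_0(u_V\otimes\delta)=\supp\cF(u_V\otimes\delta)\setminus 0=\{0\}\times V^\perp\setminus 0$ in one line, and translation invariance propagates this to every $x\in\supp u_V$. Your approach instead keeps the general density $f$, localizes with product test functions, and computes $\rAC(A\times V^\perp)$ by hand from the $\Lambda$-cone structure. The paper's proof is shorter because it cashes in the homogeneity machinery already developed in the appendix; yours is more self-contained, avoids both the reduction step and Theorem~\ref{a1}, and makes the geometry of the asymptotic cone explicit. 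Both are valid; the paper's is the more economical choice given what precedes it.
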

\begin{prf}
We see from Corollary \ref{WFisClosed} that we may assume $u_V=dx_V$. 
Then the distribution $u_V\otimes \delta$ is homogeneous, so Theorem \ref{a1} implies 
\[
\Sigma_0(u_V\otimes \delta)=\supp\cF(u_V\otimes \delta)\setminus 0=\{0\}\times V^\perp\setminus 0\,.
\]
However, it is clear from Definition \ref{Hormender'sWF} that
\begin{eqnarray*}
\Sigma_x(u_V\otimes \delta)&=&\Sigma_0(u_V\otimes \delta)\qquad (x\in \supp u_V)\,,\\
\Sigma_x(u_V\otimes \delta)&=&\emptyset\qquad (x \notin \supp u_V)\,.
\end{eqnarray*}
\end{prf}

\begin{lem}\label{Theorem 7.7.1Hormander}\cite[Theorem 7.7.1]{Hormander}, \cite[Proposition 1.1]{Heifetz}
Let $U\subseteq \F^n$ be an open compact subset and let $f:U\to\F$ be a differentiable function such that 
\[
f(x_0+x)=f(x_0)+f'(x_0)\cdot x+R(x_0,x)(x)\cdot x \qquad (x_0,\ x_0+x\in U)\,,
\]
where $f'(x_0)\in\F^n$ is the gradient of $f$ at $x_0$ and $R(x_0,x):\F^n\to\F^n$ is a linear function with 
\[
B:=\max_{x_0, x_0+x\in U, |y|=1}|R(x_0, x)(y)\cdot y|<\infty\,.
\]
We also assume that
\[
\delta:=\min_{x_0\in U}|f'(x_0)|>0\,.
\]
Let $\phi\in \Ss(U)$ and
let $m_0\in \Bbb Z$ be the minimum of the $m\in \Bbb Z$ such that there is a finite disjointed covering
\[
U=\bigsqcup_k \left(x_k+B_{q^{-m}}\right)
\]
and $\phi$ is constant on each $x_k+B_{q^{-m}}$.
(The covering exists because $\phi$ is locally constant and $U$ is open and compact.)
Then
\begin{equation}\label{Theorem 7.7.1Hormander3}
\int_U\chi(\lambda f(x))\phi(x)\,dx=0 \qquad(\lambda \in \F^\times\,,\ |\lambda|>\max\{q\delta^{-2}B, \delta^{-1}q^{m_0}\})\,.
\end{equation}
\end{lem}
\begin{prf}
With the covering as above,
\[
\int_U\chi(\lambda f(x))\phi(x)\,dx=\Sigma_k\, \chi(\lambda f(x_k))\phi(x_k)\int_{B_{q^{-m}}}\chi(\lambda f'(x_k)\cdot x)\chi(\lambda R(x_k,x)(x)\cdot x)\,dx\,.
\]
Notice that for $|\lambda|>\delta^{-1}q^m$, the function
\[
B_{q^{-m}}\ni x\to \chi(\lambda f'(x_k)\cdot x)\in \C^\times
\]
is a non-trivial character of the additive group $B_{q^{-m}}$. Indeed if this character was trivial then we would have
\[
|\lambda f'(x_k)\cdot x|\leq 1 \qquad (x\in B_{q^{-m}})\,.
\]
Choose $x\in B_{q^{-m}}$ so that $|f'(x_k)\cdot x|=|f'(x_k)||x|$. If, in addition $|x|=q^{-m}$, then 
\[
|\lambda||f'(x_k)|q^{-m}\leq 1\,.
\]
This would imply
\[
|\lambda|\delta q^{-m}\leq 1\,,
\]
which would contradict the choice of $|\lambda|>\delta^{-1}q^m$. 
By taking $m=m_0$ we see that if $B=0$ then
\begin{equation}\label{Theorem 7.7.1Hormander1}
\int_U\chi(\lambda f(x))\phi(x)\,dx=0 \qquad(\lambda \in \F^\times\,,\ |\lambda|>\delta^{-1}q^{m_0})\,.
\end{equation}
Assume from now on that $B>0$. If $|\lambda|Bq^{-2m}\leq 1$, then
\[
\chi(\lambda R(x_k,x)(x)\cdot x)=1\qquad (x\in B_{q^{-m}})\,.
\]
Hence
\begin{equation}\label{Theorem 7.7.1Hormander2}
\int_{B_{q^{-m}}}\chi(\lambda f'(x_k)\cdot x)\chi(\lambda R(x_k,x)(x)\cdot x)\,dx=0 \qquad (\delta^{-1}q^m<
|\lambda|\leq B^{-1}q^{2m})\,.
\end{equation}
Let $\delta=q^{-m_\delta}$ and $B=q^{m_B}$. Then the interval $(\delta^{-1}q^m, B^{-1}q^{2m}]=(q^{m_\delta+m}, q^{-m_B+2m}]$ is not empty if $m_\delta+m_B<m$. 
By writing $m_\delta+m_B+k=m$, we see that

\begin{eqnarray*}
&&\bigcup_{m_\delta+m_B<m}(m_\delta+m, -m_B+2m]=\bigcup_{k=1}^\infty(2m_\delta+m_B+k, 2m_\delta+m_B+2k]\\
&=&2m_\delta+m_B+\bigcup_{k=1}^\infty(k, 2k]=2m_\delta+m_B+(1,\infty)=(2m_\delta+m_B+1, \infty)\,.
\end{eqnarray*}

Since, $q^{2m_\delta+m_B+1}=q\delta^{-2}B$, we see that for any $\lambda$ with $|\lambda|> q\delta^{-2}B$ there is an $m>m_\delta+m_B$ such that $\delta^{-1}q^m<|\lambda|\leq B^{-1}q^{2m}$, so that \eqref{Theorem 7.7.1Hormander2} holds for this $m$. If in addition $m\geq m_0$, then the function $\phi$ is constant on each $x_k+B_{q^{-m}}$. Hence
\[
\int_U\chi(\lambda f(x))\phi(x)\,dx=0 \qquad(\lambda \in \F^\times\,,\ |\lambda|> q\delta^{-2}B, m\geq m_0)\,.
\]
Next we wold like to express the condition $m\geq m_0$ in terms of $|\lambda|$. If $m_0\leq m_\delta+m_B$, then $m>m_0$ because $m>m_\delta+m_B$. If $m_0> m_\delta+m_B$ then we need a condition on $|\lambda|$ such that the non-empty interval $(m_\delta+m_0, -m_B+2m_0]$ is to the left of 
$(m_\delta+m, -m_B+2m]$, with a possible overlap. This will happen if $m_\delta+m_0<\log_q|\lambda|$. Equivalently if $\delta^{-1}q^{m_0}<|\lambda|$. Hence 
\begin{equation}\label{Theorem 7.7.1Hormander4}
\int_U\chi(\lambda f(x))\phi(x)\,dx=0 \qquad(\lambda \in \F^\times\,,\ |\lambda|> 
\max\{q\delta^{-2}B,\delta^{-1}q^{m_0}\})\,.
\end{equation}
Clearly \eqref{Theorem 7.7.1Hormander3} follows from \eqref{Theorem 7.7.1Hormander1} and \eqref{Theorem 7.7.1Hormander4}.
\end{prf}
Let $X\subseteq \F^n$ be an open set. Denote by $\Ss(X)\subseteq\Ss(\F^n)$ the subset of functions supported in $X$ and let $\Ss(X)^*$ be the dual of $\Ss(X)$ with the topology of pointwise convergence (weak topology).
\begin{lem} \label{Theorem218Hormander}
Let $u_j\in \Ss(X)^*$ be a sequence such that for every $\phi\in\Ss(X)$, the set $\{u_j(\phi);\ j=1,2,3,\ldots \}$ is bounded. Then for any $\psi\in \Ss(X)$ and any $0<r<\infty$,
\[
\sup_{|\eta|\leq r}|\cF(\psi u_j)(\eta)|<\infty\,.
\]
\end{lem}
\begin{proof}
Let $\chi_{\eta}(x)= \chi(\eta\cdot x)$. Explicitly
\[
\cF(\psi u_j)(\eta)=\int_X \chi(-\eta\cdot x)\psi(x) u_j(x)\,dx  = u_j(\chi_{-\eta} \psi) \,.
\]
We claim that
\[
\chi(-\eta\cdot x)=\chi(-\eta\cdot y)\ \ \ \text{if}\ \ \ |x-y|\leq r^{-1} \,.
\]
Indeed $\chi(-\eta\cdot x)=\chi(-\eta\cdot y)$ if $|\eta \cdot (x-y)| < 1$. By the Cauchy-Schwarz inequality, $|\eta \cdot (x-y)| \leq |\eta| |x-y| \leq r |x-y|$. The claim follows.

The set of functions
\[
\{ \chi(- \eta \cdot x) : |\eta|\leq r \}
\]
is uniformly locally constant. Therefore the functions
\[
\chi(-\eta\cdot x)\psi(x) \qquad (|\eta|\leq r)
\]
are uniformly locally constant and all supported in $\supp \psi$. Hence they all belong to a finite dimensional subspace $\Ss(X)_\phi\subseteq \Ss(X)$. The restriction of the sequence $u_j$ to $\Ss(X)_\phi$ is weakly bounded.  By the Uniform Boundedness Principle, or jst because the space $\Ss(X)_\phi$ is finite dimensional, it is uniformly bounded and we are done.
\end{proof}
A subset $\Gamma\subseteq X\times(\F^n\setminus 0)$  is called a cone if $(x,\xi)\in \Gamma \iff (x,\lambda \xi)\in \Gamma$ for any $\lambda\in\Lambda$. For any closed cone $\Gamma\subseteq X\times(\F^n\setminus 0)$ let $\Ss^*_\Gamma(X)\subseteq \Ss^*(X)$ denote the subset of all the distributions whose wave front set is contained in $\Gamma$. We introduce a topology in $\Ss^*_\Gamma(X)$ as follows. Unfortunately this definition is missing in \cite{Heifetz}. This fact was noticed and corrected in 
\cite{UD2}. 
We provide a concise argument, as close as possible to the one used by H\"ormander in the real case, below.
\begin{defi}\label{topologyinssgamma}
A sequence $u_j\in \Ss^*_\Gamma(X)$ converges to $u\in \Ss^*_\Gamma(X)$ if and only if for any $\phi\in \Ss(X)$,
\begin{equation}\label{topologyinssgamma1}
\underset{j\to\infty}{\lim}\,u_j(\phi)=u(\phi)
\end{equation}
and for any open cone $V\subseteq \F^n\setminus 0$,
\begin{equation}\label{topologyinssgamma2}
\text{if}\ \ \ \Gamma\cap(\supp\phi \times V)=\emptyset\,,\ \ \ \text{then}\ \ \ 
\bigcup_j\supp\cF (\phi u_j)\cap V\ \ \ \text{is bounded}\,.
\end{equation}
\end{defi}
\begin{lem}\label{Hormender 8.2.3}\cite[Theorem 8.2.3]{Hormander}
For any $u\in \Ss_\Gamma(X)$ there is a sequence $u_j\in \Ss(X)$ such that $u=\underset{j\to\infty}{\lim} u_j$ in $\Ss^*_\Gamma(X)$.
\end{lem}
\begin{prf}
Let $u_j=\phi_j*(\psi_j u)$, where 
\begin{enumerate}[(a)]
\item $\psi_j\in\Ss(X)$ with $\psi_j=1$ on any 
compact subset of $X$ for large $j$, and 
\item $0\leq \phi_j\in\Ss(X)$, $\int\phi_j(x)\,dx=1$, $\supp \phi_j+\supp \psi_j\subseteq X$ and $\supp \phi_j$ shrinks to zero if $j$ goes to infinity.
\end{enumerate}
Then $u_j\in \Ss(X)$ and \eqref{topologyinssgamma1} holds. 
If $\phi$ and $V$ satisfy \eqref{topologyinssgamma2} choose $\psi\in \Ss(X)$ equal to $1$ in a neighborhood of $\supp \phi$ and  a closed cone $W\subseteq \F^n\setminus 0$ containing $V$ in the interior such that
\begin{equation}\label{Hormender 8.2.3.0}
\Gamma\cap(\supp\psi \times W)=\emptyset\,.
\end{equation}
Set $w_j=\phi_j*(\psi u)$. Then $\phi u=\phi \psi u$, so that $\phi u_j=\phi w_j$ for large $j$. Also
\[
|\cF  w_j|\leq |\cF \phi_j||\cF(\psi u)|\leq |\cF(\psi u)|\,.
\]
Hence,
\[
\bigcup_{j\ \text{large}}\supp \cF(\phi u_j)=\bigcup_{j\ \text{large}}\supp \cF(\phi w_j)
\subseteq \supp \cF(\psi u)\,.
\]
Corollary \ref{WFisClosed} shows that $\WF(\psi u)\subseteq \WF(u)\cap (\supp\,\psi\times \F^n)$. Hence, $\WF(\psi u)\subseteq \Gamma\cap (\supp\,\psi\times \F^n)$. 
Corollary \ref{projectionWF} implies that the projection of $\WF(\psi u)$ onto the second variable is equal to $\Sigma(\psi u)$.
Therefore the condition \eqref{Hormender 8.2.3.0} implies that the set $\supp \cF(\psi u)\cap W$ is bounded.
Hence the claim follows. 
\end{prf}
By taking $\psi_j=1$ in the proof of Lemma \ref{Hormender 8.2.3} we obtain the following Corollary.
\begin{cor}\label{Hormender 8.2.3.cor}
Fix $u\in \Ss^*_\Gamma(X)$.
Let $0\leq \phi_j\in\Ss(X)$, ${\displaystyle \int\phi_j(x)\,dx=1}$, $\supp \phi_j+\supp u\subseteq X$ and $\supp \phi_j$ shrinks to 0 if $j$ goes to infinity. Then
$u=\underset{j\to\infty}{\lim}\, \phi_j*u$ in $\Ss^*_\Gamma(X)$.
\end{cor}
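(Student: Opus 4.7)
The plan is to specialize the proof of Lemma \ref{Hormender 8.2.3} to the case $\psi_j \equiv 1$, so that $u_j := \phi_j * u$. The hypothesis $\supp \phi_j + \supp u \subseteq X$ makes this convolution a well-defined distribution on $X$, and I need only check the two conditions of Definition \ref{topologyinssgamma} for the sequence $u_j$.

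Condition \eqref{topologyinssgamma1} is essentially trivial in the p-adic setting: writing $u_j(\phi) = u(\check\phi_j * \phi)$ with $\check\phi_j(y) = \phi_j(-y)$, the fact that $\phi \in \Ss(X)$ is locally constant together with $\supp \check\phi_j \to 0$ forces $\check\phi_j * \phi = \phi$ identically for all sufficiently large $j$, so that $u_j(\phi) = u(\phi)$ eventually.

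For condition \eqref{topologyinssgamma2}, given $\phi$ and an open cone $V$ with $\Gamma \cap (\supp \phi \times V) = \emptyset$, I will select $\psi \in \Ss(X)$ equal to $1$ on a neighborhood $U_0 \supseteq \supp\phi$ together with a closed cone $W$ containing $V$ in its interior such that $\Gamma \cap (\supp\psi \times W) = \emptyset$; this is possible because $\Gamma$ is closed and cones in $\F^n \setminus 0$ are controlled by the compact unit sphere $S^{n-1}$. Setting $w_j := \phi_j * (\psi u)$, for $j$ large enough that $\supp \phi - \supp \phi_j \subseteq U_0$ the pointwise identity $\phi \cdot u_j = \phi \cdot w_j$ holds, since the value of $\phi_j * u$ on $\supp \phi$ depends only on $u|_{U_0}$, where $\psi \equiv 1$.

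From this point the argument is a verbatim specialization of Lemma \ref{Hormender 8.2.3}: using $|\cF \phi_j| \leq 1$ one has $\supp \cF w_j \subseteq \supp \cF(\psi u)$ uniformly in $j$, hence $\supp \cF(\phi u_j) \subseteq \supp \cF \phi + \supp \cF(\psi u)$ for large $j$. Corollary \ref{WFisClosed} and Corollary \ref{projectionWF} together with the choice of $W$ give $\rAC(\supp \cF(\psi u)) \cap W = \emptyset$, which by a finite subcover argument using compactness of $W \cap S^{n-1}$ implies $\supp \cF(\psi u) \cap W$ is bounded. Lemma \ref{A and AplusB}, combined with the fact that $\supp \cF \phi$ is bounded and $\overline V \subseteq W$, then forces $(\supp \cF \phi + \supp \cF(\psi u)) \cap V$ to have empty asymptotic cone, so this intersection is bounded, verifying \eqref{topologyinssgamma2}. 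The only mildly delicate step is the joint construction of $\psi$ and $W$, which is already carried out in the parent lemma.
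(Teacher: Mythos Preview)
Your proposal is correct and follows exactly the approach the paper intends: the paper's entire proof is the single remark ``By taking $\psi_j=1$ in the proof of Lemma \ref{Hormender 8.2.3} we obtain the following Corollary,'' and what you have written is a careful unpacking of precisely that specialization. Your additional details (the eventual identity $\check\phi_j * \phi = \phi$, the localization $\phi u_j = \phi w_j$ via $\supp\phi - \supp\phi_j \subseteq U_0$, and the boundedness argument via $|\cF\phi_j|\leq 1$ and Lemma~\ref{A and AplusB}) are all sound elaborations of steps the parent lemma already contains.
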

\begin{thm}\label{Theorem 8.2.4}\cite[Theorem 8.2.4]{Hormander}, \cite[Theorem 2.8]{Heifetz}
Let $X\subseteq \F^m$ and $Y\subseteq \F^n$ be open subsets and let $f:X\to Y$ be an analytic map.
Set
\[
N_f=\{(f(x),\eta)\in Y\times(\F^n\setminus 0);\ f'(x)^t \eta=0\}\,.
\]
Then for any closed cone $\Gamma\subseteq Y\times(\F^n\setminus 0)$ such that
\[
\Gamma\cap N_f=\emptyset\,,
\]
The map $f^* \colon \C^\infty(Y) \rightarrow \C^\infty(X)$ given by
\[
f^*(u) = u\circ f
\]
extends uniquely to a continuous map
\[
f^*:\Ss^*_\Gamma(Y)\to \Ss^*_{f^*\Gamma}(X)\,,
\]
where
\[
f^*\Gamma=\{(x, f'(x)^t\eta);\ (x,\eta)\in \Gamma\}\,.
\]
In particular
\[
\WF(f^*u)\subseteq f^*\WF(u) \qquad (u\in \Ss^*_\Gamma(\F^n))\,.
\]
\end{thm}
\begin{proof}
Define $f^*u=u\circ f$ whenever $u\in C^\infty(Y)$. By Lemma \ref{Hormender 8.2.3} the theorem will be proven if we show that $f^*$ maps sequences $u_j\in C^\infty(Y)$ converging in $\Ss^*_\Gamma(Y)$ to sequences $f^*u_j\in C^\infty(X)$ converging in $\Ss^*_{f^*\Gamma}(Y)$. First we shall prove the convergence in $\Ss^*(X)$. 

If $u\in \Ss(X)$ and $\phi\in \Ss(X)$ then by Fourier inversion formula applied to $u$
\begin{eqnarray}\label{Theorem 8.2.5}
f^*u(\phi)&=&\int_X u(f(x))\phi(x)\,dx
=\int_X \int_{\F^n} \cF u(\eta) \chi(f(x)\cdot\eta)\,d\eta\,\phi(x)\,dx\nn\\
&=&\int_{\F^n} \cF u(\eta)\,I_\phi(\eta)\,d\eta\,,\nn\\
I_\phi(\eta)&=&\int_X \chi(f(x)\cdot\eta) \phi(x) \,dx\,.
\end{eqnarray}
The formula \eqref{Theorem 8.2.5} expresses the pullback in terms of the Fourier transform and therefore allows us to use assumptions expressed in terms of the wave front set.

Let $x_0\in X$, $y_0=f(x_0)$, $\Gamma_{y_0}=\{\eta;\ (y_0,\eta)\in\Gamma\}$.  Choose
\begin{enumerate}[(a)]
\item a closed conic neighborhood $V\subseteq \F^n\setminus 0$ of $\Gamma_{y_0}$ such that 
\[
f'(x_0)^t\eta\ne 0 \qquad (\eta \in V)
\]
(this is possible because $N_f\cap\Gamma=\emptyset$),
\item an open compact neighborhood $Y_0$ of $y_0$ such that $V$ is a neighborhood of $\Gamma_y$ for every $y\in Y_0$ (this is possible because $\Gamma$ is closed) and
\item an open compact neighborhood $X_0$ of $x_0$ with $f(X_0)\subseteq Y_0$ and 
\[
f'(x)^t\eta\ne 0 \qquad (x\in X_0\,,\ \eta \in V)\,.
\]
\end{enumerate}
Choose $\psi\in \Ss(Y_0)$ equal to $1$ on $f(X_0)$. Then \eqref{Theorem 8.2.5} implies that
\begin{equation}\label{Theorem 8.2.5.1}
f^*u(\phi)=\int_{\F^n} \cF (\psi u)(\eta)\,I_\phi(\eta)\,d\eta
\qquad (\phi \in \Ss(X_0)\,,\ u\in C^\infty(Y))\,.
\end{equation}
For a fixed $\eta\in V$ consider the function
\[
f_\eta:X_0\ni x\to f(x)\cdot\eta=f(x)^t\eta\in \F\,.
\]
Then $f_\eta$ is an analytic function with $f_\eta'(x)=f'(x)^t\eta$. Hence,
\[
\delta_\eta=\min_{x\in X_0}|f_\eta'(x)|>0\,. 
\]
Therefore $f_\eta$ satisfies the condition of Lemma \ref{Theorem 7.7.1Hormander}. Hence
\[
\int_{X_0}\chi(\lambda f(x)\cdot \eta)\phi(x)\,dx=0 \qquad(\lambda \in \F^\times\,,\ |\lambda|>\max\{q\delta_\eta^{-2}B_\eta, q^{m_0+1}, \delta_\eta^{-1}q^{m_0}\})\,,
\]
where $m_0$ is minimal such that $X_0$ is the disjoint union of balls of radius $q^{-m_0}$ on each of  which $\phi$ is constant. The parameters $\delta_\eta$ and $B_\eta$ depend continuously on $\eta$. Hence, if $C=\max_{\eta\in V, |\eta|=1} \max\{q\delta_\eta^{-2}B_\eta, q^{m_0+1}, \delta_\eta^{-1}q^{m_0}\}$, then
\[
\int_{X_0}\chi(\lambda f(x)\cdot \eta)\phi(x)\,dx=0 \qquad(\lambda \in \F^\times\,,\ |\lambda|>C,\ \eta\in V,\ |\eta|=1)\,.
\]
Therefore,
\begin{equation}\label{Theorem 8.2.6}
\supp I_\phi \cap V\\ \ \text{is bounded for any $\phi\in\Ss(X_0)$}\,.
\end{equation}
Fix $\phi\in\Ss(X_0)$ and let $u_j\in C^\infty(Y)$ converge to $u$ in $\Ss^*_\Gamma(X)$.
By combining \eqref{Theorem 8.2.6} with Lemma \ref{Theorem218Hormander} we see that
\begin{equation}\label{Theorem 8.2.6.1}
\underset{j\to\infty}{\lim}\int_{V} \cF (\psi u_j)(\eta)\,I_\phi(\eta)\,d\eta
=\int_{V} \cF (\psi u)(\eta)\,I_\phi(\eta)\,d\eta\,.
\end{equation}
Let $V^c\subseteq \F^n\setminus 0$ be the complement of $V$. Then
\[
(Y_0\times V^c)\cap \Gamma =\emptyset\,.
\]
Since $u_j$ converge in $\Ss^*_\Gamma(X)$, the set
\begin{equation}\label{Theorem 8.2.6.1.1}
\bigcup \supp \cF (\psi u_j)\cap V^c
\end{equation}
is bounded. Also, by Lemma \ref{Theorem218Hormander} the functions $\cF (\psi u_j)$ are uniformly bounded on this set. Therefore
\begin{equation}\label{Theorem 8.2.6.2}
\underset{j\to\infty}{\lim}\int_{V^c} \cF (\psi u_j)(\eta)\,I_\phi(\eta)\,d\eta
=\int_{V^c} \cF (\psi u)(\eta)\,I_\phi(\eta)\,d\eta\,.
\end{equation}
We conclude from \eqref{Theorem 8.2.6.1} and \eqref{Theorem 8.2.6.2} that 
\begin{equation}\label{Theorem 8.2.6.3}
\underset{j\to\infty}{\lim}\int_{\F^n} \cF (\psi u_j)(\eta)\,I_\phi(\eta)\,d\eta
=\int_{\F^n} \cF (\psi u)(\eta)\,I_\phi(\eta)\,d\eta\,.
\end{equation}
Thus by \eqref{Theorem 8.2.5} and partition of unity, the sequence $f^*u_j$ converges to a limit in $\Ss^*(X)$ independent of the sequence chosen. We denote this limit by $f^*u\in \Ss^*(X)$:
\begin{equation}\label{Theorem 8.2.6.4}
\underset{j\to\infty}{\lim}f^*u_j(\phi)= f^*u(\phi) \qquad (\phi\in\Ss(X))\,.
\end{equation}
Now we'll show that $\underset{j\to\infty}{\lim}f^*u_j= f^*u$ in $\Ss_\Gamma^*(X)$. We keep the notation developed between \eqref{Theorem 8.2.5} and \eqref{Theorem 8.2.6.3} with $\phi\in\Ss(X_0)$. Replacing 
$\phi$ by $\chi(-\xi\cdot x)\phi(x)$ and $u$ by $u_j$ in \eqref{Theorem 8.2.5.1} gives
\begin{eqnarray}\label{Theorem 8.2.5.0}
\cF(\phi f^*u_j)(\xi)&=&\int_X \psi(f(x))u_j(f(x))\chi(-\xi\cdot x)\phi(x)\,dx\nn\\
&=&\int_X \int_{\F^n} \cF (\psi u_j)(\eta) \chi(f(x)\cdot\eta-x\cdot \xi))\,d\eta\,\phi(x)\,dx\nn\\
&=&\int_{\F^n} \cF (\psi u_j)(\eta)\,I_\phi(\eta, \xi)\,d\eta\,,\nn\\
I_\phi(\eta, \xi)&=&\int_X \chi(f(x)\cdot\eta-x\cdot \xi)) \phi(x) \,dx\,.
\end{eqnarray}
Let $W\subseteq \F^m\setminus 0$ be an open conic neighborhood of 
$f'(x_0)^t\Gamma_{y_0}=(f^*\Gamma)_{x_0}$. We may assume that $V$ and $X_0$ are chosen so that
\begin{equation}\label{Theorem 8.2.5.0.0}
f'(x)^t\eta\in W \qquad(x\in \X_0\,,\ \eta\in V) \,.
\end{equation}
For a fixed $\eta\in V$ and $\xi\notin W$ consider the function
\[
f_{\eta,\xi}:X_0\ni x\to f(x)\cdot\eta-x\cdot \xi\in \F\,.
\]
Then $f_{\eta,\xi}$ is an analytic function with $f_{\eta,\xi}'(x)=f'(x)^t\eta-\xi$. Since $X_0$ is compact and  \eqref{Theorem 8.2.5.0.0} implies that 
\[
\delta_{\eta,\xi}=\min_{x\in X_0}|f_{\eta,\xi}'(x)|>0\,. 
\]
Therefore $f_{\eta,\xi}$ satisfies the condition of Lemma \ref{Theorem 7.7.1Hormander}. Hence
\[
\int_{X_0}\chi(\lambda (f(x)\cdot \eta-x\cdot \xi))\phi(x)\,dx=0 \qquad(\lambda \in \F^\times\,,\ 
|\lambda|>\max\{q\delta_{\eta,\xi}^{-2}B_{\eta,\xi}, q^{m_0+1}, \delta_{\eta,\xi}^{-1}q^{m_0}\})\,,
\]
where $m_0$ is minimal such that $X_0$ is the disjoint union of balls of radius $q^{-m_0}$ on each of  which $\phi$ is constant. The parameters $\delta_{\eta,\xi}$ and $B_{\eta,\xi}$ depend continuously on $\eta$ and $\xi$. Hence, if $C=\max_{\eta\in V,\ \xi\notin W,\ |\eta|=1,\ |\xi|=1} \max\{q
\delta_{\eta,\xi}^{-2}B_{\eta, \xi}, q^{m_0+1}, \delta_{\eta,\xi}^{-1}q^{m_0}\}$, then
\[
\int_{X_0}\chi(\lambda (f(x)\cdot \eta-x\cdot \xi))\phi(x)\,dx=0 \qquad(\lambda \in \F^\times\,,\ |\lambda|>C,\ \eta\in V,\ \xi\notin W,\ |\eta|=1,\ |\xi|=1)\,.
\]
Therefore,
\begin{equation}\label{Theorem 8.2.6'}
\supp I_\phi \cap (V\times W^c)\ \ \ \text{is bounded for any $\phi\in\Ss(X_0)$}\,.
\end{equation}
Let 
\[
I_{\phi, \eta}(\xi)=I_\phi(\eta, \xi)\,.
\]
By combining \eqref{Theorem 8.2.6'} with \eqref{Theorem 8.2.6.1.1} we see that
\begin{equation}\label{Theorem 8.2.6'''}
\left(\bigcup _{j}\supp \int_{V} \cF(\psi u_j)(\eta) I_{\phi, \eta}\,d\eta \right)\cap W^c\ \ \ \text{is bounded for any $\phi\in\Ss(X_0)$}\,.
\end{equation}
Let us look at the integral $I_\phi(\eta, \xi)$ as follows
\[
I_\phi(\eta, \xi)=\int_X \chi(-x\cdot \xi) \big(\chi(f(x)\cdot\eta) \phi(x)\big) \,dx\,,
\]
so that $\chi(f(x)\cdot\eta) \phi(x)$ is the test function. 
Since, for $\eta$ in a compact set, the functions $\chi(f(x)\cdot\eta) \phi(x)$ are uniformly locally constant,
Lemma \ref{Theorem 7.7.1Hormander} implies that
\[
\bigcup_{\eta\ \text{in a comact set}}\supp I_{\phi, \eta}
\]
is bounded. By combining this with \eqref{Theorem 8.2.6.1.1} we see that
\begin{equation}\label{Theorem 8.2.6''}
\bigcup _{j}\supp \int_{V^c} \cF(\psi u_j)(\eta) I_{\phi, \eta}\,d\eta \ \ \ \text{is bounded for any $\phi\in\Ss(X_0)$}\,.
\end{equation}
The statements \eqref{Theorem 8.2.6'''} and \eqref{Theorem 8.2.6''} imply
\[
\left(\bigcup _{j}\supp \int_{F^n} \cF(\psi u_j)(\eta) I_{\phi, \eta}\,d\eta \right)\cap W^c\ \ \ \text{is bounded for any $\phi\in\Ss(X_0)$}\,.
\]
But as we see from \eqref{Theorem 8.2.5.0}, this means that
\begin{equation}\label{Theorem 8.2.6''''}
\left(\bigcup _{j}\supp \cF(\phi f^* u_j)\right)\cap W^c\ \ \ \text{is bounded for any $\phi\in\Ss(X_0)$}\,.
\end{equation}
By partition of unity, this shows that $f^*u_j$ converges to $f^*u$ in $\Ss_{f^*\Gamma}(X)$.  Hence the map
\[
f^*:\Ss_\Gamma(Y)\to \Ss_{f^*\Gamma}(X)
\]
is well defined and continuous.
\end{proof}
This theorem shows that it makes sense to define the wave front set $\WF(u)\subseteq T^*X\setminus 0$ for any distribution $u\in C_c^\infty(X)^*$ on an analytic manifold $X$ by using local charts. See  \cite[page 265]{Hormander} and \cite[page 290]{Heifetz}. In particular Theorem \ref{Theorem 8.1.5Hormander} may be rewritten as

\begin{thm}\label{Theorem 8.1.5Hormander1}
Let $X$ be an analytic manifold and let $Y \subseteq X$ be a submanifold equipped with a measure $\mu$ supported on $Y$ equal to a multiple by a non-negative function in any coordinate patch of $Y$ to the standard Haar measure. We view $\mu$ as a distribution on~$X$.\footnote{ More precisely, we define $\mu(\psi) = \mu(\psi|_Y)$ for $\psi \in C_c^\infty(X)$.} Then
\[
\WF(\mu)=T^*_{{\mathrm{supp}} \, \mu} X\,,
\]
where the right hand side denotes the bundle conormal of $Y$ in $X$ supported on $\supp \mu$.
\end{thm}
This is example 8.2.5 in \cite{Hormander}.

\biblio
\end{document}